\newcommand{\bZ}{{\mathbf Z}}
\newcommand{\bi}{\mathbf{i}}
\newcommand{\bj}{\mathbf{j}}
\newcommand{\bt}{\mathbf{t}}
\newcommand{\cO}{\mathcal{O}}
\newcommand{\cN}{\mathcal N}
\newcommand{\cH}{\mathcal{H}}
\newcommand{\cG}{\mathcal{G}}
\newcommand{\cL}{\mathcal{L}}
\renewcommand{\L}{\mathcal{L}}
\newcommand{\cM}{\mathcal{M}}
\newcommand{\ra}{\rightarrow}
\newcommand{\xra}[1]{\xrightarrow{#1}}
\newcommand{\hra}{\hookrightarrow}
\newcommand{\vphi}{\varphi}
\newcommand{\Zhat}{\widehat{\Z}}
\newcommand{\comment}[1]{}
\newcommand{\Q}{\mathbf{Q}}
\newcommand{\C}{\mathbf{C}}
\renewcommand{\P}{\mathbf{P}}
\newcommand{\Z}{\mathbf{Z}}
\newcommand{\F}{\mathbf{F}}
\renewcommand{\O}{\mathcal{O}}
\newcommand{\isom}{\cong}
\newcommand{\Akbar}{A_{\overline{k}}}
\DeclareMathOperator{\chainadd}{\texttt{chain\_add}}
\DeclareMathOperator{\chainmult}{\texttt{chain\_mult}}
\DeclareMathOperator{\chainmultadd}{\texttt{chain\_multadd}}
\DeclareMathOperator{\RM}{RM}
\DeclareMathOperator{\Id}{Id}
\DeclareMathOperator{\NS}{\mathbf{NS}}
\DeclareMathOperator{\Pic}{Pic}
\DeclareMathOperator{\Aut}{Aut}
\DeclareMathOperator{\GL}{\mathbf{GL}}
\DeclareMathOperator{\Sp}{\mathbf{Sp}}
\DeclareMathOperator{\PGL}{\mathbf{PGL}}
\DeclareMathOperator{\Gal}{Gal}
\DeclareMathOperator{\diag}{diag}
\DeclareMathOperator{\Hom}{Hom}
\DeclareMathOperator{\End}{End}
\DeclareMathOperator{\Jac}{Jac}
\theoremstyle{plain} 
\newtheorem{thm}{Theorem}[section] 
\newtheorem{prop}[thm]{Proposition}
\newtheorem{cor}[thm]{Corollary}
\newtheorem{lem}[thm]{Lemma}
\theoremstyle{definition} 
\newtheorem{defn}[thm]{Definition} 
\theoremstyle{remark} 
\newtheorem{rem}{Remark}
\newtheorem{notation}{Notation}
\newcommand{\leftexp}[2]{{\vphantom{#2}}^{#1}{#2}}
\newcounter{tasknumber}
\newcommand{\task}[2][]{%
  \addtocounter{tasknumber}{1}%
  \begin{center}%
  \framebox[1.1\width]{\begin{minipage}{0.9\textwidth}%
  \textbf{Task \arabic{tasknumber}} \textit{\if!#1(unassigned)!\else (#1)\fi}: {#2}%
  \end{minipage}}%
  \end{center}%
}
\newcounter{assumptionnumber}
\newcommand{\assumption}[2][]{%
  \addtocounter{assumptionnumber}{1}%
  \begin{center}%
  \framebox[1.1\width]{\begin{minipage}{0.9\textwidth}%
  \textbf{Assumption \arabic{assumptionnumber}} \textit{\if!#1!\else (#1)\fi}: {#2}%
  \end{minipage}}%
  \end{center}%
}
\newcommand{\authnote}[2][]{\noindent {\if!#1!  {\bf TODO} \else {\small \bf #1} \fi: #2} \vspace{0.1in}}
\title[]{Cyclic Isogenies for Abelian Varieties with Real Multiplication}
\author{Alina Dudeanu}
\email{alina.dudeanu@epfl.ch}
\address{Ecole Polytechnique F\'ed\'erale de Lausanne, Switzerland}
\author{Dimitar Jetchev}
\email{dimitar.jetchev@epfl.ch}
\address{Ecole Polytechnique F\'ed\'erale de Lausanne, Switzerland}
\author{Damien Robert}
\email{damien.robert@inria.fr}
\address{Universit\'e de Bordeaux, France}
\author{Marius Vuille}
\email{marius.vuille@epfl.ch}
\address{Ecole Polytechnique F\'ed\'erale de Lausanne, Switzerland}
\begin{document}

\begin{abstract}
We study quotients of principally polarized abelian varieties with real multiplication by finite Galois-stable subgroups and describe when these quotients are principally polarizable. We use this characterization to provide an algorithm to compute explicit cyclic isogenies from kernel for ordinary and simple abelian varieties over finite fields. Our algorithm is polynomial in the number of binary digits of the finite field as well as in the degree of the isogeny and is based on Mumford's theory of theta functions. Recently, the algorithm has been successfully applied to obtain new results on the discrete logarithm problem in genus~2 as well as to study the discrete logarithm problem in genus 3. 
\end{abstract}

\maketitle


%
%
\section{Introduction}

\subsection{Motivation}
Let $f \colon A \ra B$ be a separable isogeny of ordinary and absolutely simple abelian varieties of dimension 
$g \geq 1$ over a finite field $k$ and suppose that $G = \ker (f)$ is a cyclic subgroup defined over $k$ (as a group scheme, not necessarily pointwise). 
The endomorphism algebras $\End^0(A) = \End(A) \otimes_{\Z} \Q$ and $\End^0(B) = \End(B) \otimes_{\Z} \Q$ are isomorphic to a CM field $K$, that is, a totally imaginary quadratic extension of a totally real number field $K_0 / \Q$ with $[K_0 : \Q] = g$.
Suppose that $A$ is equipped with a principal polarization $\lambda_A$ (an algebraic equivalence class of ample line bundles of degree 1 on~$A$). The polarization isogeny $\lambda_A \colon A \ra A^\vee$ (where $A^\vee$ is the dual abelian variety of $A$) determines a Rosati involution $\dagger \colon \End(A) \ra \End(A)$ defined by $\vphi \mapsto \vphi^\dagger = \lambda_A^{-1} \circ \vphi^\vee \circ \lambda_A$. Under any choice of isomorphism $\End^0(A) \xrightarrow{\sim} K$, the complex conjugation on $K$ corresponds to the Rosati involution on~$\End^0(A)$. 

In this paper, we study when the target abelian variety $B$ is principally polarizable and if so, whether one can explicitly compute $B$ as well as evaluate the isogeny $f$ on points. The latter needs a clarification as it is not even clear how the abelian varieties are represented. If $g = 1$ (elliptic curves) then it amounts to computing a Weierstrass equation for $B$ and providing an algorithm to compute images of points under $f$ \cite{velu}. The problem is much more challenging for $g > 1$. One way to represent a principally polarized abelian variety which is a Jacobian of an algebraic curve is via degree-zero divisor classes. This assumes that one knows an explicit model of the curve which need not 
always be the case. Even worse, there is no reason that the non-polarized abelian variety~$B = A / G$ even admits a principal polarization, so working with linear equivalence classes of divisors of degree zero is 
not a suitable option. 

A different approach is via Mumford's theory of projective embeddings of abelian varieties via theta functions. The latter allows us to compute explicit coordinates for $B$ on a certain projective model of the moduli space of principally polarized abelian varieties out of the data for $A$ and $G$. In a series of papers \cite{mumford:eq1, mumford:eq2, mumford:eq3} (see also \cite{mumford1984}), Mumford defines a group associated to an invertible sheaf on an abelian variety (Mumford's theta group). This group has a natural action on the space of global sections of the sheaf, thus yielding a natural linear representation. As Mumford's theta group is abstractly non-canonically isomorphic to a finite analogue of the Heisenberg group, a finite analogue of Stone--von Neuman's theorem then yields that such a representation is unique. Fixing a choice of such an isomorphism between the Heisenberg group and Mumford's theta group yields a system of canonical projective coordinates for the abelian variety.    
It is these coordinates for the target abelian variety that one may try to compute 
explicitly.

\subsection{Main results}
We first provide a criterion for the target abelian variety $B = A / G$ to admit a principal polarization. Let $\End(A)^+ \subset \End(A)$ be the subset of symmetric endomorphisms of $A$, that is, endomorphisms $\varphi \in \End(A)$ for which $\varphi^{\dagger} = \varphi$. Furthermore, let $\End(A)^{++} \subset \End(A)^+$ be the subset of totally positive symmetric endomorphisms (i.e. endomorphisms $\varphi \in \End(A)^+$ that correspond to totally positive elements of $\cO_{K_0}$ under the isomorphism $\End^0(A) \xrightarrow{\sim} K$). If $\cL$ is any ample line bundle on~$A$ (which by abuse of notation we will henceforth also call a polarization) and $\beta \in \End(A)^{++}$, one defines a polarization via the composition 
$\varphi_{\cL} \circ \beta \colon A \ra A^\vee$. We will show in Section \ref{subsec:principal-polarization} that this composition indeed arises as the polarization isogeny of an ample line bundle on $A$, that we denote by~$\cL^\beta$, unique up to algebraic equivalence. 

Let $K(\cL^\beta)$ be the kernel of the polarization isogeny $\varphi_{\cL^\beta} \colon A \ra A^{\vee}$. Associated to the polarization~$\cL^\beta$ is a nondegenerate, alternating bilinear pairing (the commutator pairing) $$e_{\cL^\beta} \colon K(\cL^\beta) \times K(\cL^\beta) \ra \bar{k}^\times$$ (see Section~\ref{par:thetastruct} for the 
precise definition). 

\begin{thm}[(Principal polarizability of $B$)]\label{thm:princ_pol_of_B}
Let $\cL_0$ be a principal polarization on $A$. The a priori non-polarized abelian variety $B = A / G$ admits a principal polarization $\cM_0$ if and only if there exists a totally positive real endomorphism $\beta \in \End(A)^{++}$ such that $G \subset \ker (\beta) = K(\cL_0^\beta)$ and $G$ is a maximal isotropic subgroup for the commutator pairing $e_{\cL_0^\beta}$. 
\end{thm}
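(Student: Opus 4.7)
The plan is to establish each direction of the equivalence by reducing to the standard theory of descent of polarizations through isogenies with isotropic kernel.

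For the implication $(\Leftarrow)$, suppose $\beta \in \End(A)^{++}$ is given with $G \subset K(\cL_0^\beta)$ maximal isotropic for $e_{\cL_0^\beta}$. Isotropy of $G$ in $K(\cL_0^\beta)$ is precisely the condition under which the polarization $\cL_0^\beta$ admits a descent through the quotient isogeny $\pi \colon A \to B = A/G$, producing an ample line bundle $\cM_0$ on $B$ with $\pi^*\cM_0 \simeq \cL_0^\beta$. Its polarization isogeny has kernel $G^\perp/G$, where $G^\perp$ denotes the orthogonal complement of $G$ in $K(\cL_0^\beta)$ under $e_{\cL_0^\beta}$. Maximal isotropy of $G$ forces $G^\perp = G$, so $K(\cM_0)$ is trivial and $\cM_0$ is a principal polarization on $B$.

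For the converse $(\Rightarrow)$, let $\cM_0$ be a principal polarization on $B$ and set $\cL := \pi^*\cM_0$, an ample line bundle on $A$ whose polarization isogeny satisfies $\varphi_\cL = \hat\pi \circ \varphi_{\cM_0} \circ \pi$. Since $\cL_0$ is principal, $\varphi_{\cL_0}$ is an isomorphism, so I define
$$\beta := \varphi_{\cL_0}^{-1} \circ \varphi_\cL \in \End(A).$$
This choice satisfies $\varphi_{\cL_0^\beta} = \varphi_{\cL_0}\circ\beta = \varphi_\cL$ and hence $K(\cL_0^\beta) = K(\cL)$. Symmetry $\beta^\dagger = \beta$ is a direct diagram chase using the self-duality of the polarization morphisms $\varphi_\cL$ and $\varphi_{\cL_0}$. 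Total positivity of $\beta$ is a classical consequence of the ampleness of $\cL$, verified on the $\ell$-adic Tate module for any prime $\ell$ distinct from the residue characteristic. The inclusion $G \subset K(\cL_0^\beta)$ and the isotropy of $G$ for $e_{\cL_0^\beta}$ both follow from $\cL = \pi^*\cM_0$ by the ``only if'' half of the descent criterion, while maximality follows from the order count $|K(\cL)| = (\deg \pi)^2 \cdot |K(\cM_0)| = |G|^2$, which forces $G^\perp = G$.

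The main obstacle I anticipate is establishing total positivity of $\beta = \varphi_{\cL_0}^{-1}\circ\varphi_\cL$ uniformly in the characteristic: in characteristic zero it is Riemann's positivity of the Hermitian form attached to $\cL$, whereas in positive characteristic one invokes the $\ell$-adic analogue, using the characteristic-polynomial formulation built into the definition of $\End(A)^{++}$ recalled just before the theorem. The rest of the argument is essentially a translation between two equivalent packages of data: polarizations on $A$ twisted by totally positive symmetric endomorphisms on one side, and descended principal polarizations on the quotient on the other.
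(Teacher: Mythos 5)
Your proof is correct and follows essentially the same route as the paper's: in both directions you translate between the twisted polarization $\cL_0^\beta$ and the descended polarization $\cM_0$ via the descent criterion for isotropic kernels, define $\beta$ via $\varphi_{\cL_0}^{-1}\circ\varphi_{f^*\cM_0}$ (which is what the paper's invocation of Proposition~\ref{prop:ppav} amounts to), and settle principality by a degree/order count. The only cosmetic difference is that you make the $K(\cM_0)=G^\perp/G$ identity explicit where the paper simply says ``for degree reasons,'' and conversely the paper frames the degree argument as $\deg\beta=(\deg f)^2$ rather than $|K(\cL)|=|G|^2$; these are the same computation.
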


\begin{defn}
Let $(A, \cL)$ and $(B, \cM)$ be polarized abelian varieties. Given a totally positive real endomorphism $\beta \in \End(A)^{++}$, an isogeny $f \colon A \to B$ is called a \textit{$\beta$-cyclic isogeny} if $\ker(f)$ is a cyclic subgroup of $\ker(\beta)$ and if $f^* \cM$ is algebraically equivalent to $\cL^\beta$. To indicate the corresponding 
polarizations, we often denote the isogeny by $f \colon (A, \cL^\beta) \to (B, \cM)$
\end{defn}

\noindent Let $f \colon (A, \cL^\beta) \ra (B, \cM)$ be a $\beta$-cyclic isogeny for some $\beta \in \End(A)^{++}$, and suppose that $\cL$ and $\cM$ are very ample. Given a projective embedding $A \hra \mathbf{P}(\Gamma(A, \cL))$, we will be interested in computing a projective embedding $B \hra \mathbf{P}(\Gamma(B, \cM))$. Yet, the data of an abelian variety together with a very ample line bundle does not 
determine a canonical projective embedding in the space of global sections. In addition, one needs to make precise the notion of projective embeddings being compatible under isogeny. 
Mumford \cite{mumford:eq1} resolved these questions by introducing the notion of theta structures (see Section~\ref{sec:theta} for details). Fixing a theta structure $\Theta_{\cL}$ on a polarized abelian variety~$(A, \cL)$ yields canonical projective coordinates for the variety. In addition, one can descend the data of a polarized abelian variety with theta structure under isogeny (see Theorem~\ref{thm:isog}). Our main theorem provides an algorithm that computes the theta coordinates of the image $f(x)$ of a geometric point $x$ of $A$ with respect to a suitable theta structure on $(B, \cM)$ from the theta coordinates of $x$ with respect to~$\Theta_\cL$. The \textit{theta null point} of~$(A, \cL)$ for $\Theta_\cL$ is the image of $0_A$ under the projective embedding of $A$ induced by $\Theta_\cL$.

\begin{thm}[(Computing $\beta$-cyclic isogenies)]\label{thm:main}
Let $(A, \cL_0)$ be a principally polarized ordinary and simple abelian variety of dimension $g$ over the finite field $k$ of cardinality $q = \#k$. Let $n = 2$ or $n = 4$. Suppose that $\cL_0$ is a symmetric line bundle and let $\Theta_{\cL}$ be a symmetric theta structure for the polarization~$\cL = \cL_0^{\otimes n}$ (see Section \ref{par:sym_theta_structures} for the definitions). Let $K$ be the CM field that is the endomorphism algebra of $A$ and let $K_0 \subset K$ be the totally real subfield. Let $\ell$ be an odd prime number different from the characteristic of~$k$ and let $G \subset A(\bar{k})$ be a 
$\pi$-stable cyclic subgroup of order $\ell$ where $\pi \colon A \ra A$ is the Frobenius endomorphism. Assume the following hypotheses:

\begin{enumerate}
\item[H.1] The prime $\ell$ is either split or ramified in $\cO_{K_0}$ and one of the prime ideals of $\cO_{K_0}$ above $\ell$ is principal and generated by a totally positive element $\beta \in \cO_{K_0}$ of norm $\ell$ such that $G \subset A[\beta]$.


\item[H.2] The abelian variety $A$ has maximal local real endomorphism ring at $\ell$, i.e. $\End(A)^+ \otimes_{\Z} \Z_\ell$ is isomorphic to $\cO_{K_0} \otimes_{\Z} \Z_\ell$. 

\item[H.3] The conductor gap $[\cO_{K_0} \colon \Z[\pi + \pi^\dagger]]$ is coprime to $2\ell$. 

\item[H.4] There exists an algorithm $\RM(\alpha, y)$ that computes the action of a real multiplication (RM) endomorphism $\alpha \in \End(A)^+$ on a 4-torsion point $y \in A[4]$. 
\end{enumerate}
Fixing $\beta$ as in H.1 and denoting by $\cM_0$ the principal polarization on $B = A / G$ from Theorem \ref{thm:princ_pol_of_B}, and $\cM = \cM_0^{\otimes n}$, we have:

\vspace{0.1in}

\noindent (i) Given the theta null point of $(A, \cL)$ for $\Theta_{\cL}$ and the theta coordinates of a generator of $G$ with respect to $\Theta_\cL$, there exists an algorithm polynomial in $\log q$ and $\ell$ that computes the theta null point of $(B, \cM)$ for some theta structure $\Theta_{\cM}$. 

\vspace{0.1in}

\noindent (ii) Given a point $x \in A(k)$ of order coprime to $\ell$ in theta coordinates for $\Theta_\cL$, there exists an algorithm that computes the theta coordinates of $f(x)$ with respect to $\Theta_{\cM}$ in time polynomial in $\log q$ and $\ell$ where $\Theta_{\cM}$ is the theta structure from (i). 
\end{thm}

\begin{rem}
If $k$ is an arbitrary field of positive characteristic, Theorem~\ref{thm:main} (i) provides an algorithm to compute the theta null point of $(B, \cM)$ for some theta structure $\Theta_{\cM}$, and Theorem~\ref{thm:main} (ii) provides an algorithm to evaluate a cyclic isogeny on points $x$ whose order is finite and coprime to the degree $\ell$, provided $\End^+(A)$ acts on $x$ by scalar multiplication.
\end{rem}

\begin{rem}
In genus 2 and 3, knowing the theta null point for $(B, \cM, \Theta_{\cM})$ is enough to determine the equation of the underlying curve $C'$ of $B = \Jac(C')$. For the case of a hyperelliptic Jacobian, see~\cite{CossetRobert} and for the case of a Jacobian of a smooth plane quartic, see \cite{fiorentino}. 
\end{rem}

\begin{rem}
For all applications that we will consider, hypothesis H.4 is not too restrictive. In the case where $A$ is the Jacobian variety of a hyperelliptic curve, one can use Mumford's representation to compute this action (e.g., in this case, we know that the 2-torsion points are coming from the Weierstrass points on the curve). For $g = 2$, a more general method for $\RM$ is based on computing $\sqrt{D_0}$ as a $(D_0, D_0)$-isogeny (using the method of \cite{CossetRobert}) where $D_0$ is the discriminant of the real quadratic field.
\end{rem}

\begin{rem}
A natural question is to what extent is the theta structure $\Theta_{\cM}$ uniquely determined by 
the theta structure $\Theta_{\cL}$. As we will discuss in more detail in Section~\ref{sec:computing_theta_null_point}, this theta structure need not be unique. 
\end{rem}

\subsection{Known results and applications of the main theorem}

Computing isogenies in higher dimensions has been of considerable interest. The idea of using theta coordinates to compute isogenies from kernel is certainly not new and already appears in several prior works \cite{drobert:thesis}, \cite{CossetRobert}, \cite{lubicz-robert:isogenies}, 
\cite{couveignes-ezome}. Other more geometric methods have been considered as well \cite{richelot:37}, 
\cite{dolgachev-lehavi}, \cite{smith:genus3}, \cite{bruin-flynn-testa} and \cite{flynn:55}. 

Yet, in all of these works, the computed isogenies are special in the sense that the principal polarizability of the target variety does not depend on the existence of totally positive real endomorphisms of prime $K_0/ \Q$-norm, but is linked to multiplication by $\ell$, hence is always guaranteed.
In the case of cyclic isogenies, the target abelian variety need not be principally polarizable, so the problem requires much more careful analysis and novel ideas. 

Computing explicit isogenies have been fundamentally important in both computational number theory and 
mathematical cryptology. The basic point-counting algorithm of Schoof--Elkies--Atkin (SEA) relies heavily on isogeny computations (see \cite{fouquet-morain}). Kohel's algorithm for computing the endomorphism ring of an elliptic curve over a finite field \cite{kohel:thesis} as well as the analogous algorithm for genus 2 due to Bisson \cite{bisson} critically used isogenies. On a more practical cryptographic level, computing isogenies from kernel was the key idea behind the post-quantum cryptographic system proposed by de Feo, Jao and Plut \cite{defeo-jao-plut}. Isogenies received considerable attention in computing Hilbert class polynomials and the CM method \cite{lauter-robert}, modular polynomials in genus 2 \cite{milio:hal1} and \cite{milio:hal2} as well as pairings \cite{lubicz-robert:pairings}. 

Theorem~\ref{thm:main} has already been applied to prove a worst case to average case reduction in isogeny classes for the discrete logarithm problem for Jacobians of curves of genus 2 \cite{jetchev-wesolowski}. In addition, it enabled improvements to existing \emph{going-up} algorithms that appear crucial in the CM method in genus 2 for computing Hilbert class polynomials \cite{brooks-jetchev-wesolowski}. 

It is expected that Theorem~\ref{thm:main} can be used to efficiently reduce the discrete logarithm problem in genus 3 from the Jacobian of a hyperelliptic curve to the Jacobian of a quartic curve, thus generalizing the arguments of Smith \cite{smith:genus3}. Since the 
problem on Jacobians of quartic curves is known to be solvable faster than the generic Pollard's rho method, this could allow us to show that the problem is also easier on Jacobians of hyperelliptic curves of genus 3. 

\subsection{Overview and organization of the paper}
We establish the criterion for polarizability of the target abelian variety in Section~\ref{sec:polarizability} (see Proposition~\ref{lem:princpolB}). Section~\ref{sec:theta} reviews basic notions from Mumford's theta theory such as theta groups, theta structures, symmetric and totally symmetric line bundles as well as the main tool for transferring data under isogeny - the isogeny theorem (Theorem~\ref{thm:isog}). We apply 
the theory in Section~\ref{sec:computing_theta_null_point} to first compute the theta null point of the 
target variety for a suitably chosen totally symmetric line bundle and symmetric theta structure on the target variety. This section requires several novel ideas: first of all, in order to apply the isogeny theorem, one needs to work on an $r$-fold product $B^r$ of the target variety $B$ for some suitable $r >1$. One can thus only get the theta coordinates for some theta structure on $B^r$ that is not an $r$-fold product theta structure (i.e., that comes from a single copy of $B$). To remedy this, in Section~\ref{subsec:metaplect} we apply a metaplectic isomorphism to modify appropriately the theta structure and extract theta coordinates for a single copy of $B$. Section~\ref{sec:isogpts} explains how to evaluate the isogeny on points. Section~\ref{sec:complexity} analyzes the complexity of the two algorithms. Finally, we provide an explicit example and discuss our implementation of the algorithm in Section~\ref{sec:examples}.

%
%
\section{Polarizability}\label{sec:polarizability}
Let $A$ be an abelian variety of dimension $g$ over the finite field $k$.

\subsection{Principal polarization on an abelian variety}\label{subsec:principal-polarization}

Write $\Akbar$ for the base change $A \otimes_k \bar{k}$.
The Picard group $\Pic(\Akbar)$ of $\Akbar$ is the group of isomorphism classes of line bundles $\cL$ on $\Akbar$ under the tensor product $\otimes$. Denote by~ $\Pic^0(\Akbar)$ the subgroup of isomorphism classes of degree 0 line bundles. Every line bundle $\cL$ on~$A$ induces a map $\varphi_{\cL} \colon A(\bar{k}) \ra \Pic^0(\Akbar), \, x \mapsto t_x^*\cL \otimes \cL^{-1}$ which is a homomorphism by the theorem of the square. If $\cL$ is ample, then $\varphi_{\cL} \colon A \ra A^\vee$ is an isogeny, where $A^\vee$ is the dual abelian variety, representing isomorphism classes of degree 0 line bundles on $A$. However, not every homomorphism $A \ra A^\vee$ is of the form $\varphi_{\cL}$ for some $\cL \in \Pic(\Akbar)$. For a criterion, see \cite[Prop.16.6]{milne:abvars}.
Recall that two line bundles $\cL_1$ and $\cL_2$ on $A$ are \textit{algebraically equivalent} if there exists $\cL^0 \in \Pic^0(A)$ such that $\cL_2 \isom \cL_1 \otimes \cL^0$. If $\cL_1$ is ample, this is
equivalent to saying that $\cL_2 \isom t_x^* \cL_1$ for some $x \in A(\bar k)$
(indeed~$\cL_1$ is ample, hence $\varphi_{\cL_1} \colon A \to A^\vee$ is an
  isogeny, so the line bundle $\cL^0$ from above can be written as $t_x^*
\cL_1 \otimes \cL_1^{-1}$ for some $x \in A(\bar k)$). Note that the isogeny $\varphi_{\cL} \colon A \ra A^\vee$ depends only on the algebraic equivalence class of $\cL$. The algebraic equivalence classes of line bundles on $A$ form a group under $\otimes$, the N\'eron-Severi group $\NS(A)$. 

A \textit{polarization} $\lambda$ on $A$ is the algebraic equivalence class of an ample line bundle. The polarization is called \textit{principal} if the induced isogeny $A \ra A^\vee$, also denoted by $\lambda$, is an isomorphism. 
If $\lambda$ is a principal polarization then $\lambda$ determines a Rosati involution $\dagger$ on $\End(A)$ defined by  
$\vphi \mapsto \vphi^{\dagger} := \lambda^{-1} \circ \vphi^{\vee} \circ \lambda$. Again consider the subset  
$\End(A)^+ \subset \End(A)$ of endomorphisms of $A$ stable under $\dagger$.
If $\End(A)^+ \otimes \Q$ is a totally real field of degree~$g$ over $\Q$ (the dimension of $A$), we say that $A$
has real multiplication~(RM). This is the case e.g. for an ordinary and simple abelian variety over the finite field $k$.
 
\paragraph{Principal polarizations and totally positive real endomorphisms.}\label{sec:RM}
Suppose that $A$ is ordinary and simple and let $\lambda$ be a principal polarization on $A$. Let $K$ be the CM-field isomorphic to the endomorphism algebra of $A$, with totally real subfield $K_0 \subset K$.
The following result describes all ample line bundles on~$A$ up to algebraic equivalence in terms of totally positive real endomorphisms. 

\begin{prop}\label{prop:ppav}
The map $\cL \mapsto \lambda^{-1} \circ \varphi_{\cL}$ yields an isomorphism of groups $\Phi_{\lambda} \colon \NS(A) \xra{\sim} \End(A)^+$.  
This isomorphism induces a bijection between the polarizations on $A$ and the set $\End(A)^{++} \subset \End(A)^+$ 
of symmetric (for the Rosati involution) totally positive endomorphisms of $A$. Under this bijection, the polarizations of 
degree $d$ correspond to endomorphisms of degree $d^2$. In particular, if $d = 1$ then principal polarizations on $A$ correspond to totally positive symmetric units in $\End(A)$. 
\end{prop}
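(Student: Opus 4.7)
My plan is to check that $\Phi_{\lambda}$ is a well-defined group homomorphism landing in $\End(X)^+$, prove bijectivity via Mumford's criterion for an isogeny $X\to X^{\vee}$ to arise from a line bundle, identify the ample classes with the totally positive endomorphisms, and finally compare the isogeny degree with the analytic norm.

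For well-definedness I would observe that $\cL_X\mapsto\varphi_{\cL_X}$ depends only on the algebraic equivalence class of $\cL_X$ and is additive in $\cL_X$, so $\Phi_{\lambda}$ descends to a group homomorphism $\NS(X)\to\End(X)$; that its image lies in $\End(X)^+$ follows from the canonical identities $\varphi_{\cL_X}^{\vee}=\varphi_{\cL_X}$ (under $X^{\vee\vee}\simeq X$) and $\lambda^{\vee}=\lambda$, giving $(\lambda^{-1}\varphi_{\cL_X})^{\dagger}=\lambda^{-1}\varphi_{\cL_X}^{\vee}(\lambda^{-1})^{\vee}\lambda=\lambda^{-1}\varphi_{\cL_X}$. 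Injectivity is immediate since $\ker(\cL_X\mapsto\varphi_{\cL_X})=\Pic^0(X)$, which is exactly the kernel of $\Pic(X)\to\NS(X)$. For surjectivity I would take $\psi\in\End(X)^+$ and set $\phi:=\lambda\circ\psi$; the relation $\psi^{\dagger}=\psi$ rewrites as $\lambda\psi=\psi^{\vee}\lambda$, hence $\phi^{\vee}=\psi^{\vee}\lambda=\lambda\psi=\phi$, so $e_{\ell}^{\phi}$ is alternating and Milne's Prop.~13.6 produces a line bundle $\cL_X$ with $\varphi_{\cL_X}=\phi$, whence $\Phi_{\lambda}([\cL_X])=\psi$.

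Next I would identify the polarizations among all elements of $\NS(X)$. A class is a polarization precisely when it contains an ample representative. Combining Mumford's index theorem on $T_{\ell}X$ with the fact that $\psi\in\End(X)^+\otimes\Q=K_0$ acts diagonalizably on $T_{\ell}X$ with real eigenvalues (the embeddings of $K_0$ into $\R$, each appearing twice in the $\ell$-adic representation), ampleness of $\cL_X$ becomes exactly positivity of all these eigenvalues, that is, total positivity of $\psi=\Phi_{\lambda}([\cL_X])$ in the sense of the paper's definition. This yields the bijection $\{\text{polarizations}\}\leftrightarrow\End(X)^{++}$.

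For the degree statement, since $\lambda$ is principal we have $\deg\varphi_{\cL_X}=\deg(\lambda\psi)=\deg\psi$. Under the RM hypothesis, $T_{\ell}X$ is free of rank $2$ over $\cO_{K_0}\otimes\Z_{\ell}$, so $\deg\psi=N_{K_0/\Q}(\psi)^2$; on the other hand, the analytic representation $T_0 X$ is $K_0\otimes k$-free of rank $1$, giving analytic norm $N_{K_0/\Q}(\psi)$. With the standard convention that the degree $d$ of a polarization is $\sqrt{\deg\varphi_{\cL}}$, polarizations of degree $d$ correspond to symmetric totally positive endomorphisms of analytic norm $d$, and for $d=1$ one recovers the characterization of principal polarizations as totally positive units in $\End(X)^+$. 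I expect the delicate step to be the ampleness-versus-total-positivity equivalence, which requires a careful matching of the analytic Riemann-form criterion on $T_0 X$ with the $\ell$-adic definition of total positivity adopted in the paper.
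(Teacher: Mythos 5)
Your group-theoretic skeleton is sound: well-definedness and the computation $(\lambda^{-1}\varphi_{\cL_X})^\dagger=\lambda^{-1}\varphi_{\cL_X}$, injectivity from $\ker(\cL_X\mapsto\varphi_{\cL_X})=\Pic^0(X)$, surjectivity via Milne's criterion that a homomorphism $\phi\colon X\to X^\vee$ with $\phi^\vee=\phi$ comes from a line bundle, and the degree bookkeeping $\deg\varphi_{\cL_X}=\deg\psi=N_{K_0/\Q}(\psi)^2$ using that $T_\ell X$ is free of rank $2$ over $\cO_{K_0}\otimes\Z_\ell$. All of these are routine and correct. The genuine gap sits exactly where you flag it: the equivalence between ampleness of $\cL_X$ and total positivity of $\psi=\lambda^{-1}\varphi_{\cL_X}$ in the paper's $\ell$-adic sense. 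You invoke ``Mumford's index theorem on $T_\ell X$,'' but Mumford's index theorem concerns the cohomology groups $H^i(X,\cL_X)$: for a nondegenerate $\cL_X$ there is a unique $i(\cL_X)$ with $H^{i(\cL_X)}\neq 0$, and $\cL_X$ is ample iff $i(\cL_X)=0$. It has no statement on $T_\ell X$, and it does not by itself connect $i(\cL_X)=0$ with positivity of the roots of the characteristic polynomial of $\psi_\ell$. That comparison --- that the index of $\cL_X^\psi$ equals the number of real places of $K_0$ where $\psi$ is negative --- is the entire content of the proposition over a non-archimedean ground field, and your sketch asserts it rather than proves it.

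The paper's own proof takes a different route that avoids this: for $k=\C$ it simply cites Birkenhake--Lange (Prop.\ 5.2.1 and Thm.\ 5.2.4), where ampleness is positive-definiteness of the associated Hermitian form, and RM turns this into total positivity; for an ordinary abelian variety over a finite field it passes to the Serre--Tate canonical lift, embeds the fraction field of the Witt vectors into $\C$, and transfers both ampleness (which is preserved under specialization of a smooth proper family) and the characteristic polynomial of $\psi$ (which is unchanged) back and forth. If you wish to keep your direct approach, you would need to establish the index/signature comparison over $k$ honestly --- and the standard way to do that is, again, the lifting argument, which is why the paper delegates the whole proposition to the complex case.
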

\begin{proof}
We prove the proposition by considering the canonical lift of $A$ to the ring of Witt vectors, embedding the fraction field of the ring of Witt vectors (in any way) to $\C$ and then reducing the problem to a principally polarized complex abelian variety, for which the proof can be found in \cite[Prop.5.2.1 and Thm.5.2.4]{birkenhake-lange}. 
\end{proof}

\begin{cor}
  \label{cor:endo_polarisation}
  If $\alpha$ is an endomorphism of $A$, then $\alpha^\ast \cL$ is
  algebraically equivalent to $\cL^{\alpha^ \dagger\alpha}$.
\end{cor}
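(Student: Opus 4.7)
The plan is to translate the desired algebraic equivalence into an identity in $\End(X)^+$ via the isomorphism $\Phi_\lambda$ of Proposition~\ref{prop:ppav}, which records the algebraic equivalence class of a line bundle by the associated symmetric endomorphism.

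First, I would establish the functoriality identity
\begin{equation*}
\varphi_{\alpha^{\ast} \cL} \,=\, \alpha^\vee \circ \varphi_\cL \circ \alpha \colon X \to X^\vee.
\end{equation*}
This follows from the translation rule $\alpha \circ t_x = t_{\alpha(x)} \circ \alpha$ (which yields $t_x^{\ast} \alpha^{\ast} \cL \cong \alpha^{\ast} t_{\alpha(x)}^{\ast} \cL$), combined with the defining property of $\alpha^\vee$ as the homomorphism induced by $\alpha^{\ast}$ on $\Pic^0(X) = X^\vee$.

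Next, writing $\beta := \Phi_\lambda(\cL) = \lambda^{-1} \circ \varphi_\cL \in \End(X)^+$ and using the defining relation $\lambda \circ \alpha^\dagger = \alpha^\vee \circ \lambda$ of the Rosati involution, I obtain
\begin{equation*}
\Phi_\lambda(\alpha^{\ast} \cL) \,=\, \lambda^{-1} \circ \alpha^\vee \circ \varphi_\cL \circ \alpha \,=\, \alpha^\dagger \circ \beta \circ \alpha.
\end{equation*}
On the other hand, by the very definition of $\cL^\gamma$ as the line bundle with polarization isogeny $\varphi_\cL \circ \gamma$, one has $\Phi_\lambda(\cL^{\alpha^\dagger \alpha}) = \beta \circ \alpha^\dagger \circ \alpha$. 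By the injectivity of $\Phi_\lambda$, it then suffices to verify the identity $\alpha^\dagger \beta \alpha = \beta \alpha^\dagger \alpha$ in $\End(X)$.

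The key---and only non-formal---point is the commutation $\alpha^\dagger \beta = \beta \alpha^\dagger$. This is where the real multiplication hypothesis of the paper enters: by Albert's classification, for an abelian variety of dimension $g = [K_0 : \Q]$ with RM by $K_0$, the totally real field $K_0$ lies in the center of $\End^0(X)$. Since $\beta \in \End(X)^+ \subset K_0$, it commutes with every element of $\End(X)$, in particular with $\alpha^\dagger$. Multiplying this commutation by $\alpha$ on the right gives the desired equality in $\End(X)$ and finishes the proof. The main obstacle I expect is precisely this centrality claim---the statement would fail in a non-RM setting where the Rosati-fixed subring need not be central.
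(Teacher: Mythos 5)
Your computation is correct, and the framework you chose — pass everything through $\Phi_\lambda$ using $\varphi_{\alpha^\ast \cL} = \alpha^\vee \circ \varphi_\cL \circ \alpha$ and the relation $\lambda \circ \alpha^\dagger = \alpha^\vee \circ \lambda$ — is exactly the right one, yielding $\Phi_\lambda(\alpha^\ast\cL) = \alpha^\dagger \beta \alpha$ and $\Phi_\lambda(\cL^{\alpha^\dagger\alpha}) = \beta \alpha^\dagger\alpha$ with $\beta = \Phi_\lambda(\cL)$. Where you overshoot is in declaring the ``key non-formal point'' to be the centrality of $\beta$ in $\End^0(X)$. In every place the paper actually invokes the corollary — $\beta^\ast\cL_0 \sim \cL_0^{\beta^2}$ in the proof of Lemma~\ref{lem:mbeta}, and $F^\ast\cM_0^{\star r} \sim (\cM_0^{\star r})^{F^\dagger F}$ in the proof of Lemma~\ref{lem:F} — the line bundle $\cL$ is a \emph{principal} polarization, so that $\beta = \Phi_\lambda(\cL) = \lambda^{-1}\circ\lambda = \id$ and no commutation is needed at all: both sides of your target identity are literally $\alpha^\dagger\alpha$. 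Read this way, the corollary is an immediate consequence of Proposition~\ref{prop:ppav} with nothing further to check.

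This distinction matters because your centrality argument does not cover the cases the paper needs. In Lemma~\ref{lem:F} the corollary is applied to $X = B^r$, whose endomorphism algebra $\End^0(B^r) \cong M_r(K)$ is noncommutative; the Rosati-fixed part there is the space of Hermitian $r \times r$ matrices over $K$, which is much larger than the central subfield $K_0$ and certainly does not commute with a general $\alpha^\dagger \in M_r(K)$. So for an arbitrary ample $\cL$ on $B^r$ (arbitrary totally positive Hermitian $\beta$), the identity $\alpha^\dagger\beta\alpha = \beta\alpha^\dagger\alpha$ genuinely fails, and the corollary would be false. It is saved in the application only because $\cL = \cM_0^{\star r}$ is the product principal polarization, i.e.\ $\beta = \id$. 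Your closing remark (``the statement would fail in a non-RM setting where the Rosati-fixed subring need not be central'') is pointing at exactly this danger, but the correct resolution is not that the paper restricts to commutative $\End^0$ — it does not, since $B^r$ appears — but that it only ever uses $\cL$ principal, which makes $\beta = \id$ and renders the whole commutation question moot.
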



\begin{rem}\label{rem:totsym}
Proposition~\ref{prop:ppav} is not quite
strong since often we will need to rigidify the line
bundles up to isomorphism (linear equivalence) rather than up to algebraic equivalence.
But this rigidification will actually be automatic, since we
will be working with symmetric theta structures on totally symmetric line bundles (see Section \ref{par:sym_theta_structures}). 
A line bundle is \emph{totally symmetric} if and only if it is the square
of a symmetric line bundle, so there is at most one totally symmetric line
bundle in its algebraic equivalence class.
In particular, if there is an isogeny $f \colon (A,\cL) \to (B,\cM)$ such that~$f^\ast \cM$ is algebraically equivalent to $\cL$ and both $\cL$ and $\cM$
are totally symmetric, then $f^\ast \cM$ (which is totally symmetric) is
linearly equivalent to $\cL$.
\end{rem}

\subsection{Principal polarizability via isogenies}\label{subsec:inducedpp}
Let $f \colon A \ra B$ be an isogeny of abelian varieties whose kernel is $\ker(f) = G \subset A(\bar{k})$. Proposition~\ref{prop:ppav} can be used to decide whether 
$B \isom A / G$ is principally polarizable. Assuming that there exists a principal polarization $\cM_0$ on $B$, we would like to compute the theta null point of $\cM = \cM_0^{\otimes n}$ 
for some~$n$ and for a suitably chosen theta structure on $(B, \cM)$. The exact criterion for deciding principal polarizability is summarized in the 
following lemma.

\begin{prop}
  \label{lem:princpolB}
  Let $(A,\cL_0)$ be a principally polarized ordinary and simple abelian variety defined over the
  field $k$. Let $G \subset A(\bar{k})$ be a finite subgroup, and $f \colon A \to
  B=A/G$ the corresponding separable isogeny. Then $B$ admits a principal
  polarization if and only if there exists a totally positive real endomorphism 
  $\beta \in \End(A)^{++}$ such that $G$ is a maximal isotropic subgroup for
  the commutator pairing $e_{\cL_0^\beta}$.
\end{prop}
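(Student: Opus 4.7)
The plan is to prove both directions by combining Proposition~\ref{prop:ppav} (which identifies $\NS(A)$ with $\End(A)^+$ and polarizations with $\End(A)^{++}$) with Mumford's descent theorem for line bundles along isogenies (cf.\ \cite[\S 23]{mumford:abvars}): a line bundle $\cN$ on $A$ descends to $A/G$ along $f$ if and only if $G$ lifts to a subgroup of the theta group $\cG(\cN)$, and such a lift exists (over $\bar k$) precisely when $G \subset K(\cN)$ is isotropic for the commutator pairing $e_\cN$. Moreover, if $\cN$ descends to $\cM$ on $B$, then $\deg(\cM)^2 = \deg(\cN)^2/|G|^4$ as polarization degrees, so $\cM$ is principal iff $|G|^2 = |K(\cN)|$.

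For the direction $(\Leftarrow)$, assume $\beta \in \End(A)^{++}$ with $G$ maximal isotropic in $K(\cL_0^\beta)$ for $e_{\cL_0^\beta}$. Since $\varphi_{\cL_0}$ is an isomorphism, $K(\cL_0^\beta) = \ker(\varphi_{\cL_0}\circ\beta) = \ker(\beta)$, and in particular $G \subset K(\cL_0^\beta)$. By the descent theorem, $\cL_0^\beta$ descends to a line bundle $\cM$ on $B$ with $f^\ast \cM \cong \cL_0^\beta$ (up to a degree-zero twist). Ampleness of $\cL_0^\beta$ together with the fact that $f$ is a finite surjective morphism forces $\cM$ to be ample as well. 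Finally, maximal isotropy gives $|G|^2 = |K(\cL_0^\beta)|$, so $\cM$ is a principal polarization on $B$.

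For the direction $(\Rightarrow)$, suppose $\cM_0$ is a principal polarization on $B$ and set $\cL = f^\ast \cM_0$. Then $\cL$ is ample on $A$ with polarization isogeny $\varphi_\cL = f^\vee\circ \varphi_{\cM_0}\circ f$. By Proposition~\ref{prop:ppav}, $\beta := \Phi_{\cL_0}(\cL) = \varphi_{\cL_0}^{-1}\circ \varphi_\cL$ lies in $\End(A)^{++}$, and $\cL$ is algebraically equivalent to $\cL_0^\beta$, so $K(\cL_0^\beta) = K(\cL)$. Because $\varphi_{\cM_0}$ is an isomorphism, $K(\cL) = f^{-1}(\ker f^\vee)$ has order $|G|^2$ and obviously contains $G$. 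The existence of the descent of $\cL$ to $\cM_0$ forces (via the descent theorem applied in reverse) $G$ to be isotropic for $e_{\cL_0^\beta}$, and the equality $|G|^2 = |K(\cL_0^\beta)|$ then upgrades this to maximal isotropy.

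The main subtlety will be the passage between algebraic and linear equivalence: Proposition~\ref{prop:ppav} only determines $\cL_0^\beta$ up to algebraic equivalence, while the descent theorem needs a specific line bundle. Both directions survive this ambiguity because the kernel $K(\cL_0^\beta)$, the commutator pairing $e_{\cL_0^\beta}$, and the property of descending along $f$ depend only on the algebraic equivalence class; any two representatives differ by a translation $t_x^\ast$ which acts trivially on the level of polarization isogenies and hence also on the induced pairing on $K(\cL_0^\beta)$. (As noted in Remark~\ref{rem:totsym}, one can later rigidify using totally symmetric representatives when needed for theta structures.)
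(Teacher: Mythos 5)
Your proof is correct and follows essentially the same route as the paper's: both directions hinge on Proposition~\ref{prop:ppav} to produce $\beta$ from $f^\ast\cM_0$ (resp.\ to interpret $\beta$ as a polarization), together with Mumford-style descent of $\cL_0^\beta$ along $f$. You spell out two points the paper abbreviates — the explicit descent criterion ($G$ isotropic in $K(\cL_0^\beta)$ for $e_{\cL_0^\beta}$ iff $\cL_0^\beta$ descends) and the degree count $|K(f^\ast\cM)| = |G|^2\,|K(\cM)|$ that turns isotropy into maximal isotropy — where the paper simply cites \cite[Prop.2.4.7]{drobert:thesis} and says ``for degree reasons.'' Your closing remark about algebraic versus linear equivalence being harmless here is accurate and matches the spirit of Remark~\ref{rem:totsym}.
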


\begin{proof}
  If $B$ admits a principal polarization $\cM_0$, we apply the proposition to
  $f^\ast \cM_0$, so there exists an
  endomorphism $\beta$ making the following diagram commute:
\begin{equation}
\xymatrix{
A  \ar[rd]_{\varphi_{\cL_0}} & \ar[l]_{\beta} A  \ar[r]^{f} \ar[d]^{\varphi_{f^\ast \cM_0}} &B \ar[d]^{\varphi_{\cM_0}} \\
&  A^\vee &\ar[l]_{f^\vee}B^\vee.
} 
\label{eq:cyclic}
\end{equation}
It is easy to check that $\beta$ is symmetric, it is totally positive because
$\cL_0^\beta \isom f^\ast \cM_0$ is ample (see Proposition~\ref{prop:ppav}), and $G$ is maximal isotropic
inside $\ker(\varphi_{\cL^\beta})$ for degree reasons.

Conversely, given an endomorphism $\beta$ satisfying the conditions of the
lemma, let $\cL_0^\beta$ be an element of the algebraic equivalence class representing the polarization associated to $\beta$. Then $\cL_0^\beta$ descends under $f$ into a polarization
$\cM_0$ by descent theory \cite[Prop.2.4.7]{drobert:thesis}, and the following
diagram is commutative
\begin{equation}
\xymatrix{
A  \ar[rd]_{\varphi_{\cL_0}} & \ar[l]_{\beta} A  \ar[r]^{f} \ar[d]^{\varphi_{\cL_0^\beta}} &B \ar[d]^{\varphi_{\cM_0}} \\
&  A^\vee &\ar[l]_{f^\vee}B^\vee.
} 
\label{diag:cyclic}
\end{equation}
So $f^*(\cM_0)$ is algebraically equivalent to $\cL_0^{\beta}$; moreover since the degree of $\beta$ is equal to \linebreak $\deg f  \cdot \deg f^\vee=(\deg f)^2$, then
the degree of $\varphi_{\cM_0}$ is 1 and hence, $\cM_0$ is a principal polarization. 
\end{proof}

\begin{rem}
If $G$ is cyclic of order $\ell$, we will apply the above lemma to elements $\beta \in \End(A)^{++}$ that are real endomorphisms of degree $\ell^2$. Then $G$ is automatically a maximal isotropic subgroup for the commutator pairing $e_{\cL_0^\beta}$.

 In dimension~$2$, we have that $\End(A)^+$ is a real quadratic order. Again, for $G$ cyclic of prime order $\ell$, there is a
 principal polarization on $A/G$ if and only if there exists a totally
 positive real endomorphism $\beta \in \End(A)^{++}$ of $K_0 / \Q$-norm $\ell$ such that $G \subset A[\beta]$ (which is then automatically maximal isotropic for $e_{\cL_0^\beta}$). In particular, $(\ell)$ splits into $(\beta)(\beta^c)$ in $\End(A)^+$ (where $\beta^c$ is the real conjugate of $\beta$), and it is easy to see that this can happen only if $\End(A)^+$ is maximal locally at~$\ell$.
  In particular, unless the real multiplication is locally maximal at
  $\ell$, there is no cyclic isogeny of degree $\ell$ between $A$ and
  another principally polarized abelian variety.

  We remark that while an isogeny between (non-polarized) abelian varieties
  can always be written as the composition of cyclic isogenies of prime
  degrees, Proposition~\ref{lem:princpolB} shows that this is not true if we
  require the abelian varieties to be principally polarized.
\end{rem}

%
%
\section{Theta Coordinates and Theta Structures}\label{sec:theta}
An abelian variety $A$ together with a polarization $\cL$ is not sufficient to get a canonical projective embedding of $A$. One needs to add more data in order to single out canonical coordinates. In this section, we recall the notion of a theta structure and explain how it yields canonical theta coordinates. We discuss symmetric theta structures and recall how canonical theta coordinates are related under isogenies (the isogeny theorem). Finally, we explain how to choose an appropriate
symmetric theta structure on the original abelian variety given to our algorithm. For this section we fix $k$ an algebraically closed field of positive characteristic. 

\subsection{The theta group and theta structures}\label{subsec:thetagroup}
Let $A$ be an abelian variety over $k$.
Following \cite{mumford:eq1}, let $\cL$ be an ample line bundle on $A$ and let~$K(\cL)$ be the kernel of the polarization isogeny $\vphi_{\cL} \colon A \ra A^\vee,\, x \mapsto t_x^*\cL \otimes \cL^{-1}$. For any element $x$ of $K(\cL)$, the line bundles $\cL$ and $t_x^*\cL$ are isomorphic, but not necessarily in a unique way.

\paragraph{Mumford's theta group.} 
The Mumford theta group is
$$
\cG(\cL) = \{(x, \phi_x) \colon x \in K(\cL), \phi_x \colon \cL \xra{\sim} t_x^* \cL \}, 
$$
under the group law 
$(x,\phi_x) \cdot (y,\phi_y)=(x+y,t_x^*\phi_y \circ \phi_x)$ for $(x,\phi_x), (y,\phi_y) \in \mathcal G(\cL)$.   
The inverse of~$(x, \phi_x)$ under this group law is $(-x, t_{-x}^* \phi_x^{-1})$. We have an exact sequence 
$$
0 \ra k^\times \ra \cG(\cL) \ra K(\cL) \ra 0, 
$$ 
where $\alpha \in k^\times \mapsto (0, \alpha) \in \cG(\cL)$ (here, $\alpha \colon \cL \xra{\sim} t_0^*\cL = \cL$ is the multiplication-by-$\alpha$ automorphism of~$\cL$) and $\cG(\cL) \ra K(\cL)$ is the forgetful map $(x, \phi_x) \mapsto x$. 

\paragraph{Theta structures.}\label{par:thetastruct} The ample line bundle $\cL$ on $A$ gives rise to a non-degenerate symplectic form $e_{\cL}$ on $K(\cL)$ defined as follows: for any $x, y \in K(\cL)$, let $\widetilde{x}, \widetilde{y} \in \cG(\cL)$ be arbitrary lifts and set $e_{\cL}(x, y) := \widetilde{x} \widetilde{y} \widetilde{x}^{-1} \widetilde{y}^{-1}$. As lifts of elements of $K(\cL)$ are defined up to scalars, and since $k^\times$ is the center of $\cG(\cL)$, the form $e_{\cL}$ is well defined. Moreover, $\widetilde{x} \widetilde{y} \widetilde{x}^{-1} \widetilde{y}^{-1}$ being in the kernel of $\cG(\cL) \ra K(\cL)$, we can see $e_{\cL}(x, y)$ as an element of $k^\times$. We call $e_{\cL}$ the commutator pairing. If $K(\cL) = K_1(\cL) \oplus K_2(\cL)$ is a symplectic decomposition of $K(\cL)$ with respect to the commutator pairing $e_{\cL}$ (here, $K_i(\cL)$ are maximal isotropic subspaces of $K(\cL)$), then $K_1(\cL) \isom \Z(\delta) := \bigoplus_{i = 1}^g \Z / \delta_i \Z$ where $\delta_1 \mid \delta_2 \mid \dots \mid \delta_g$ are the elementary divisors of $K_1(\cL)$. We then say that the polarization (ample line bundle) $\cL$ is of type $\delta = (\delta_1, \dots, \delta_g)$. 

Given a tuple $\delta = (\delta_1, \dots, \delta_g) \in \Z^g$ with $\delta_1 \mid \dots \mid \delta_g$, let $K(\delta) = \Z(\delta) \oplus \widehat{\Z}(\delta)$ where $\Zhat(\delta) = \Hom(\Z(\delta), k^\times)$. Then $K(\delta)$ is equipped with the standard pairing $e_\delta$ coming from the duality, i.e., 
$$
e_\delta((x_1, y_1), (x_2, y_2)) = \frac{y_2(x_1)}{y_1(x_2)} \in k^\times. 
$$ 
Moreover, let $\cH(\delta)$ be the Heisenberg group, that is, the group whose underlying set is $k^\times \times K(\delta)$ and whose group law is 
\begin{equation}\label{def:Heisenberg}
(\alpha_1, x_1, y_1) \cdot (\alpha_2, x_2, y_2) = (\alpha_1 \alpha_2  y_2(x_1), 
x_1 + x_2, y_1 + y_2), \qquad \forall \alpha_i \in k^\times, x_i \in \Z(\delta), y_i \in \Zhat(\delta).   
\end{equation}
Note that the Heisenberg group fits into the exact sequence
$$0 \ra k^\times \ra \cH(\delta) \ra K(\delta) \ra 0,$$
where $\alpha \in k^\times \mapsto (\alpha, 0, 0)$ and $(\alpha, x, y) \in \cH(\delta) \mapsto (x,y)$. Also, note that the inverse of $(\alpha, x, y) \in \cH(\delta)$ is given by $(\alpha^{-1} y(x), -x, -y)$.

A \textit{theta structure} $\Theta_{\cL}$ of type $\delta$ is an isomorphism of central extensions 
$$
\Theta_{\cL} \colon \cH(\delta) \xra{\sim}  \cG(\cL). 
$$
One can show that the pairing $e_\delta$ on $K(\delta)$ is induced from the commutator pairing on 
$\cH(\delta)$ and thus, the induced isomorphism $\overline{\Theta}_{\cL} \colon K(\delta) \xra{\sim} K(\cL)$ which makes the following diagram commutative
\[
\xymatrix{
0 \ar[r] & k^\times \ar[r] \ar[d]_{\text{id}} & \cH(\delta) \ar[d]_{\Theta_{\cL}} \ar[r] & K(\delta) \ar[r] \ar[d]_{\overline{\Theta}_{\cL}} & 0 \\
0 \ar[r] & k^\times \ar[r] & \cG(\cL) \ar[r] & K(\cL) \ar[r] & 0    
}
\]
is a symplectic isomorphism since $\Theta_{\cL}$ pulls back commutators.
There is a canonical map (of sets) $s_\delta \colon K(\delta) \ra \cH(\delta)$ given by $(x, y) \mapsto (1, x, y)$ that is a homomorphism of groups when restricted to $K_1(\delta) := \Z(\delta)$ and $K_2(\delta) := \Zhat(\delta)$. Via the theta structure, 
these homomorphisms yield two sections $s_{K_1(\cL)} \colon K_1(\cL) \ra \cG(\cL)$ and $s_{K_2(\cL)} \colon K_2(\cL) \ra \cG(\cL)$, where $K_i(\cL) := \overline{\Theta}_{\cL} (K_i(\delta))$ for $i = 1, 2$. Conversely \cite[Prop. 3.3.3]{drobert:thesis}, given a symplectic isomorphism $\overline{\Theta}_{\cL} \colon K(\delta) \ra K(\cL)$ together with two (group) sections $s_{K_1(\cL)}$, $s_{K_2(\cL)}$ we get a unique theta structure $\Theta_{\cL}$ above $\overline{\Theta}_{\cL}$ inducing these two sections. In particular, for any symplectic isomorphism $\overline{\Theta}_{\cL} \colon K(\delta) \ra K(\cL)$ (equivalently, a Witt basis for $K(\cL)$ with respect to $e_{\cL}$), there is a theta structure $\Theta_{\cL}$ above that isomorphism.  This follows from \cite[Prop. 3.2.6]{drobert:thesis} : the projection $\cG(\cL) \ra K(\cL)$ admits a section above $K \subset K(\cL)$ if and only if $K$ is isotropic for the pairing $e_{\cL}$.

\paragraph{Totally symmetric line bundles and symmetric theta structures.}\label{par:sym_theta_structures}
Let $(A, \cL)$ be a polarized abelian variety. A priori, there is no way to fix a particular choice of a representative in the algebraic equivalence class of $\cL$. One way to do that is to introduce the notions of symmetric and totally symmetric line bundles. A line bundle $\cL$ on $A$ is called \emph{symmetric} if $[-1]^* \cL \isom \cL$.  

Suppose now that $\cL$ is a symmetric line bundle and fix an isomorphism $\psi \colon \cL \xra{\sim} [-1]^* \cL$ (such an isomorphism is unique up to a scalar in $k^\times$). This means that we have isomorphisms on fibers $\psi(x) \colon \cL(x) \xra{\sim} [-1]^*\cL(x) = \cL(-x)$. We assume that $\psi$ is normalized in the sense that $\psi(0) \colon \cL(0) \xra{\sim} \cL(0)$ is the identity map (otherwise rescale $\psi$). If $x \in A[2]$ then $\cL(-x) = \cL(x)$ and hence, $\psi$ is given on $\cL(x)$ by multiplication by a scalar $e^{\cL}_*(x) \in k^\times$. Note that $e^{\cL}_*(x) =\pm 1$ for all $x\in A[2]$. We call the line bundle $\cL$ \emph{totally symmetric} if $e^{\cL}_*(x) =1$ for all $x \in A[2]$. The notion of totally symmetric line bundles is useful for making a canonical choice of an isomorphism class of line bundles within an algebraic equivalence class. More precisely, if $2 \mid \delta_1, \dots, \delta_g$ where $\delta = (\delta_1, \dots , \delta_g)$ is the type of $\cL$, then there exists a unique totally symmetric line bundle in the algebraic equivalence class of $\cL$ \cite[Prop.4.2.4]{drobert:thesis}. 

Suppose now that $\cL$ is symmetric and let $\psi \colon \cL \xra{\sim} [-1]^* \cL$ be the isomorphism of $\cL$ with $[-1]^* \cL$. Assume that $\psi$ is normalized, i.e. the restriction of $\psi$ to the fiber $\cL(0)$ of 0 is the identity.   
We then have an automorphism $\gamma_{-1} \colon \cG(\cL) \ra \cG(\cL)$ given by 
$$
\gamma_{-1} (x, \phi) = \left (-x, (t_{-x}^* \psi)^{-1} \circ ([-1]^* \phi) \circ \psi \right ). 
$$ 
In addition, we have a metaplectic automorphism $\gamma_{-1} \colon \cH(\delta) \ra \cH(\delta)$ of the Heisenberg group 
given by $\gamma_{-1} (\alpha, x, y) = (\alpha, -x, -y)$. A theta structure $\Theta_{\cL}$ on $(A, \cL)$ is called \textit{symmetric} if $\gamma_{-1} \circ \Theta_{\cL} = \Theta_{\cL} \circ \gamma_{-1}$. Suppose $\cL$ is a totally symmetric line bundle on $A$ of type $\delta$. Then, according to \cite[Remark 2, p.318]{mumford:eq1}, every symplectic isomorphism $\overline{\Theta} \colon K(\delta) \to K(\cL)$ is induced by a symmetric theta structure $\Theta_{\cL} \colon \cH(\delta) \to \cG(\cL)$.

The reason why this notion will be useful (see the key application in Section~\ref{par:unfolding}) is the following result proved in \cite[Prop.4.3.1]{drobert:thesis}: 

\begin{prop}\label{prop:symtheta}
Let $\cL$ be a totally symmetric line bundle on $A$ of type $\delta$. Let $\overline{\Theta} \colon K(\delta) \ra K(\cL)$ be a symplectic isomorphism. In order to fix a symmetric theta structure on $(A, \cL)$ that induces $\overline{\Theta}$ it suffices to fix a symplectic isomorphism $K(2\delta) \ra K(\cL^2)$ that restricts to $\overline{\Theta}$ on $K(\delta)$.
\end{prop}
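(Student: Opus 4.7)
The plan is to exploit the relationship between the totally symmetric line bundle $\cL_X$ (of type $\delta$) and its square $\cL_X^2$, which is itself totally symmetric and has type $2\delta$. The guiding principle is that the extra data needed to promote the symplectic isomorphism $\overline{\Theta}$ to a symmetric theta structure on $\cL_X$ is naturally encoded in an extension of $\overline{\Theta}$ to the doubled level.

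First, since $\cL_X^2$ is totally symmetric (being the square of the symmetric bundle $\cL_X$), the result quoted from \cite[Remark 2, p.318]{mumford:eq1} directly above the proposition ensures that any given symplectic isomorphism $\overline{\Xi} \colon K(2\delta) \to K(\cL_X^2)$ extending $\overline{\Theta}$ is induced by some symmetric theta structure $\Theta_{\cL_X^2} \colon \cH(2\delta) \to \cG(\cL_X^2)$. Second, one uses the natural identification $K(\cL_X) = 2\,K(\cL_X^2)$ inside $K(\cL_X^2)$, and the analogous identification $K(\delta) = 2\,K(2\delta)$ (under which, by hypothesis, $\overline{\Xi}$ restricts to $\overline{\Theta}$), to descend $\Theta_{\cL_X^2}$ to a theta structure $\Theta_{\cL_X}$ on $\cL_X$ above $\overline{\Theta}$. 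Concretely, any element of $K_i(\cL_X)$ has the form $2y$ for some $y \in K_i(\cL_X^2)$, and one constructs the section of $\cG(\cL_X)$ above $K_i(\cL_X)$ from the section of $\cG(\cL_X^2)$ above $K_i(\cL_X^2)$ provided by $\Theta_{\cL_X^2}$, using the normalized isomorphism $\psi \colon \cL_X \xra{\sim} [-1]^{*}\cL_X$ to rigidify the squaring procedure. By \cite[Prop.~3.3.3]{drobert:thesis} these sections, together with $\overline{\Theta}$, determine $\Theta_{\cL_X}$ uniquely.

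The final step is to verify that $\Theta_{\cL_X}$ is symmetric: the metaplectic automorphism $\gamma_{-1}$ acts compatibly on $\cH(\delta)$ and $\cH(2\delta)$ as $(x,y) \mapsto (-x,-y)$, and compatibly on $\cG(\cL_X)$ and $\cG(\cL_X^2)$ via the normalized trivializations, so the symmetry of $\Theta_{\cL_X^2}$ passes to $\Theta_{\cL_X}$. The main obstacle is showing that the descent from level $2\delta$ to level $\delta$ is independent of the lift $y$ of a given element of $K(\cL_X)$; this is exactly where the totally symmetric hypothesis is essential, as it guarantees (via the canonical choice of $\psi$ with $e_{*}^{\cL_X} \equiv 1$ on $X[2]$) that the squaring construction produces a section of $\cG(\cL_X)$ depending only on $2y$ and not on $y$ itself.
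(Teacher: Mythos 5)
The paper itself does not prove Proposition~\ref{prop:symtheta}; it simply cites \cite[Prop.4.3.1]{drobert:thesis}. So the comparison is between your proposal and the argument one would reconstruct from Mumford's results that Robert's thesis invokes.

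Your approach is headed in the right direction: use the total symmetry of $\cL_X^2$ and Mumford's Remark 2 to realize $\overline{\Xi}$ by a symmetric theta structure $\Theta_{\cL_X^2}$, and then descend to $\cL_X$. However, there is a genuine gap. You choose \emph{some} symmetric theta structure $\Theta_{\cL_X^2}$ above $\overline{\Xi}$, but such a choice is not unique: above a fixed symplectic isomorphism, symmetric theta structures form a torsor under $\Hom(K(2\delta),\mu_2)$, which is generally nontrivial. The assertion being proved is that $\overline{\Xi}$ alone determines $\Theta_{\cL_X}$, so you must show the resulting $\Theta_{\cL_X}$ is independent of the choice of $\Theta_{\cL_X^2}$. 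Your closing paragraph addresses only the independence of the lift $y$ with $2y$ fixed, which is a different (and subordinate) well-definedness issue. The missing point is handled as follows: two choices of $\Theta_{\cL_X^2}$ above $\overline{\Xi}$ differ by a $\mu_2$-valued character $\chi$ of $K(2\delta)$; since the image of $K(\delta)$ in $K(2\delta)$ lies in $2K(2\delta)$, $\chi$ restricts trivially to it, so the two theta structures agree on the subgroup of $\cH(2\delta)$ through which the compatibility with $\cH(\delta)$ factors; invoking Mumford's \cite[\S 2, Prop.7]{mumford:eq1} (a symmetric theta structure for $\cL_X^2$ determines a \emph{unique} compatible symmetric theta structure for $\cL_X$) then forces the descended structures to coincide.

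A secondary issue is that the ``squaring procedure'' you invoke to pass from $\cG(\cL_X^2)$ to $\cG(\cL_X)$ above $y \mapsto 2y$ is left vague, and the natural Mumford maps go the other way ($E_2 \colon \cG(\cL_X) \to \cG(\cL_X^2)$, $(x,\phi) \mapsto (x,\phi^{\otimes 2})$). The correct mechanism is not a direct map $\cG(\cL_X^2)\to\cG(\cL_X)$ over $y\mapsto 2y$ but rather Mumford's compatibility condition for the pair $(\Theta_{\cL_X},\Theta_{\cL_X^2})$ via $E_2$, whose kernel $\{\pm 1\}$ leaves a sign ambiguity that the symmetry requirement resolves --- this is precisely the content of Mumford's Proposition~7 that your argument should be quoting explicitly in place of the informal ``squaring'' construction.
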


Note that for the above proposition, we identified $K(\delta)$ with a subgroup of $K(2\delta)$ in the following way. The elements $(x_1,\dots, x_g)\in \Z(\delta) \subset K(\delta)$ are sent to $(2x_1, \dots , 2x_g) \in \Z(2\delta)$, whereas for each $y \in \widehat{\Z}(\delta) \subset K(\delta)$ there exists a unique $y' \in \widehat{\Z}(2\delta)$ such that $y'(x) = y(2x)$ for all $x \in K(2\delta)$.

\subsection{Theta coordinates} 
Assume now that the line bundle $\cL$ on $A$ is very ample. This means that each $k$-basis of the space of global sections $\Gamma(A,\cL)$ yields a projective embedding $A \hra \mathbf{P}^{d-1}_k$, where $d = \deg \cL$. This embedding is only defined up to the action of $\PGL_d(k)$. In order to fix an embedding, we need to fix canonical coordinates on $A$ (i.e. a canonical basis for $\Gamma(A, \cL)$). This choice will come precisely from the choice of a theta structure.  
Once we have fixed a theta structure $\Theta_{\cL}$ on $(A, \cL)$, one gets 
canonical theta functions $\{\theta_i^{\Theta_{\cL}}\}_{i \in K_1(\delta)}$ forming a basis for the space of global sections 
$\Gamma(A, \cL)$ and a canonical theta null point $(\theta^{\Theta_{\cL}}_i(0))_{i \in K_1(\delta)}$ of  $(A, \cL)$. We will get this canonical basis via the representation theory of 
the Heisenberg group $\cH(\delta)$. 

Recall that the Heisenberg group $\cH(\delta)$ has a natural irreducible representation on 
the space $V(\delta)$ of $k$-valued functions on $K_1(\delta)$ that is given by  
$$
((\alpha, x, y) \cdot f) (z) =  \alpha y(z) f(z + x). 
$$
One can show that any representation $V$ of $\cH(\delta)$ with a natural action of $k^\times$ (as in \cite[Prop.2]{mumford:eq1}) is a direct sum of $r$ copies of $V(\delta)$ where $r = \dim_k V^{\widetilde{K}}$, for $\widetilde{K} \subset \cH(\delta)$ any maximal level subgroup. 
To explain why the theta structure yields a canonical embedding, note that the Mumford theta group $\cG(\cL)$ acts on the space $V = \Gamma(A, \cL)$, where the action is given by 
$$
(x, \phi) \cdot s = t_{-x}^* \phi(s). 
$$
One can show that this action is irreducible \cite[Prop.3.4.3]{drobert:thesis}, and one easily sees that $k^\times \hookrightarrow \cG(\cL)$ acts in the natural way. It follows that the 
theta structure $\Theta_{\cL}$ determines a unique (up to a scalar multiple) $\cH(\delta)$-equivariant isomorphism $\varphi \colon V(\delta) \ra \Gamma(A, \cL)$. Since $V(\delta)$ has a canonical basis $\{\gamma_i\}_{i \in K_1(\delta)}$ given by
$$
K_1(\delta) \ni j \mapsto \gamma_i (j) = 
\begin{cases}
1 & \text{ if } i = j, \\
0 & \text{otherwise,}
\end{cases}
$$
the theta structure $\Theta_{\cL}$ yields a canonical basis $\left \{\theta_i^{\Theta_{\cL}} :=  \varphi(\gamma_i) \mid i \in K_1(\delta)\right \}$ for $\Gamma(A, \cL)$, up to scalar multiples.

If we summarise the above we get: Let $(A, \cL, \Theta_{\cL})$ be a polarized abelian variety with theta structure and suppose $\cL$ is very ample and of type $\delta = (\delta_1, \dots, \delta_g)$. Let $\left \{\theta_i^{\Theta_{\cL}} =  \varphi(\gamma_i) \mid i \in K_1(\delta)\right \}$ be the basis of $\Gamma(A, \cL)$ induced by the theta structure $\Theta_{\cL}$ and fix once and for all an ordering $a_1, \dots, a_d $ of the elements of $K_1(\delta)$, where $d = \# K_1(\delta) = \delta_1\cdots \delta_g$. This ordering determines an ordering $\theta_1^{\Theta_{\cL}}, \dots , \theta_d^{\Theta_{\cL}}$ of the theta functions, which then yields an embedding
$$A \hookrightarrow \P_k^{d-1}, \, x \mapsto (\theta_1^{\Theta_{\cL}}(x) : \cdots :  \theta_d^{\Theta_{\cL}}(x)).$$

Since we always keep the same ordering of the elements of $K_1(\delta)$, we will subsequently write the embedding as $x \mapsto (\theta_i^{\Theta_{\cL}}(x))_{i \in K_1(\delta)}$. Moreover, via the symplectic isomorphism $\overline{\Theta}_{\cL}$, we may consider indexing the theta functions $\theta_i^{\Theta_{\cL}}$ by $K_1(\cL)$.

\paragraph{The isogeny theorem.}\label{subsec:isogThm}
We now recall a theorem (see \cite[\S 3.6]{drobert:thesis} or \cite[\S 6.5]{birkenhake-lange}) that relates theta coordinates on two isogenous polarized abelian varieties with theta structure. Suppose that $(A,\cL)$ and $(B, \cM)$ are two polarized abelian varieties over a field $k$ and let $f \colon (A, \cL) \ra (B, \cM)$ be a separable $k$-isogeny of polarized abelian varieties, i.e. $f^*\cM$ is linearly equivalent to $\cL$. Let~$G \subset A$ be the finite kernel of $f$ so that $B \isom A /G$. It is explained in \cite[\S 3.6]{drobert:thesis} what it means for two theta structures $\Theta_{\cL}$ and $\Theta_{\cM}$ to be compatible with respect to the isogeny $f$. One condition is that the symplectic decomposition on $K(\cL)$ induced by $\Theta_{\cL}$ is $f$-compatible with the symplectic decomposition on $K(\cM)$ induced by $\Theta_{\cM}$  in the sense that $f(K_i(\cL)) \cap K(\cM) = K_i(\cM)$ for $i = 1,2$. 

The polarized abelian varieties with theta structure $(A,\cL, \Theta_{\cL})$ and $(B, \cM, \Theta_{\cM})$ yield~canonical embeddings of the varieties $A$ and $B$ respectively into projective space, provided $\cL$ and $\cM$ are very ample. The following theorem tells us how to compute the canonical theta coordinates of $f(x)$ out of the canonical theta coordinates of $x$ for $x \in A(\bar{k})$.

\begin{thm}\label{thm:isog} 
Let $f \colon (A,\cL, \Theta_{\cL}) \to (B, \cM, \Theta_{\cM})$ be an isogeny of polarized abelian varieties with theta structure. There exists $\lambda\in \bar{k}^\times$ such that for all $i\in K_1(\cM)$ and $x\in A(\bar{k})$, we have 
\begin{equation}\label{eq:isogthm}
\displaystyle \theta_i^{\Theta_{\cM}}(f(x))=\lambda \cdot \sum_{\substack{j \in K_1(\cL) \\ f(j) = i}}\theta_j^{\Theta_{\cL}}(x). 
\end{equation}
\end{thm}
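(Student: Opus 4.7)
The plan is to reduce the claim to Stone--von Neumann uniqueness for irreducible representations of the Heisenberg group $\cH(\delta_Y)$, where $\delta_Y$ is the type of $\cL_Y$. The basic idea is that both the pullback $f^{*} \colon \Gamma(Y,\cL_Y) \to \Gamma(X,\cL_X)$ and the ``explicit formula'' on the right of~\eqref{eq:isogthm} give $\cH(\delta_Y)$-equivariant maps into the same irreducible representation of $\cH(\delta_Y)$, so they must agree up to a scalar, which is exactly the $\lambda \in \bar{k}^{\times}$ in the statement.

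First, I would set up the pullback picture. Since $f^{\ast}\cL_Y$ is linearly equivalent to $\cL_X$, pullback gives an injection $f^{*} \colon \Gamma(Y,\cL_Y) \hookrightarrow \Gamma(X,\cL_X)$. Let $G = \ker f$, which (because $\cL_Y$ descends from $\cL_X$) is isotropic inside $K(\cL_X)$ with respect to $e_{\cL_X}$, and hence lifts canonically to a level subgroup $\widetilde{G} \subset \cG(\cL_X)$. Classical descent theory identifies the image of $f^*$ with the $\widetilde{G}$-invariants $\Gamma(X,\cL_X)^{\widetilde{G}}$, where $\widetilde{G}$ acts via the Mumford representation $(y,\phi)\cdot s = t_{-y}^{\ast}\phi(s)$. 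Moreover, on this invariant subspace the quotient $\widetilde{G}^{\perp}/\widetilde{G} \subset \cG(\cL_X)/\widetilde{G}$ acts, and it can be identified with $\cG(\cL_Y)$ so that $f^{*}$ is equivariant for this identification.

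Second, I would transport everything to the Heisenberg model using the theta structures. The $f$-compatibility condition $K_i(\cL_Y) = f(K_i(\cL_X)) \cap K(\cL_Y)$ forces $\Theta_{\cL_X}$ to carry $\widetilde{G}$ into a subgroup $\widetilde{G}_0 \subset \cH(\delta_X)$ whose projection to $K(\delta_X)$ splits as $G_1 \oplus G_2$ with $G_i \subset K_i(\delta_X)$, and the isogeny $f$ induces surjections $\bar{f}_i \colon K_i(\delta_X)/G_i \xrightarrow{\sim} K_i(\delta_Y)$. Now consider the explicit linear map
\[
\Psi \colon V(\delta_Y) \longrightarrow V(\delta_X), \qquad \gamma_i \longmapsto \sum_{\bar{f}_1(j)=i} \gamma_j,
\]
on the canonical bases. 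A direct computation using the Heisenberg action $((\alpha,x,y)\cdot f)(z) = \alpha\, y(z)\, f(z+x)$ shows that $\Psi$ lands in $V(\delta_X)^{\widetilde{G}_0}$ and intertwines the $\cH(\delta_Y)$-action on $V(\delta_Y)$ with the natural action of $\widetilde{G}_0^{\perp}/\widetilde{G}_0 \cong \cH(\delta_Y)$ on the invariant subspace.

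Finally, we have two $\cH(\delta_Y)$-equivariant maps from $V(\delta_Y)$ into $V(\delta_X)^{\widetilde{G}_0}$: namely $\Psi$ and the composition obtained by transporting $f^{*}$ through the isomorphisms $\Gamma(Y,\cL_Y) \cong V(\delta_Y)$ and $\Gamma(X,\cL_X) \cong V(\delta_X)$ given by the theta structures. Since $V(\delta_Y)$ is an irreducible representation of $\cH(\delta_Y)$ of the type covered by the finite Stone--von Neumann theorem, any two such equivariant maps differ by a scalar $\lambda \in \bar{k}^{\times}$. Unpacking this identity at a point $x \in X(\bar k)$ gives precisely the formula~\eqref{eq:isogthm}. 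The main obstacle is bookkeeping: verifying the equivariance of $\Psi$ and the identification $\widetilde{G}_0^{\perp}/\widetilde{G}_0 \cong \cH(\delta_Y)$ requires carefully using the compatibility of theta structures so that the two Heisenberg actions match on the nose; once this is set up, the remaining calculation is a routine manipulation of the formulas for the $\cH(\delta)$-action on $V(\delta)$.
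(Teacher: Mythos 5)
The paper does not actually prove Theorem~\ref{thm:isog}; it cites it from Robert's thesis and Birkenhake--Lange, so there is no in-paper proof to compare against. Your proposal reproduces, in outline, exactly the standard argument from those sources (going back to Mumford's \emph{Equations defining abelian varieties I}): pass via the theta structures to the Heisenberg models, identify the image of $f^{*}$ with the $\widetilde G$-invariants, observe that both $f^{*}$ and the explicit averaging map $\Psi$ are $\cH(\delta_Y)$-equivariant maps out of the irreducible representation $V(\delta_Y)$, and conclude by the finite Stone--von Neumann theorem (Schur's lemma) that they agree up to a scalar. That scalar is the $\lambda$ in the statement, and unwinding the bases gives the displayed formula. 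This is the right approach and the proposal is correct in structure.

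One small caveat on precision: you write that the single condition $K_i(\cL_Y)=f(K_i(\cL_X))\cap K(\cL_Y)$ ``forces'' the kernel $G$ to split as $G_1\oplus G_2$ with $G_i\subset K_i(\cL_X)$. By itself that condition does not imply the splitting; the splitting is part of the full compatibility requirement on $(\Theta_{\cL_X},\Theta_{\cL_Y})$ built into the phrase ``isogeny of polarized abelian varieties with theta structure'' (see \cite[\S3.6]{drobert:thesis}). Since you invoke $f$-compatibility in full elsewhere, this is only a matter of wording, but it is worth stating: without the splitting of $G$ the indexing set of the sum in~\eqref{eq:isogthm} and the canonical level subgroup $\widetilde G_0$ need not be what your computation of $\Psi$ assumes. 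The remaining verifications you defer (that $\Psi$ lands in the $\widetilde G_0$-invariants, and that the identification $\widetilde G_0^{\perp}/\widetilde G_0\cong\cH(\delta_Y)$ intertwines the two actions) are indeed routine once the splitting is in place.
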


In addition, we can state an affine version of the isogeny theorem as follows: suppose that we have fixed affine coordinates on both $\widetilde{A}$ and $\widetilde{B}$ (i.e. we chose $f$-compatible very ample line bundles and theta structures on $A$ and $B$ respectively and see the closed points via the canonical embeddings). Then there exists a lifting $\widetilde{f} \colon \widetilde{A} \ra \widetilde{B}$ 
such that the following diagram is commutative: 
\[
\xymatrix{
\widetilde{A} \ar[r]^{\widetilde{f}} \ar[d]^{p_A} & \widetilde{B} \ar[d]^{p_B} \\ 
A \ar[r]^{f} & B  \\
}
\]

\paragraph{The action of the Heisenberg group on theta coordinates.}\label{subsubsec:canaffine}

Suppose we are given $(A, \cL, \Theta_{\cL})$ with~$\cL$ very ample, which determines the embedding $A \hookrightarrow \P_k^{d-1}$, $x \mapsto(\theta_i^{\Theta_{\cL}}(x))_{i \in K_1(\delta)}$. Let $p \colon \mathbf{A}_k^d \backslash \{0\} \ra \mathbf{P}_k^{d-1}$ be the natural projection map and consider the affine cone $\widetilde{A} = p^{-1}(A)$. The action of $\cG(\cL)$ on $\Gamma(A, \cL)$ induces an action of $\cG(\cL)$ on $A$ in terms of theta coordinates, which is above the translation by elements of $K(\cL)$. To be more precise, given $x\in A$ and $\widetilde{x} \in \mathbf{A}^d(\bar{k})$ above $(\theta_i^{\Theta_{\cL}}(x))_{i \in K_1(\delta)} \in \P^{d-1}(\bar{k})$, then for $(w, \phi_w) \in \cG(\cL)$, the element $(w, \phi_w) \cdot \widetilde{x}$ is an affine lift of $x+w$. 

Let us give a description of this action for $\cH(\delta)$ (via the theta structure $\Theta_{\cL}$). Let $(\alpha, x, y) \in \cH(\delta)$ and suppose it is mapped to $(w,\phi_w)$ via $\Theta_{\cL}$. Consider $\theta_i^{\Theta_{\cL}}$ for $i \in K_1(\delta)$. \cite[Prop.6.4.2.]{birkenhake-lange} describes the action of $(\alpha, x, y)$ on $\theta_i^{\Theta_{\cL}}$ above the translation by $-w$, and by the slight adaption $(\alpha, x, y) \leftrightarrow (\alpha, -x, -y)$ we obtain an action above the translation by $w$, which is given by
\begin{equation}\label{eq:actionHeisenberg}
(\alpha, x, y) \cdot \theta_i^{\Theta_{\cL}} = \alpha e_\delta((i+x,0),(0,-y))\theta_{i+x}^{\Theta_{\cL}} = \alpha y(-i-x) \theta_{i+x}^{\Theta_{\cL}}.
\end{equation}
Hence we see that translation by an element of $K_1(\cL)$ acts on $\widetilde{A}$ as permutation of the theta coordinates, whereas  translation by an element of $K_2(\cL)$ acts on $\widetilde{A}$ as dilatation of the theta coordinates. This proves the following.

\begin{lem}\label{lem:canaffine}
A choice of an affine lift $\widetilde{x}$ of an element $x = (\theta_i^{\Theta_{\cL}}(x))_{i \in K_1(\delta)}$ gives a section of the projection $\widetilde{A} \to A $ above $x+K(\cL)$. That is to say, once an affine lift $\widetilde{x}$ of $x$ is fixed, the action of the Heisenberg group $\cH(\delta)$ on $\widetilde{A}$ determines a lift above each $x + w$, for $w \in K(\cL)$.
\end{lem}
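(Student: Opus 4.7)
The plan is to build the section directly from the canonical set-theoretic splitting $s_\delta \colon K(\delta) \to \cH(\delta)$, $(a,b) \mapsto (1,a,b)$, and transport it to $\cG(\cL_X)$ via the theta structure. Concretely, for $w \in K(\cL_X)$, let $(a,b) := \overline{\Theta}_{\cL_X}^{-1}(w) \in K(\delta)$ and put $\widetilde{w} := \Theta_{\cL_X}(1,a,b) \in \cG(\cL_X)$; this is a distinguished choice of lift of $w$. I would then define the candidate section $\sigma \colon x + K(\cL_X) \to \widetilde{X}$ by $\sigma(x+w) := \widetilde{w} \cdot \widetilde{x}$, where the dot denotes the Heisenberg action on $\widetilde{X}$ described just before the statement.

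To check that $\sigma$ actually takes values in $\widetilde{X}$ and is a section of $p_X \colon \widetilde{X} \to X$ above $x + K(\cL_X)$, the idea is to invoke the explicit formula in equation \eqref{eq:actionHeisenberg}: the $i$-th coordinate of $\widetilde{w} \cdot \widetilde{x}$ equals $b(-i-a)\,\theta_{i+a}^{\Theta_{\cL_X}}(x)$. This is a permutation-and-scaling of the coordinates of $\widetilde{x}$ by nonzero scalars, so the result is a nonzero element of $\mathbf{A}^d(\bar k)$; it therefore lies in $\widetilde{X}$. Moreover, because the action of $\cH(\delta)$ on $\widetilde{X}$ covers the translation action of $K(\delta) \cong K(\cL_X)$ on $X$, we have $p_X(\widetilde{w}\cdot\widetilde{x}) = x+w$, which is exactly the section property $p_X \circ \sigma = \operatorname{id}_{x+K(\cL_X)}$.

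The only subtle point, and the reason the statement is worth recording, is well-definedness. Any two lifts of $w$ to $\cG(\cL_X)$ differ by an element of the center $k^\times \subset \cH(\delta)$, and so produce affine lifts of $x+w$ that differ by a nonzero scalar; without a choice, one does not get a single lift but only a projective class. Fixing the canonical set-section $s_\delta(a,b) = (1,a,b)$, equivalently fixing the scalar to be $1$, removes this ambiguity and singles out one affine lift per translate. Beyond this observation the proof is essentially a repackaging of the Heisenberg action already worked out in equation \eqref{eq:actionHeisenberg}, so I do not anticipate a real obstacle.
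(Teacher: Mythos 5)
Your proposal is correct and takes essentially the same approach as the paper: the paper's (implicit) proof is the observation immediately preceding the lemma that the Heisenberg action in \eqref{eq:actionHeisenberg} permutes and rescales theta coordinates, combined with the note after the lemma that one uses the canonical embeddings $K_i(\delta)\hookrightarrow \cH(\delta)$ to distinguish a lift of each $w$. Your explicit identification of the well-definedness issue — that lifts in $\cG(\cL_X)$ differ by central scalars, so one must fix the canonical set-theoretic section $s_\delta(a,b)=(1,a,b)$ — is exactly what the paper's remark is gesturing at.
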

\noindent Note that for the above lemma, we used the canonical embedding of $K_i(\delta) \hookrightarrow \cH(\delta)$, for $i = 1,2$.

\paragraph{Product line bundles and product theta structures.}\label{par:rfold}
Let $(A, \cL, \Theta_{\cL})$ be a polarized abelian variety of type $\delta = (\delta_1, \dots, \delta_g)$ with theta structure, and let $r\ge 1$ be a nonnegative integer . There is a natural polarization $\cL ^{\star r}$ on $A^r$ defined by 
$$
\cL^{\star r} = p_1^*\cL \otimes \cdots \otimes p_r^* \cL,
$$
where $p_i \colon A^r \ra A$ is the projection of the $i$-th factor of $A^r$ for $i = 1, \dots , r$. A polarization (or ample line bundle) $\cL$ on the variety $A^r$ is called a \textit{product polarization} if $\cL$ is isomorphic to $p_1^* \cL_{A} \otimes \cdots \otimes p_r^* \cL_{A}$ for some polarization $\cL$ on $A$. According to \cite[Lem.1, p.323]{mumford:eq1} we have 
$$\cG(\cL^{\star r}) \cong \cG(\cL)^{\times r} / \{ (\alpha_1, \dots, \alpha_r) : \alpha_i \in k^\times \subset \cG(\cL), \alpha_1 \cdots \alpha_r = 1 \},$$ where
$((x_1, \varphi_1 \colon \cL \xrightarrow{\sim} t_{x_1}^*\cL), \dots , (x_r, \varphi_r \colon \cL \xrightarrow{\sim} t_{x_r}^*\cL))$ is mapped to \\ $((x_1,\dots, x_r), p_1^*\varphi_1 \otimes \cdots \otimes p_r^* \varphi_r \colon \cL^{\star r} \xrightarrow{\sim} t_{(x_1, \dots, x_r)}^* \cL^{\star r})$.

The type $\delta^{\star r}$ of $\cL^{\star r}$ is easily seen to be
$$
\delta^{\star r} = \left (\underbrace{\delta_1, \dots, \delta_1}_r, \underbrace{\delta_2, \dots, \delta_2}_r, \dots, \underbrace{\delta_g, \dots, \delta_g}_r \right ) \in \Z^{gr}, 
$$
since $\Z(\delta^{\star r}) \cong \Z(\delta)^r$, and $K(\delta^{\star r})  \cong K(\delta)^r$ is equipped with the symplectic pairing
$$e_{\delta^{\star r}}((z_1, \dots , z_r), (z_1', \dots , z_r')) = e_\delta(z_1, z_1') \cdots e_\delta(z_r, z_r') \in k^\times.$$ 
The Heisenberg group $\cH(\delta^{\star r})$ is then defined as in (\ref{def:Heisenberg}). The theta structure $\Theta_{\cL} \colon \cH(\delta) \to \cG(\cL)$ induces in a natural way a $k^\times$-isomorphism
$$\Theta_{\cL^{\star r}} \colon \cH(\delta^{\star r}) \to \cG(\cL^{\star r}),$$
given by
$$(\alpha, (x_1, y_1), \dots, (x_r, y_r)) \mapsto (\alpha \cdot \Theta_{\cL}(1,x_1,y_1), \dots, \Theta_{\cL}(1, x_r, y_r))$$
(actually, we can put the scalar $\alpha$ in any coordinate). The canonical coordinates for the $r$-fold product theta structure 
$\Theta_{\cL^{\star r}}$ are simply given by 
\begin{equation}
\theta_{\mathbf{i}}^{\Theta_{\cL^{\star r}}}(\mathbf{x}) = \theta_{i_1}^{\Theta_{\cL}}(x_1) \cdots  
\theta_{i_r}^{\Theta_{\cL}}(x_r), 
\end{equation}
where $\mathbf{i} = (i_1, \dots, i_r) \in K_1(\cL)^r = K_1(\cL^{\star r})$ and $\mathbf{x} = (x_1, \dots, x_r) \in A^r(\bar{k})$. 

We call a theta structure on $(A^r, \cL^{\star r})$ an $r$-fold product theta structure if it arises via the above construction for some polarization $\cL$ on $A$. Note that $r$-fold product theta structures are key for our algorithm as they will allow us to deduce data about the polarized abelian variety $(B, \cM)$ from data about the polarized abelian $r$-fold product abelian variety $(B^r, \cM^{\star r})$. The following lemma will be useful in the sequel.

\begin{lem}\label{lem:productthetastructure}
A theta structure $\Theta^{\star r} \colon \cH(\delta^{\star r}) \ra \cG(\cL^{\star r})$ is of product form if and only if the induced symplectic isomorphism $\overline{\Theta}^{\star r} \colon K(\delta^{\star r}) \ra K(\cL^{\star r})$ is of product form.
\end{lem}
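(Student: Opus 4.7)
The only-if direction is immediate from the construction, since the $r$-fold product formula manifestly restricts on $K(\delta^{\star r})$ to a product of symplectic isomorphisms. My plan for the converse is to assume $\overline{\Theta}^{\star r}$ is of product form and then to exhibit factor-wise theta structures $\Theta_1,\dots,\Theta_r$ on $(X,\cL_X)$ out of which $\Theta^{\star r}$ is built.

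The first step would be to set up natural factor inclusions on both sides of the theta structure: $\iota_i \colon \cH(\delta) \hookrightarrow \cH(\delta^{\star r})$, placing data in the $i$-th slot, and $j_i \colon \cG(\cL_X) \hookrightarrow \cG(\cL_X^{\star r})$ via the identification $\cG(\cL_X^{\star r}) \cong \cG(\cL_X)^r / \langle \prod_k \alpha_k = 1 \rangle$ quoted from \cite[Lem.~1, p.~323]{mumford:eq1}. Since the $i$-th and $j$-th copies of $K(\delta)$ inside $K(\delta^{\star r})$ are mutually orthogonal under $e_{\delta^{\star r}}$, the images $\iota_i(\cH(\delta))$ and $\iota_j(\cH(\delta))$ commute for $i\neq j$, and an analogous statement holds for the $j_i$. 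A short bookkeeping verification using $\prod_k \alpha_k = 1$ shows that each $j_i$ is injective, and moreover that the preimage under $\cG(\cL_X^{\star r}) \twoheadrightarrow K(\cL_X)^r$ of the $i$-th factor $K(\cL_X)$ is exactly $j_i(\cG(\cL_X))$.

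The main step, and also the principal obstacle, is to exploit the product form of $\overline{\Theta}^{\star r}$: the composition $\Theta^{\star r} \circ \iota_i$ projects onto the $i$-th factor of $K(\cL_X)^r$, so by the preceding preimage identification it must land in $j_i(\cG(\cL_X))$. I would therefore define $\Theta_i := j_i^{-1} \circ \Theta^{\star r} \circ \iota_i$, which is a central-extension isomorphism $\cH(\delta) \to \cG(\cL_X)$ lifting the $i$-th component of $\overline{\Theta}^{\star r}$, hence a theta structure on $(X,\cL_X)$.

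Finally, using the factorisation $(1, z_1,\dots,z_r) = \iota_1(1,z_1)\cdots\iota_r(1,z_r)$ in $\cH(\delta^{\star r})$ together with the fact that $\Theta^{\star r}$ is a group homomorphism, one obtains
$$
\Theta^{\star r}(1, z_1, \dots, z_r) = j_1(\Theta_1(1, z_1)) \cdots j_r(\Theta_r(1, z_r)),
$$
and the scalar $\alpha \in k^\times$ can be placed in any single factor by the relation $\prod_k \alpha_k = 1$ (matching the paper's remark that in the product construction $\alpha$ may be put in any coordinate). This is exactly the formula defining an $r$-fold product theta structure, so $\Theta^{\star r}$ is of product form.
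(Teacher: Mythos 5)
Your proof takes a genuinely different route from the paper's. The paper restricts $\Theta^{\star r}$ to the diagonal copy of $\cH(\delta)$, writes the image of $(1,(x,y),\dots,(x,y))$ as $((z,\varphi_1),\dots,(z,\varphi_r))$ with $\varphi_i=\alpha_i\varphi_1$, and then \emph{defines} a single theta structure $\Theta$ on $(X,\cL_X)$ pointwise by $\Theta(1,x,y):=(z,\sqrt[r]{\alpha_2\cdots\alpha_r}\cdot\varphi_1)$, after which it verifies that $\Theta^{\star r}$ is the $r$-fold product of this $\Theta$. Your route via the factor embeddings $\iota_i$, $j_i$ is cleaner and avoids the pointwise extraction of $r$-th roots.

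There is, however, a gap you should flag. The definition of an $r$-fold product theta structure given just above the lemma requires the \emph{same} theta structure $\Theta_{\cL_X}$ on every factor. Your argument produces $r$ theta structures $\Theta_1,\dots,\Theta_r$, each lifting the common symplectic isomorphism $\overline{\Theta}$, but a priori they need not coincide: two theta structures above the same $\overline{\Theta}$ differ by a character of $K(\delta)$, and nothing in the factorisation $\Theta^{\star r}(1,z_1,\dots,z_r)=j_1(\Theta_1(1,z_1))\cdots j_r(\Theta_r(1,z_r))$ forces $\Theta_1=\cdots=\Theta_r$. So what you have actually shown is that $\Theta^{\star r}$ decomposes as a product of \emph{some} factor-wise theta structures, which is a weaker conclusion than the one stated. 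You should either (a) supply the extra argument that the $\Theta_i$ agree --- the paper sidesteps this issue by passing to $r$-th roots along the diagonal, which in your notation amounts to replacing each $\Theta_i$ by a common ``geometric mean'' --- or (b) note explicitly that the downstream use of the lemma (the factorisation of theta null points in Section~\ref{subsec:unfolding}) only requires the weaker product decomposition, so that your conclusion is sufficient for the paper's purposes even if it does not match the literal definition.
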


\begin{proof}
Let $\Theta^{\star r} \colon \cH(\delta^{\star r}) \ra \cG(\cL^{\star r})$ be a theta structure such that the induced symplectic isomorphism $\overline{\Theta}^{\star r} \colon K(\delta^{\star r}) \ra K(\cL^{\star r})$ is of product form. Denote by $\overline{\Theta} \colon K(\delta) \to K(\cL)$ the restriction of $\overline{\Theta}^{\star r}$ to a single factor and suppose $(x,y) \in K(\delta)$ is mapped via $\overline{\Theta}$ to $z \in K(\cL)$. Then $\Theta^{\star r}$ must send
$$(1,(x,y),\dots,(x,y)) \mapsto ((z,\varphi_1),\dots, (z,\varphi_r)),$$
where the isomorphisms $\varphi_1,\dots, \varphi_r$ satisfy $\varphi_2 = \alpha_2 \cdot \varphi_1, \dots, \varphi_r = \alpha_r \cdot \varphi_1$, with $\alpha_2, \dots , \alpha_r \in k^\times$. Define $\Theta \colon \cH(\delta) \to \cG(\cL)$ pointwise by 
$$(1,x,y) \mapsto (z, \sqrt[r]{\alpha_2}\cdots\sqrt[r]{\alpha_r} \cdot \varphi_1).$$
One carefully checks that $\Theta$ is a $k^\times$-isomorphism and that $\Theta^{\star r}$ is equal to the $r$-fold product of~$\Theta$.
\end{proof}

%
%
\section{Computing a Theta Null Point for the Target Abelian Variety}\label{sec:computing_theta_null_point}
Let $(A, \cL_0)$ be a principally polarized ordinary and simple abelian variety of dimension $g$ over the finite field $k$. For what follows we set $n = 2$ or $n = 4$. Let $\cL$ be the totally symmetric line bundle in the algebraic equivalence class of $\cL_0^{\otimes n}$ and let $\Theta_\cL$ be a symmetric theta structure on $(A, \cL)$. Let $\ell$ be an odd prime number different from the characteristic of $k$. Let $\beta \in \End(A)^{++}$ be a totally positive symmetric endomorphism of degree $\ell^2$ and let $G \subset \ker(\beta)$ be a $\Gal(\bar{k} / k)$-stable cyclic subgroup of order $\ell$.
We now compute a theta null point for the target abelian variety $(B = A/G, \cM)$, where~$\cM_0$ is the induced principal polarization on $B$ from Section~\ref{subsec:inducedpp} and $\cM = \cM_0^{\otimes n}$. To do that, we apply Theorem~\ref{thm:isog} to the $\beta$-contragredient isogeny $\widehat{f} \colon (B, \cM^\beta) \ra (A, \cL)$ to express the canonical coordinates of $(B, \cM^\beta)$ (with respect to a compatible theta structure $\Theta_{\cM^\beta}$) in terms of the canonical coordinates of $(A, \cL, \Theta_{\cL})$ and then use one more time the isogeny theorem for a suitably chosen isogeny of polarized abelian varieties with theta structures 
$$
F \colon (B^r, (\cM^{\beta})^{\star r}, \Theta_{(\cM^{\beta})^{\star r}}) \ra (B^r, \cM^{\star r}, \widetilde{\Theta}_{\cM^{\star r}}) 
$$
to obtain the canonical coordinates of $(B^r, \cM^{\star r}, \widetilde{\Theta}_{\cM^{\star r}})$. Since the target theta structure $\widetilde{\Theta}_{\cM^{\star r}}$ is not necessarily a product theta structure, we 
cannot a priori use it to recover a theta null point for~$(B, \cM)$. We thus convert it first to a theta structure 
of the form $\Theta_{\cM} \star \cdots \star \Theta_{\cM}$ ($r$-times) for a theta structure $\Theta_{\cM}$ on $(B, \cM)$ via a suitable choice of a metaplectic automorphism (an automorphism of the corresponding Heisenberg group) and then use a transformation formula for the theta constants to recover the theta null point for $(B, \cM, \Theta_{\cM})$.   

\subsection{Applications of the isogeny theorem}\label{subsec:isogthmapp}
 Starting from Theorem~\ref{thm:isog}, we wish to use the theta null point of $A$ for the symmetric theta structure~$\Theta_{\cL}$ on $(A, \cL)$ to compute the theta null point of $B$ for a suitably chosen symmetric theta structure $\Theta_{\cM}$ on $(B, \cM)$.  One possible way is to first compute the theta null point of $A$ for a symmetric theta structure $\Theta_{\cL^\beta}$ on $(A, \cL^\beta)$ in order to apply the isogeny theorem to $f \colon (A, \cL^\beta) \ra (B, \cM)$ which will give us the theta null point of $B$ for a symmetric theta structure $\Theta_{\cM}$ compatible with $\Theta_{\cL^\beta}$ (via~$f$). There are two major problems with this approach: 

\begin{enumerate}
\item There is no obvious isogeny of polarized abelian varieties between $(A, \cL^\beta)$ and $(A, \cL)$ available\footnote{One might misleadingly think that $\beta \colon A \ra A$ is such an isogeny. Yet, the degree of $\beta^* \cL$ is 
$n^g\deg(\beta) = n^g \ell^2$ whereas the degree of $\cL^\beta$ is $n^g \ell$, so $\beta^* \cL$ is not isomorphic to $\cL^\beta$}. 

\item Even if one knows such an isogeny, the isogeny theorem applied to $(A, \cL^\beta) \ra (A, \cL)$ expresses the theta coordinates for $\Theta_{\cL}$ as polynomials in the theta coordinates for $\Theta_{\cL^\beta}$, thus requiring one to solve a polynomial system which may be expensive in practice.  
\end{enumerate}

\paragraph{Addressing i).}\label{par:issue1}
To address i), one may try to express $\beta$ as $u \bar{u}$ for some $u \in \End(A)$, where $\bar u$ is the $K/K_0$-conjugate of $u$, and then apply the isogeny theorem for $u \colon A \ra A$ instead. In this case,~$u^* \cL$ will be algebraically equivalent to $\cL^\beta$ and hence, we will have an isogeny $u \colon (A, \cL^\beta) \ra (A, \cL)$ of polarized abelian varieties. The problem is that
$\beta$ need not be in the image of the norm map $N_{K/K_0} \colon \cO_K \ra \cO_{K_0}$ and even if it were (i.e. $\beta = u \bar{u}$ for some $u \in \cO_K$), such a $u$ need not be an easily computable endomorphism
of $A$. 

Instead, we use an idea appearing in \cite{CossetRobert} and motivated by Zahrin's trick \cite[Rem.16.12]{milne:abvars} used to show that for any abelian variety $A$, the abelian variety $(A \times A^\vee)^4$ is principally polarizable. Note that for any integer $r \geq 1$, $\beta$ induces an endomorphism 
$\beta^r \colon A^r \ra A^r$ of the $r$-fold product~$A^r$. Choosing $r > 1$ allows us to search for a matrix $F \in M_r(\End(A))$ that satisfies $\beta^r = \bar{F} F$. If for example $r=4$, one knows by \cite{siegel} that any totally positive element of $\cO_{K_0}$ is a sum of 4 algebraic numbers in the same field, i.e. there exist $\alpha_1, \dots, \alpha_4 \in K_0$ such that 
$\beta = \alpha_1^2 + \dots + \alpha_4^2$. 
In general, the $\alpha_i$'s need not be integral and hence, need not be in
$\End(A)^+$. Yet, assuming that they yield endomorphisms of the $\beta$-torsion and the 
$n$-torsion points (i.e. the denominators are coprime to $n\ell$), one can take $F$ to be the matrix
corresponding to multiplication by $\alpha_1 + \alpha_2 i + \alpha_3 j + \alpha_4 k$ on the
Hamilton quaternions over $K_0$ and observe that ${F}^t F = \bar{F} F = \beta I_4$. By looking at
the corresponding isogeny $F \colon A^4 \ra A^4$, we see that $F^* \cL^{\star 4}$ is algebraically equivalent to $(\cL^\beta)^{\star 4}$ and hence, one can apply the isogeny theorem for $F \colon (A^4, (\cL^\beta)^{\star 4}) \ra (A^4, \cL^{\star 4})$. But this reduces to the same problem as in ii). 

\paragraph{Addressing ii).}\label{par:issue2} We can avoid solving a polynomial system by using the $\beta$-contragredient isogeny $\widehat{f} \colon B \ra A$ in an appropriate way, as we will see in the next section. Therefore we first consider the following lemma.

\begin{lem}\label{lem:mbeta}
Let $\cM_0$ be the (induced) principal polarization on $B$ defined in Section~\ref{subsec:inducedpp} and let~$\cM_0^\beta$ be the ample line bundle on $B$ whose polarization isogeny $\varphi_{\cM_0^\beta}$ is $\vphi_{\cM_0} \circ \beta \colon B \ra B^\vee$. 
Then $\widehat{f}^*\cL_0$ is algebraically equivalent to $\cM_0^\beta$.
\end{lem}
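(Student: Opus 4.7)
The plan is to verify that the two line bundles $\widehat{f}^{\ast}\cL_0$ and $\cM_0^{\beta}$ induce the same polarization isogeny $B \to B^{\vee}$; by the discussion in Section~\ref{subsec:principal-polarization} this is equivalent to algebraic equivalence. The argument is then a short diagram chase with the square (\ref{diag:cyclic}) from the proof of Proposition~\ref{lem:princpolB}, combined with the observation that the element $\beta \in K_0$ descends through $f$ to a symmetric totally positive endomorphism of $B$.

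First I would set up the endomorphism $\beta_B$ on $B$. Since $G \subset \ker(\beta_A)$ (writing $\beta_A$ for $\beta$ acting on $A$), there is a unique $\beta_B \in \End(B)$ with $\beta_B \circ f = f \circ \beta_A$. The $\beta$-contragredient isogeny $\widehat{f} \colon B \to A$ is characterized by $\widehat{f} \circ f = \beta_A$; composing on the left with $f$ and cancelling the surjective $f$ yields also $f \circ \widehat{f} = \beta_B$. Because the embedding $K \hookrightarrow \End^0(B)$ is compatible with the Rosati involution on $(B,\cM_0)$ in the sense that complex conjugation corresponds to $\dagger$, the element $\beta \in K_0$ is fixed by $\dagger$, so $\beta_B$ is symmetric, i.e. $\beta_B^{\vee} \circ \varphi_{\cM_0} = \varphi_{\cM_0} \circ \beta_B$. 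Total positivity of $\beta$ in $K_0$ likewise transfers to $\beta_B$, so $\varphi_{\cM_0}\circ \beta_B$ really is a polarization isogeny by Proposition~\ref{prop:ppav} and $\cM_0^{\beta}$ is well defined.

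The main computation is then just a diagram chase. The commutativity of (\ref{diag:cyclic}) reads $\varphi_{\cL_0}\circ \beta_A = f^{\vee}\circ \varphi_{\cM_0}\circ f$; substituting $\beta_A = \widehat{f}\circ f$ and cancelling the isogeny $f$ on the right gives
\[
\varphi_{\cL_0} \circ \widehat{f} \;=\; f^{\vee}\circ \varphi_{\cM_0}.
\]
Using functoriality of the polarization isogeny under pullback, $\varphi_{\widehat{f}^{\ast}\cL_0} = \widehat{f}^{\vee}\circ \varphi_{\cL_0}\circ \widehat{f}$, and hence
\[
\varphi_{\widehat{f}^{\ast}\cL_0} \;=\; \widehat{f}^{\vee}\circ f^{\vee}\circ \varphi_{\cM_0} \;=\; (f\circ \widehat{f})^{\vee}\circ \varphi_{\cM_0} \;=\; \beta_B^{\vee}\circ \varphi_{\cM_0} \;=\; \varphi_{\cM_0}\circ \beta_B \;=\; \varphi_{\cM_0^{\beta}},
\]
where the penultimate step is the symmetry of $\beta_B$ proved above. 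This is the required equality of polarization isogenies, so $\widehat{f}^{\ast}\cL_0$ and $\cM_0^{\beta}$ lie in the same class in $\NS(B)$.

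The only slightly non-formal step is the symmetry and positivity of $\beta_B$ on $B$, which I expect to be the main thing to justify carefully; once that is in hand, the rest is pure functoriality of $\varphi_{(-)}$ and manipulation of the commutative square from Proposition~\ref{lem:princpolB}. An alternative route that avoids invoking the symmetry of $\beta_B$ directly is to pull everything back by $f$: using Corollary~\ref{cor:endo_polarisation}, $f^{\ast}(\widehat{f}^{\ast}\cL_0) = \beta_A^{\ast}\cL_0 \equiv \cL_0^{\beta_A^{\dagger}\beta_A} = \cL_0^{\beta_A^{2}}$, while on the other hand $f^{\ast}(\cM_0^{\beta})$ has polarization isogeny $f^{\vee}\circ \varphi_{\cM_0}\circ \beta_B \circ f = f^{\vee}\circ \varphi_{\cM_0}\circ f \circ \beta_A = \varphi_{\cL_0^{\beta_A^{2}}}$, and then one concludes using injectivity of $f^{\ast}$ on Néron–Severi groups. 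Either way the identity drops out.
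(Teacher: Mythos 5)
Your proof is correct, and your main argument takes a genuinely different route from the paper's. The paper parametrizes: by Proposition~\ref{prop:ppav} there is some $\gamma\in\End(B)^{++}$ with $\widehat{f}^*\cL_0\sim\cM_0^\gamma$; it then pulls back by $f$, compares $f^*\cM_0^\gamma\sim\cL_0^{\beta\gamma}$ against $(\widehat f\circ f)^*\cL_0=\beta^*\cL_0\sim\cL_0^{\beta^2}$ via Corollary~\ref{cor:endo_polarisation}, and concludes $\gamma=\beta$ by the injectivity of $\Phi_\lambda$. You instead work directly on $B$: from the square (\ref{diag:cyclic}) together with $\widehat f\circ f=\beta$ you extract the identity $\varphi_{\cL_0}\circ\widehat f = f^\vee\circ\varphi_{\cM_0}$, and then compute $\varphi_{\widehat f^*\cL_0}=\widehat f^\vee\circ\varphi_{\cL_0}\circ\widehat f=(f\circ\widehat f)^\vee\circ\varphi_{\cM_0}=\beta_B^\vee\circ\varphi_{\cM_0}=\varphi_{\cM_0}\circ\beta_B=\varphi_{\cM_0^\beta}$. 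The one non-formal ingredient your chase needs, the symmetry of $\beta_B$ for the Rosati involution of $(B,\cM_0)$, you justify by the positivity of that involution on the CM field $K$ (so it restricts to complex conjugation on $K$ and to the identity on $K_0$); this is the same fact the paper implicitly relies on when it treats $\beta$ and $\gamma$ as symmetric real endomorphisms. What your route buys is avoiding the auxiliary $\gamma$ and the final cancellation; the extracted relation $\varphi_{\cL_0}\circ\widehat f = f^\vee\circ\varphi_{\cM_0}$ makes transparent that $\widehat f$ is the adjoint of $f$ with respect to the two principal polarizations. Your \emph{alternative} route at the end (pull back by $f$, compare via Corollary~\ref{cor:endo_polarisation}, conclude by injectivity of $f^*$ on N\'eron--Severi) is essentially the paper's argument repackaged to skip the explicit $\gamma$.
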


\begin{proof}
By Proposition~\ref{prop:ppav} applied to $(B, \cM_0)$, there exists $\gamma \in \End(B)^{++}$ such
that $\widehat{f}^\ast \cL_0$ is algebraically equivalent to $\cM_0^\gamma$.
Now, we have the following algebraic equivalences $f^\ast \cM_0^\gamma \sim \cL_0^{\beta \gamma}$
and $\cL_0^{\beta \gamma} \sim (f \circ \widehat{f})^\ast \cL_0=\beta^\ast
\cL_0 \sim \cL_0^{\beta^2}$ where the last equivalence comes from
Corollary~\ref{cor:endo_polarisation} and the fact that $\beta$ is a real
endomorphism. By applying Proposition~\ref{prop:ppav} again, we get that
$\beta=\gamma$.
\end{proof}

\begin{rem}
The line bundles satisfy $(\widehat{f}^*\cL_0)^{n} \sim \widehat{f}^*\cL_0^{n} \sim \widehat{f}^*\cL$, as $n \nmid \ell$, and $(\cM_0^\beta)^{n} \sim \cM^{\beta}$. Therefore,~$\widehat{f}^*\cL$ is algebraically equivalent to $\cM^{\beta}$. But then, both line bundles being totally symmetric implies that $\widehat{f}^\ast \cL$ is linearly equivalent to $\cM^{\beta}$.
\end{rem}

Using Lemma~\ref{lem:mbeta} we can recover the theta coordinates for $(B, \cM^\beta)$ with respect to a suitably chosen theta structure $\Theta_{\cM^\beta}$ without solving systems of polynomial equations. Then using the same idea as in i) applied to $F \colon (B^4, (\cM^\beta)^{\star 4}) \ra (B^4, \cM^{\star 4})$, we compute the theta null point for $(B^4, \cM^{\star 4})$ for some compatible theta structure $\widetilde{\Theta}_{\cM^{\star 4}}$. In general, we can recover the theta null point for a single polarized factor $(B, \cM)$ only after a symplectic transformation of the theta coordinates (induced by a metaplectic automorphism turning $\widetilde{\Theta}_{\cM^{\star 4}}$ into a product structure), as explained in Section \ref{subsec:metaplect}.

Note that in some cases $\beta$ can be written as the sum of 2 squares of real algebraic integers, therefore in the sequel we will consider $F$ as a real endomorphism of $B^r$ for $r = 2$ or $r = 4$.

\paragraph{Isogeny theorem for $\widehat{f}$.} 
Lemma~\ref{lem:mbeta} shows that $\widehat{f} \colon (B, \cM^\beta) \ra (A, \cL)$ is an isogeny of polarized abelian varieties. Let $\Theta_{\cM^{\beta}}$ be a theta structure on $(B, \cM^\beta)$ compatible with the theta structure $\Theta_{\cL}$, i.e. such that the isogeny $\widehat{f} \colon (B, \cM^\beta, \Theta_{\cM^\beta}) \ra (A, \cL, \Theta_{\cL})$ is an isogeny of polarized abelian varieties with theta structures. Then $\Theta_{\cM^\beta}$ induces a symplectic decomposition $K(\cM^{\beta}) = K_1(\cM^\beta) \oplus K_2(\cM^\beta)$. 
Since $K_i(\cM^{\beta})=K_i(\cM^\beta)[\beta]\oplus K_i(\cM^\beta)[n]$, we have a symplectic decomposition $K_1(\cM^\beta)[n]\oplus K_2(\cM^\beta)[n]$ of $B[n]$ which yields (via $\widehat{f}$) the symplectic decomposition 
on $K(\cL) = A[n]$ determined by the theta structure $\Theta_{\cL}$. 
If we assume that the kernel $\widehat{G}$ of $\widehat{f}$ is contained in $K_2(\cM^\beta)[\beta]$ (which we can always do since the compatibility requirement on $\Theta_{\cM^\beta}$ is only on the $n$-torsion and not on the $\beta$-torsion points), the kernel $G$ of $f$ coincides with $\widehat{f}(K_1(\cM^\beta)[\beta])$. Moreover, the isogeny $\widehat{f}$  induces an isomorphism between $K_1(\cM^\beta)[n]$ and $K_1(\cL)$ and significantly simplifies the formula appearing in the isogeny theorem when applied to $\widehat{f}$. Indeed, there exists a constant $\lambda \in \bar{k}^\times$ such that for all points $y \in B(\bar{k})$ and all $i\in K_1(\cL)$, 
\begin{equation}\label{eq:fhat}
\theta_i^{\Theta_\cL}(\widehat{f}(y))=\lambda \cdot \theta_j^{\Theta_{\cM^\beta}}(y), 
\end{equation}
where $j\in K_1(\cM^\beta)[n]$ is the unique preimage of $i$ via $\widehat{f}$. Specializing to $y = 0_B$, we obtain 
\begin{equation}\label{eq:fhat_null}
\theta_i^{\Theta_\cL}(0_A)=\lambda \cdot \theta_j^{\Theta_{\cM^\beta}}(0_B).  
\end{equation}

\paragraph{Isogeny theorem for $F$.}\label{par:isogF}
Let $F \colon B^r \ra B^r$ be as in Section~\ref{par:issue2}.  
We first prove the following: 
\begin{lem}\label{lem:F}
The line bundles $F^*\cM_0^{\star r}$ and $(\cM_0^\beta)^{\star r}$ are algebraically equivalent. 
\end{lem}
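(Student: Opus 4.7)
The plan is to apply Corollary~\ref{cor:endo_polarisation} to the endomorphism $F \in \End(B^r)$ and the product polarization $\cM_0^{\star r}$, and then identify the resulting twist with $(\cM_0^\beta)^{\star r}$.

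First I would record the Rosati involution on $\End(B^r) \cong M_r(\End(B))$ with respect to the product principal polarization $\cM_0^{\star r}$. Since $\varphi_{\cM_0^{\star r}} = \mathrm{diag}(\varphi_{\cM_0}, \dots, \varphi_{\cM_0})$ and the dual of a matrix endomorphism of $B^r$ is the transpose of the entrywise dual, one checks directly that for $F = (F_{ij}) \in M_r(\End(B))$,
\[
F^\dagger = (F_{ji}^{\dagger_B})_{i,j},
\]
where $\dagger_B$ is the Rosati involution on $\End(B)$ induced by $\cM_0$. By construction of $F$ (Section~\ref{par:issue1}), all entries of $F$ are elements of $\End(B)^+$, i.e.\ fixed by $\dagger_B$, so $F^\dagger = F^t$, and hence
\[
F^\dagger F \;=\; F^t F \;=\; \beta \cdot I_r \;\in\; \End(B^r)^{++}.
\]

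Next, I would apply Corollary~\ref{cor:endo_polarisation} to the endomorphism $F$ of $B^r$ together with the polarization $\cM_0^{\star r}$ to obtain that $F^\ast \cM_0^{\star r}$ is algebraically equivalent to $(\cM_0^{\star r})^{F^\dagger F} = (\cM_0^{\star r})^{\beta I_r}$.

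It remains to identify $(\cM_0^{\star r})^{\beta I_r}$ with $(\cM_0^\beta)^{\star r}$ up to algebraic equivalence. By Proposition~\ref{prop:ppav} it suffices to compare polarization isogenies:
\[
\varphi_{(\cM_0^{\star r})^{\beta I_r}} \;=\; \varphi_{\cM_0^{\star r}} \circ (\beta I_r) \;=\; \mathrm{diag}(\varphi_{\cM_0} \circ \beta, \dots, \varphi_{\cM_0} \circ \beta) \;=\; \mathrm{diag}(\varphi_{\cM_0^\beta}, \dots, \varphi_{\cM_0^\beta}) \;=\; \varphi_{(\cM_0^\beta)^{\star r}},
\]
so the two line bundles are algebraically equivalent. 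Concatenating the equivalences yields the claim.

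The only subtle step is the computation of $F^\dagger$ for the product polarization, since one must be careful that taking duals of matrix endomorphisms transposes the matrix before applying the entrywise Rosati involution; the rest of the argument is a direct bookkeeping exercise using Proposition~\ref{prop:ppav} and Corollary~\ref{cor:endo_polarisation}.
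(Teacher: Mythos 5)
Your proof is correct and follows essentially the same route as the paper: apply Corollary~\ref{cor:endo_polarisation} to $F$, observe that $F^\dagger = F^t$ because the entries of $F$ are totally real, compute $F^t F = \beta I_r$, and then identify $(\cM_0^{\star r})^{\beta I_r}$ with $(\cM_0^\beta)^{\star r}$ by comparing polarization isogenies. Your explicit remark on how the Rosati involution of the product polarization acts on matrix endomorphisms (transpose then entrywise $\dagger_B$) is exactly the bookkeeping the paper also records.
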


\begin{proof}
  From Corollary~\ref{cor:endo_polarisation}, 
  $F^* \cM_0^{\star r}$ is algebraically equivalent to $(\cM_0^{\star
  r})^{F^\dagger  F}$. Here $F^\dagger$ denotes the action of the Rosati
  involution on $\End(B^r)$, which is given component wise as the
  Rosati involution (of $\End(B)$) on the coefficients of the transpose of $F$.
  Since $F$ is composed of totally real endomorphisms, we have that $F^\dagger=F^t$, so that $F^t F=\beta \Id_r$. Furthermore, comparing polarization isogenies we have that
  $(\cM_0^{\star r})^{\beta \Id_r}$ is algebraically equivalent to $(\cM_0^\beta)^{\star
  r}$.
\end{proof}

As $F^*\cM_0^{\star r}$ is algebraically equivalent to $(\cM_0^\beta)^{\star r}$ and as $\cM^{\star r}$ and 
$(\cM^{\beta})^{\star r}$ are both totally symmetric, we have that $F^*\cM^{\star r}$ is linearly equivalent to $(\cM^\beta)^{\star r}$. 

Consider the $r$-fold product theta structure $\Theta_{(\cM^\beta)^{\star r}}$ on $(B^r,(\cM^{\beta})^{\star r})$ (determined by $\Theta_{\cM^\beta}$) and let $\widetilde\Theta_{\cM^{\star r}}$ be a compatible (for the isogeny $F$) theta structure on $(B, \cM^{\star r})$. According to Theorem~\ref{thm:isog} applied to 
$$
F \colon (B^r, (\cM^\beta)^{\star r}, \Theta_{(\cM^\beta)^{\star r}}) \ra (B^r, \cM^{\star r}, \widetilde{\Theta}_{\cM^{\star r}}), 
$$ 
there exists $\lambda \in \bar{k}^\times$ such that for every $\mathbf{y} = (y_1, \dots, y_r) \in B^r(\bar{k})$ and $\mathbf{k} \in K_1(\cM^{\star r})$,  
\begin{equation}\label{eq:isogF}
\displaystyle\theta^{\widetilde{\Theta}_{\cM^{\star r}}}_{\mathbf{k}}( F(\mathbf{y})) = \lambda \cdot \displaystyle\sum_{\substack{\mathbf{t} \in K_1((\cM^\beta)^{\star r})[\beta] \\ F(\mathbf{t}) = 0}}
\theta^{\Theta_{(\cM^\beta)^{\star r}}}_{\bj+\mathbf{\bt}} (\mathbf{y}) = \lambda \cdot \displaystyle\sum_{\substack{\mathbf{t} \in K_1((\cM^\beta)^{\star r})[\beta] \\ F(\mathbf{t}) = 0}}
\prod_{s=1}^r \theta^{\Theta_{\cM^\beta}}_{j_s+t_s} ({y_s}). 
\end{equation}
Here, we are using the decomposition $K_1((\cM^{\beta})^{\star r}) = K_1((\cM^{\beta})^{\star r})[n] \oplus K_1((\cM^{\beta})^{\star r})[\beta]$, with $\mathbf{j} = (j_1, \dots, j_r)$ being the unique element in $K_1((\cM^{\beta})^{\star r})[n]$ that satisfies $F(\mathbf{j}) = \mathbf{k}$, and $\mathbf{t} = (t_1, \dots, t_r)$ is in the kernel of $F$. Specializing to $\mathbf{y} = 0$, we obtain 
\begin{equation}\label{eq:Fnull}
\displaystyle\theta^{\widetilde{\Theta}_{\cM^{\star r}}}_{\mathbf{k}}(0_{B^r}) = \lambda \cdot \displaystyle\sum_{\substack{\mathbf{t} \in K_1((\cM^\beta)^{\star r})[\beta] \\ F(\mathbf{t}) = 0}}
\prod_{s=1}^r \theta^{\Theta_{\cM^\beta}}_{j_s+t_s} (0_{B}). 
\end{equation}

\subsection{Computing the theta null point $(\theta^{\widetilde{\Theta}_{\cM^{\star r}}}_{\mathbf{k}}(0_{B^r}))_{\mathbf{k} \in K_1(\cM^{\star r})}$ of $B^r$}\label{subsec:complifts}

To evaluate the right-hand side of \eqref{eq:Fnull}, we need to know the theta null point $(\theta^{\Theta_{\cM^\beta}}_{j} (0_{B}))_{j \in K_1(\cM^\beta)}$. The input of the algorithm only provides us with the (projective) theta null point $(\theta^{\Theta_{\cL}}_{i} (0_{A}))_{i \in K_1(\cL)}$ and the (projective) theta coordinates $(\theta^{\Theta_{\cL}}_{i} (t))_{i \in K_1(\cL)}$, where $t$ is a generator of the kernel of $f$.

Equation \eqref{eq:fhat_null} will recover some, but not all of the projective theta coordinates for the theta null point $(\theta^{\Theta_{\cM^\beta}}_{j} (0_{B}))_{j \in K_1(\cM^\beta)}$. More precisely, we will recover $\{\theta^{\Theta_{\cM^\beta}}_{j} (0_{B})\}_{j \in K_1(\cM^\beta)[n]}$ up to a projective factor. To recover the rest of the coordinates, we are hoping to use the action of the Heisenberg group $\cH(\delta_{\cM^\beta})$ on the affine cone $\widetilde{B}$ (as described in Section \ref{subsubsec:canaffine}), which implies that for each $t'' \in K_1(\cM^{\beta})[\beta]$ we have  
\begin{equation}\label{eq:tors-translate}
\theta^{\Theta_{\cM^\beta}}_{j + t''}(0_B) = \lambda_{t''} \cdot \theta^{\Theta_{\cM^\beta}}_{j}(t''), \quad \forall j \in K_1(\cM^{\beta})[n] 
\end{equation}
up to a projective factor $\lambda_{t''} \in \bar{k}^\times$. This shows that one can recover the missing projective coordinates by using the theta coordinates of all the $\ell$-torsion points $t'' \in K_1(\cM^\beta)[\beta]$. Letting $t' \in K_1(\cM^\beta)[\beta]$ be the unique preimage in $K_1(\cM^\beta)[\beta]$ of $t$ under $\widehat{f}$, we can compute (up to projective factors) the coordinates
$$\{\theta^{\Theta_{\cM^\beta}}_{j}(t')\}_{j\in K_1(\cM^\beta)[n]}, \, \{\theta^{\Theta_{\cM^\beta}}_{j}(2t')\}_{j\in K_1(\cM^\beta)[n]}, \dots ,\, \{\theta^{\Theta_{\cM^\beta}}_{j}((\ell-1)t')\}_{j\in K_1(\cM^\beta)[n]}$$ 
using \eqref{eq:fhat} as
\begin{equation}\label{eq:isogthm_fhat}
\theta_j^{\Theta_{\cM^\beta}}(ut') = \lambda_u \cdot \theta_{\widehat{f}(j)}^{\Theta_\cL}(ut), \text{ for } 1 \le u \le \ell-1.
\end{equation}
The latter can be computed by computing the point $ut$ in Mumford coordinates and then converting to theta coordinates $(\theta^{\Theta_\cL}_i(ut))_{i \in K_1(\cL)}$ for $(A,\cL,\Theta_{\cL})$. The problem is that we cannot simply patch the projective coordinates $(\theta^{\Theta_\cL}_i(0_A))_{i \in K_1(\cL)}, (\theta^{\Theta_\cL}_i(t))_{i \in K_1(\cL)}$ ,..., $(\theta^{\Theta_\cL}_i((\ell-1)t))_{i \in K_1(\cL)}$ together and obtain the theta null point of $B$ for $\Theta_{\cM^\beta}$. That is, knowing $\ell$ projective points in $\mathbb{P}^{n^2-1}$, there is no natural map
$$\underbrace{\mathbb{P}^{n^2-1}\times \cdots \times \mathbb{P}^{n^2-1} }_{\ell \text{ times}}\to \mathbb{P}^{\ell n^2-1}$$ 
giving a projective point in $\mathbb{P}^{\ell n^2-1}$. Hence, for computing the right-hand side of \eqref{eq:Fnull} we cannot simply substitute the theta coordinates of $0_A, t, \dots , (\ell-1)t$ in the product $\prod_{s=1}^r \theta^{\Theta_{\cM^\beta}}_{j_s+t''_s} (0_{B})$. Still we would like to make use of Equations \eqref{eq:tors-translate} and \eqref{eq:isogthm_fhat} to compute the right-hand side of \eqref{eq:Fnull}. To do so, we will have to work with affine lifts $\widetilde{0}_B \in \mathbb{A}^{\ell n^2}(\bar{k})$ and $\widetilde{0}_A, \widetilde{t}, \dots , \widetilde{(\ell-1)t} \in \mathbb{A}^{n^2}(\bar{k})$ of the points $0_B$ and $0_A, t, \dots , (\ell-1)t$ respectively.
\begin{notation} 
Let $(A,\cN,\Theta_\cN)$ be a polarized abelian variety with a theta structure. Let $x\in A(\bar{k})$ be a point on $A$ with projective theta coordinates $\left( \theta_i^{\Theta_\cN} (x) \right) _{i \in K_1(\cN)}$. Let $\widetilde{x}$ be an affine lift of the theta coordinates of $x$. Then we write $\theta_i^{\Theta_\cN} (\widetilde{x})$ for the $i$-th coordinate of $\widetilde{x}$.
\end{notation}

Equations \eqref{eq:tors-translate} and \eqref{eq:isogthm_fhat} are equalities between coordinates of projective points. This means, knowing the coordinates of one projective point, we know the coordinates of the second one up to a projective factor. For \eqref{eq:tors-translate}, fixing an affine lift $\widetilde{0}_B$ of $0_B$, we can set the projective factor $\lambda_{t''} = 1$, which determines an affine lift of $t''$. And for \eqref{eq:isogthm_fhat}, writing $ut' = t''$, an affine lift of $ut'$ for $\Theta_{\cM^\beta}$ determines an affine lift of $ut$ for $\Theta_\cL$ by setting $\lambda_u=1$. Combining \eqref{eq:tors-translate} and \eqref{eq:isogthm_fhat}, if we were given an affine lift $\widetilde{0}_B = \left( \theta_{j}^{\Theta_{\cM^\beta}}(\widetilde{0}_B) \right)_{j \in K_1(\cM^\beta)}$ of the theta null point of $B$ for $\Theta_{\cM^\beta}$, we could define affine lifts 
$$\widetilde{0}_A = \left(\theta_i^{\Theta_\cL}(\widetilde{0}_A) \right)_{i \in K_1(\cL)}, \widetilde{t} = \left(\theta_i^{\Theta_\cL}(\widetilde{t}) \right)_{i \in K_1(\cL)}, \dots , \widetilde{(\ell-1)t} = \left(\theta_i^{\Theta_\cL}(\widetilde{(\ell-1)t}) \right)_{i \in K_1(\cL)}$$ of $0_A, t, \dots , (\ell-1)t$ as
\begin{align}\label{def:affinelifts}
\theta_i^{\Theta_\cL}(\widetilde{0}_A) &:= \theta_j^{\Theta_{\cM^\beta}}(\widetilde{0}_B),\nonumber \\ 
\theta_i^{\Theta_\cL}(\widetilde{t}) &:= \theta_{j+t'}^{\Theta_{\cM^\beta}}(\widetilde{0}_B),\\
&\hspace{3mm}\vdots \nonumber \\
\theta_i^{\Theta_\cL}(\widetilde{(\ell-1)t}) &:= \theta_{j+(\ell-1)t'}^{\Theta_{\cM^\beta}}(\widetilde{0}_B), \nonumber
\end{align}
where $i$ runs over $K_1(\cL)$ and $j=\widehat{f}^{-1}(i)$ runs over $K_1(\cM^\beta)[n]$. Unfortunately, the input of the algorithm only provides us with an affine lift $\widetilde{0}_A$ of $0_A$ and not with an affine lift of $0_B$. However, by the affine version of \eqref{eq:isogthm_fhat} where we set the scalar to 1, a choice of lift $\widetilde{0}_A$ determines a lift of $0_B$, which in return determines lifts of $t, \dots , (\ell-1)t$ as in \eqref{def:affinelifts}. If now out of all the possible lifts of $t, \dots , (\ell-1)t $ we were able to determine precisely the ones induced in this way, we could patch their coordinates together and obtain an affine lift of $0_B$ for $\Theta_{\cM^\beta}$, and then using (\ref{eq:Fnull}) we can compute an affine lift of the theta null point of $B^r$ with respect to $\widetilde{\Theta}_{\cM^{\star r}}$. Of course we cannot take arbitrary lifts of $t, \dots, (\ell-1)t$ and hope they correspond to the ones induced by the fixed lift $\widetilde{0}_A$, but we will show that the induced lifts satisfy some compatibility condition, and that we can compute them up to $\ell$-th roots of unity.

\paragraph{Affine lifts of $\widehat{f}$.}\label{par:affine_isogthm}
The isogeny theorem applied to the isogeny of polarized abelian varieties
$$
\widehat{f} \colon (B, \cM^\beta, \Theta_{\cM^\beta}) \to (A, \L, \Theta_\cL)
$$
implies that there exists $\lambda \in \bar{k}^\times$ such that for all $i \in K_1(\cL)$ and for all $y \in B(\bar{k})$,
$$\theta_i^{\Theta_\cL}(\widehat{f}(y)) = \lambda \cdot \theta_j^{\Theta_{\cM^\beta}}(y),$$
where $j\in K_1(\cM^\beta)[n]$ is the unique index such that $\widehat{f}(j) =  i$. The scalar $\lambda$ is just a projective factor of the projective point $\widehat{f}(y) \in A(\bar{k})$, meaning that replacing $\lambda$ by any $\lambda' \in \bar{k}^\times$, the statement of the isogeny theorem remains true. We can lift the isogeny $\widehat{f}$ to an ``affine" isogeny 
$$\widetilde{\widehat{f}}_\lambda \colon \widetilde{B} \to \widetilde{A}, \, \widetilde{y} \mapsto \widetilde{\widehat{f}}(\widetilde{y}),$$
where the $i$-th coordinate of $\widetilde{\widehat{f}}(\widetilde{y})$ (for $i \in K_1(\cL)$) is given by
$$\theta_i^{\Theta_\cL}(\widetilde{\widehat{f}}_\lambda(\widetilde{y})) = \lambda \cdot \theta_{\widehat{f}^{-1}(i)}^{\Theta_{\cM^\beta}}(\widetilde{y}).$$
Moreover, every choice of $\lambda \in \bar{k}^\times$ determines an affine lift $\widetilde{\widehat{f}}_\lambda$ of $\widehat{f}$ by the above.
For now, fix $\widetilde{\widehat{f}} = \widetilde{\widehat{f}}_1$ the affine lift of $\widehat{f}$ where we set $\lambda = 1$. Then $\widetilde{\widehat{f}}$ satisfies
\begin{equation}
\widetilde{\widehat{f}}(\xi \cdot \widetilde{y}) = \xi \cdot \widetilde{\widehat{f}}(\widetilde{y}) \text{ for } \widetilde{y} \in \widetilde{B} \text{ and } \xi \in \bar{k}^\times.
\end{equation}
Note that a choice of affine lift $\widetilde{0}_A$ of $0_A$ determines an affine lift $\widetilde{0}_B$ of $0_B$ by the relation
$$\widetilde{0}_B = \widetilde{\widehat{f}}(\widetilde{0}_A).$$

\paragraph{The action of the Heisenberg group on $\widetilde{B}$.}
Let $\delta_{\cM^\beta} = (n,\dots, n, \ell n) \in \Z^g$ be the type of $\cM^\beta$ and consider the Heisenberg group $\mathcal{H}(\delta_{\cM^\beta})$, whose underlying set is given by $k^\times \times \Z(\delta_{\cM^\beta}) \times \widehat{\Z}(\delta_{\cM^\beta})$, where $\Z(\delta_{\cM^\beta}) = (\Z/ n\Z)^{g-1} \times \Z / \ell n \Z$. By \eqref{eq:actionHeisenberg} we know that $\mathcal{H}(\delta_{\cM^\beta})$ acts on $\widetilde{B}$ as
$$(\alpha, i, k) \cdot \theta_j^{\Theta_{\cM^\beta}} = \alpha k(-j-i) \theta_{j+i}^{\Theta_{\cM^\beta}}$$
and that $(1,i,k) \cdot \widetilde{y}$ is a lift of $y+z$, where $z = \overline{\Theta}_{\cM^\beta}((i,k))$.

Let $i_{t'} \in \Z(\delta_{\cM^\beta})$ be the unique preimage of $t' \in K_1(\cM^\beta)[\beta]$ under $\overline{\Theta}_{\cM^\beta}$. Note that $ui_{t'} = i_{ut'}$, so that $\ell i_{t'} = 0$. The action of $\mathcal{H}(\delta_{\cM^\beta})$ on $\widetilde{B}$ determines affine lifts $\widetilde{t'}, \widetilde{2t'}, \dots, \widetilde{(\ell-1)t'}$ of $t', 2t', \dots, (\ell-1)t'$ as follows
$$\widetilde{ut'} = (1, ui_{t'}, 0) \cdot \widetilde{0}_B \text{, for }1 \le u \le \ell-1.$$
This means that we have equality between the coordinates
$$\theta_{j}^{\Theta_{\cM^\beta}}(\widetilde{ut'}) = \theta_{j }^{\Theta_{\cM^\beta}}((1, ui_{t'},0) \cdot \widetilde{0}_B)  = \theta_{j + ut' }^{\Theta_{\cM^\beta}}(\widetilde{0}_B).$$
Therefore, fixing a lift $\widetilde{0}_A$ of $0_A$, which determines a lift $\widetilde{0}_B$ of $0_B$ by the relation $\widetilde{0}_B = \widetilde{\widehat{f}}(\widetilde{0}_A)$, the lifts $\widetilde{t}, \dots, \widetilde{(\ell-1)t}$ of \eqref{def:affinelifts} satisfy
$$\widetilde{t} = \widetilde{\widehat{f}}((1, i_{t'}, 0) \cdot \widetilde{0}_B) , \dots , \widetilde{(\ell-1)t} = \widetilde{\widehat{f}}((1, (\ell-1)i_{t'}, 0) \cdot \widetilde{0}_B).$$

\begin{defn} 
Let $\widetilde{0}_A$ be a fixed affine lift of $0_A$. Let $\widetilde{0}_B$ be the lift induced by $\widetilde{\widehat{f}}$ and $\widetilde{0}_A$. For  $1 \le u \le \ell-1$, the induced lift
$$\widetilde{\widehat{f}}((1, ui_{t'}, 0) \cdot \widetilde{0}_B) $$
is called \textbf{right} lift of $ut$, and is denoted by $\widetilde{ut}_{\text{right}}$.
\end{defn}

The terminology comes from the following: knowing $\widetilde{0}_A$ and the \textbf{right} lifts $\widetilde{t}_{\text{right}}, \dots , \widetilde{(\ell-1)t}_{\text{right}}$, we can patch their coordinates together and obtain an affine lift $\widetilde{0}_B$ of $0_B$ for $\Theta_{\cM^\beta}$, from which we can compute an affine lift of the theta null point of $B^r$ for $\widetilde{\Theta}_{\cM^{\star r}}$. In general we cannot hope finding the \textbf{right} lifts, but as we will see, the \textbf{right} lifts satisfy some compatibility conditions, so that we can compute them up to $\ell$-th roots of unity.

\paragraph{Excellent lifts.}\label{par:excellent} 
We will show that the \textbf{right} lifts are \textit{excellent} lifts, following the definition of \cite[\S 7.4]{drobert:thesis}. We will therefore briefly recall the notion of the pseudo-operations $\chainadd$, $\chainmultadd$ and $\chainmult$ on the affine cone of an abelian variety. For  more details we refer to \cite[\S 4.4]{drobert:thesis}. Let $(A, \cL, \Theta_{\cL})$ be a polarized abelian variety with theta structure and let $\widetilde{A}$ be the affine cone associated to the projective embedding induced by $\Theta_{\cL}$. Let $\widetilde{0}_A\in \widetilde{A}$ be a fixed lift of $0_A$. Then,
\begin{itemize}
\item given affine lifts $\widetilde{x}, \widetilde{y}, \widetilde{x-y} \in \widetilde{A}$ of $x, y, x-y \in A$,
$$\chainadd(\widetilde{x}, \widetilde{y}, \widetilde{x-y})$$
is an algorithm that computes the affine lift $\widetilde{x+y}$ of $x+y$ so that $\widetilde{0}_A, \widetilde{x}, \widetilde{y}, \widetilde{x+y}, \widetilde{x-y}$ satisfy the Riemann relations \cite[Thm.4.4.6.]{drobert:thesis}.
\item given an integer $m \ge 1$ and affine lifts $\widetilde{x}, \widetilde{y}, \widetilde{x+y} \in \widetilde{A}$, 
$$\chainmultadd(m, \widetilde{x+y}, \widetilde{x}, \widetilde{y})$$
is an algorithm that computes an affine lift of $mx+y$. It is defined by recursive calls of $\chainadd$. If $m<0$ we set 
$$\chainmultadd(m, \widetilde{x+y}, \widetilde{x}, \widetilde{y}):=\chainmultadd(-m, -\widetilde{x+y}, -\widetilde{x}, -\widetilde{y}).$$
\item given an affine lift $\widetilde{x} \in \widetilde{A}$,
$$\chainmult(m,\widetilde{x}) := \chainmultadd(m, \widetilde{x}, \widetilde{x}, \widetilde{0}_A)$$
is an algorithm that computes an affine lift of $mx$. 
\end{itemize}

\begin{defn}
Suppose that $\widetilde{0}_A$ is a fixed affine lift of the theta null point of $A$ for $\Theta_{\cL}$. 
We call an affine lift $\widetilde{t}_e$ of $t \in G$ \emph{excellent} with respect to $(A, \cL, \Theta_{\cL}, \widetilde{0}_A)$ if 
\begin{equation}\label{eq:excellent}
\chainmult(m+1, \widetilde{t}_e) = -\chainmult(m, \widetilde{t}_e),  
\end{equation}
where $\ell = 2m +1$.
\end{defn}

Throughout this section, suppose that we have fixed a lift $\widetilde{0}_A$ of $0_A$. To compute an excellent lift of $t$, take any affine lift $\widetilde{t}$ and look for a scalar $\lambda_t \in \bar{k}^\times$ such that $\widetilde{t}_e = \lambda_t \cdot \widetilde{t}$ is excellent. Here, $\lambda_t \cdot \widetilde{t}$ is the affine point with $i$-th coordinate equal to $\lambda_t \cdot \theta^{\Theta_\cL}_i(\widetilde{t})$. Indeed, using that  
$$
\chainmult(m+1, \lambda_t \cdot \widetilde{t}) = \lambda_t^{(m+1)^2} \cdot \chainmult(m+1, \widetilde{t}) 
$$  
and 
$$
\chainmult(m, \lambda_t \cdot \widetilde{t}) = \lambda_t^{m^2} \cdot \chainmult(m, \widetilde{t}), \bar{k}
$$
we obtain that $\widetilde{t}_e = \lambda_t \cdot \widetilde{t}$ will be excellent if 
$$
\lambda_t^\ell \cdot \chainmult(m+1, \widetilde{t}) = -\chainmult(m, \widetilde{t}). 
$$
This determines $\lambda_t^\ell$ precisely; yet, one still has to take an $\ell$-th root of unity, thus introducing some ambiguity in the choice of the affine lift. 

We will show that if we fix a lift $\widetilde{0}_A$, then the \textbf{right} lift $\widetilde{t}_{\text{right}}$ is an excellent lift, and for all $u = 2, \dots ,\ell-1$, we have $\widetilde{ut}_{\text{right}} = \chainmult(u,\widetilde{t}_{\text{right}})$.

\paragraph{The \textbf{right} lifts are excellent.}\label{par:right_is_excellent} Let us first prove the following lemma about the compatibility of chain operations with the action of the Heisenberg group. Suppose that we have fixed an affine lift $\widetilde{0}_B$ of $0_B$.
\begin{lem}\label{lem:compatibility}
Let $i \in \Z(\delta_{\cM^\beta})$ and $k \in \widehat{\Z}(\delta_{\cM^\beta})$ and let $m \in \Z_{\ge 0}$. Then, for $\widetilde{y} \in \widetilde{B}$ we have
$$\chainmult(m, (1,i,k)\cdot \widetilde{y}) = (1, mi, mk)\cdot \chainmult(m, \widetilde{y}).$$
Moreover, for $\widetilde{y'} \in \widetilde{B}$
$$
\chainmultadd(m, (1,i,k)\cdot \widetilde{y+y'}, (1,i,k)\cdot \widetilde{y'}, \widetilde{y}) = (1,mi,mk)\cdot \chainmultadd(m,\widetilde{y+y'}, \widetilde{y'}, \widetilde{y}).
$$
\end{lem}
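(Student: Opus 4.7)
My plan is to prove both identities by induction on $m$, reducing the whole statement to a single equivariance property of the primitive operation $\chainadd$ under the Heisenberg action. The base cases $m = 0$ and $m = 1$ are immediate: $\chainmult(0, \cdot) = \widetilde{0}_B$ and $(1, 0, 0)$ acts trivially on $\widetilde{0}_B$, while $m = 1$ is tautological. The case of $\chainmultadd$ will reduce to $\chainmult$ by the same mechanism, using the recursive definition of $\chainmultadd$ in terms of $\chainadd$.

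The key sublemma I would establish first is the following compatibility for $\chainadd$: given lifts $\widetilde{x}, \widetilde{y}, \widetilde{x-y} \in \widetilde{B}$ and pairs $(i_1, k_1), (i_2, k_2) \in \Z(\delta_{\cM^\beta}) \times \widehat{\Z}(\delta_{\cM^\beta})$,
$$\chainadd\bigl((1, i_1, k_1)\cdot\widetilde{x},\ (1, i_2, k_2)\cdot\widetilde{y},\ (1, i_1 - i_2, k_1 - k_2)\cdot\widetilde{x-y}\bigr) = (1, i_1 + i_2, k_1 + k_2)\cdot \chainadd(\widetilde{x}, \widetilde{y}, \widetilde{x-y}).$$
To prove this, I would expand both sides using the explicit Riemann relations from \cite[Thm.4.4.6]{drobert:thesis} that characterize the output of $\chainadd$, combined with the explicit formula \eqref{eq:actionHeisenberg} for the Heisenberg action on theta coordinates. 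The verification is a direct computation in which the character values $k_j(\cdot)$ and the index shifts by $i_j$ match on both sides. Conceptually, $\chainadd$ is intrinsically tied to the action of $\cH(\delta_{\cM^\beta})$ on the affine cone $\widetilde{B}$, so such an equivariance is essentially built into its construction.

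With the sublemma in hand, the inductive step for $\chainmult$ uses the standard recursive formulas, namely a doubling step of the form $\chainmult(2m, \widetilde{y}) = \chainadd(\chainmult(m,\widetilde{y}), \chainmult(m,\widetilde{y}), \widetilde{0}_B)$ and a neighbour step $\chainmult(2m+1, \widetilde{y}) = \chainadd(\chainmult(m+1,\widetilde{y}), \chainmult(m,\widetilde{y}), \widetilde{y})$ from \cite[\S 4.4]{drobert:thesis}. By the inductive hypothesis each sub-call produces the Heisenberg translate with the appropriate multiple of $(i, k)$, and the sublemma assembles these into the combined translate $(1, mi, mk) \cdot \chainmult(m,\widetilde{y})$ at the current step. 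The $\chainmultadd$ identity is obtained by the same argument applied to its analogous recursive decomposition, noting that the last argument $\widetilde{y}$ is not translated, which is consistent with the $(1, mi, mk)$ Heisenberg parameter on the output because only $\widetilde{y+y'}$ and $\widetilde{y'}$ (which carry the translation) are affected.

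I expect the main obstacle to be the careful bookkeeping of the character factors arising from \eqref{eq:actionHeisenberg}: one must check that in the sublemma the ``cross'' terms coming from $k_1$ acting on coordinates indexed by translates of $i_2$, and $k_2$ on translates of $i_1$, combine through the Riemann relations into a clean $(1, i_1 + i_2, k_1 + k_2)\cdot (\cdot)$ on the output, with no leftover commutator factor $e_{\delta_{\cM^\beta}}((i_1,k_1),(i_2,k_2))$. This cancellation is precisely the reason the lemma must be stated using the canonical set-theoretic section $(i, k) \mapsto (1, i, k)$ rather than the $m$-th Heisenberg power of $(1, i, k)$, which would contribute additional pairing scalars $k(i)^{\binom{m}{2}}$. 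Once this combinatorial identity in the duality pairing is verified, the induction closes and both parts of the lemma follow.
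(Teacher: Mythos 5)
Your proposal is correct and takes essentially the same approach as the paper: induction on $m$, reducing everything to the equivariance of $\chainadd$ under the Heisenberg action. The paper simply cites this equivariance from Prop.~3.11 of \cite{lubicz-robert:isogenies} rather than reproving it, and uses the linear recursion $\chainmult(n,\widetilde{y}) = \chainadd(\chainmult(n-1,\widetilde{y}),\widetilde{y},\chainmult(n-2,\widetilde{y}))$ instead of your binary doubling/neighbour recursion, but these are immaterial variants.
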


\begin{proof}
We first recall the following: for $n \in \Z_{\ge 0 }$ and $\widetilde{x} \in \widetilde{B}$, by definition
$$\chainmult(n, \widetilde{x}) := \chainmultadd(n, \widetilde{x}, \widetilde{x}, \widetilde{0}_B)$$
and we compute $\chainmultadd(n, \widetilde{x}, \widetilde{x}, \widetilde{0}_B)$ recursively as
$$\chainmultadd(n, \widetilde{x}, \widetilde{x}, \widetilde{0}_B) := \chainadd(\chainmultadd(n-1, \widetilde{x}, \widetilde{x}, \widetilde{0}_B), \widetilde{x}, \chainmultadd(n-2, \widetilde{x}, \widetilde{x}, \widetilde{0}_B)).$$
It follows that
$$\chainmult(n, \widetilde{x}) = \chainadd(\chainmult(n-1, \widetilde{x}), \widetilde{x}, \chainmult(n-2, \widetilde{x})).$$
The proof is by induction on $m$. For $m = 1$ the statement is precisely \cite[Prop.3.11]{lubicz-robert:isogenies}. Assume that the statement is true for all $n \leq m$ and write 
\begin{small}
\begin{align*}
& \chainmult(m + 1, (1,i,k)\cdot \widetilde{y}) \\
& \stackrel{\text{by def.}}{=}  \chainadd( \chainmult(m, (1,i,k)\cdot \widetilde{y}), (1,i,k)\cdot \widetilde{y}, \chainmult(m-1, (1,i,k)\cdot \widetilde{y}))  \\
& \stackrel{\text{by ind.}}{=}  \chainadd( (1, mi, mk) \cdot \chainmult(m, \widetilde{y}), (1,i,k)\cdot \widetilde{y}, (1, (m-1) i, (m-1) k) \cdot \chainmult(m-1, \widetilde{y}))  \\
& \stackrel{\tiny \cite[3.11]{lubicz-robert:isogenies}}{=} (1, (m+1) i, (m+1) k) \cdot \chainadd( \chainmult(m, \widetilde{y}), \widetilde{y}, \chainmult(m-1, \widetilde{y})) \\
& \stackrel{\text{by def.}}{=} (1, (m+1) i, (m+1) k) \cdot \chainmult(m+1, \widetilde{y}). 
\end{align*}
\end{small}
This proves the induction hypothesis. The proof for $\chainmultadd$ is similar except that we use 
\begin{small}
\begin{align*}
& \chainmultadd(m+1, \widetilde{y + y'}, \widetilde{y'}, \widetilde{y}) \\ 
& = \chainadd(\chainmultadd(m, \widetilde{y + y'}, \widetilde{y'}, \widetilde{y}), \widetilde{y'}, \chainmultadd(m-1, \widetilde{y + y'}, \widetilde{y'}, \widetilde{y})).
\end{align*}
\end{small}
\end{proof}
Now, using Lemma \ref{lem:compatibility} from above, and Lemma 3.9 and Corollary 3.17 of \cite{lubicz-robert:isogenies}, we can show the following key result.

\begin{prop}\label{prop:excellentlift}
Let $\widetilde{0}_A$ be a fixed affine lift of $0_A$. Then the \textbf{right} lift $\widetilde{t}_{\text{right}}$ is an excellent lift of~$t$, and for $2 \le u \le \ell-1$ we have
$$\widetilde{ut}_{\text{right}} = \chainmult(u, \widetilde{t}_{\text{right}}).$$
It follows that $\widetilde{2t}_{\text{right}}, \dots , \widetilde{(\ell-1)t}_{\text{right}}$ are excellent lifts too.
\end{prop}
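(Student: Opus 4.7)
The plan is to show that applying $\chainmult(u,\cdot)$ to the \textbf{right} lift $\widetilde{t}_{\text{right}}$ yields precisely $\widetilde{ut}_{\text{right}}$, and then to exploit the $[-1]$-symmetry of the totally symmetric line bundle $\cM^\beta$ together with the fact that $(m+1)t=-mt$ (as $\ell t=0$) to deduce the excellence relation.

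First I would establish the compatibility
$\chainmult(u,\widetilde{\widehat{f}}(\widetilde{y})) = \widetilde{\widehat{f}}(\chainmult(u,\widetilde{y}))$
for our chosen affine lift $\widetilde{\widehat{f}}=\widetilde{\widehat{f}}_1$. This is essentially \cite[Lem.3.9]{lubicz-robert:isogenies}: an affine lift of an isogeny normalized to scale $1$ intertwines the Riemann-relation-based chain operations. Combining this with Lemma~\ref{lem:compatibility} and the trivial observation $\chainmult(u,\widetilde{0}_B)=\widetilde{0}_B$ (which follows by induction from $\chainadd(\widetilde{0}_B,\widetilde{0}_B,\widetilde{0}_B)=\widetilde{0}_B$) gives
\begin{align*}
\chainmult(u,\widetilde{t}_{\text{right}})
&= \chainmult\bigl(u,\widetilde{\widehat{f}}((1,i_{t'},0)\cdot\widetilde{0}_B)\bigr)
= \widetilde{\widehat{f}}\bigl(\chainmult(u,(1,i_{t'},0)\cdot\widetilde{0}_B)\bigr) \\
&= \widetilde{\widehat{f}}\bigl((1,u i_{t'},0)\cdot\chainmult(u,\widetilde{0}_B)\bigr)
= \widetilde{\widehat{f}}\bigl((1,u i_{t'},0)\cdot\widetilde{0}_B\bigr)
= \widetilde{ut}_{\text{right}}.
\end{align*}

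Next, for excellence, note that $(m+1)i_{t'}=-m\,i_{t'}$ in $\Z(\delta_{\cM^\beta})$ since $\ell\,i_{t'}=0$. For the totally symmetric line bundle $\cM^\beta$ with symmetric theta structure $\Theta_{\cM^\beta}$, the identity $\theta_{-j}^{\Theta_{\cM^\beta}}(\widetilde{0}_B)=\theta_{j}^{\Theta_{\cM^\beta}}(\widetilde{0}_B)$ holds for any affine lift of $0_B$ (essentially \cite[Cor.3.17]{lubicz-robert:isogenies}, a direct consequence of the Heisenberg description of $[-1]^\ast$ on theta coordinates). Using \eqref{eq:actionHeisenberg}, a coordinate-wise computation then gives
\[
(1,(m+1) i_{t'},0)\cdot\widetilde{0}_B \;=\; -\bigl((1,m i_{t'},0)\cdot\widetilde{0}_B\bigr),
\]
where $-\widetilde{x}$ denotes the coordinate-level negation $\theta_j\mapsto\theta_{-j}$. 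Since $\widetilde{\widehat{f}}$ also commutes with this negation (because the symmetric theta structures on source and target are compatible), applying $\widetilde{\widehat{f}}$ yields $\widetilde{(m+1)t}_{\text{right}} = -\widetilde{mt}_{\text{right}}$, which combined with the previous step is exactly $\chainmult(m+1,\widetilde{t}_{\text{right}}) = -\chainmult(m,\widetilde{t}_{\text{right}})$.

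For the last assertion, the same argument applies with $i_{t'}$ replaced by $u i_{t'}$ (which still satisfies $\ell\cdot u i_{t'}=0$): it first gives $\chainmult(k,\widetilde{ut}_{\text{right}})=\widetilde{kut}_{\text{right}}$, and then the identity $(1,(m+1) u i_{t'},0)\cdot\widetilde{0}_B = -((1,m u i_{t'},0)\cdot\widetilde{0}_B)$ yields excellence for $\widetilde{ut}_{\text{right}}$. The main subtlety I expect is the bookkeeping of sign conventions: one must verify that the coordinate-level negation used to define excellent lifts genuinely coincides with the $[-1]^\ast$-action induced by the symmetric theta structures, and that this action is compatible with both $\widetilde{\widehat{f}}$ and the Heisenberg action with no parasitic scalars creeping in. Once those compatibilities are nailed down, the remainder is the short calculation above.
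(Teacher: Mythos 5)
Your proposal is correct and follows essentially the same route as the paper's proof: it rests on the same three ingredients, namely Lemma~\ref{lem:compatibility} (compatibility of the Heisenberg action with chain operations), the compatibility of $\widetilde{\widehat{f}}$ with $\chainmult$, and the symmetry $\theta_{-j}(\widetilde{0}_B)=\theta_j(\widetilde{0}_B)$ of the theta null point. The only difference is a small restructuring -- you prove $\chainmult(u,\widetilde{t}_{\mathrm{right}}) = \widetilde{ut}_{\mathrm{right}}$ first and then read off excellence from the negation identity, whereas the paper establishes excellence directly by a single chain of equalities and deduces $\widetilde{ut}_{\mathrm{right}} = \chainmult(u,\widetilde{t}_{\mathrm{right}})$ ``in a similar way'' -- and you spell out the negation step $(1,(m+1)i_{t'},0)\cdot\widetilde{0}_B = -\bigl((1,mi_{t'},0)\cdot\widetilde{0}_B\bigr)$ coordinatewise rather than quoting \cite[Lem.~3.9]{lubicz-robert:isogenies} as the paper does.
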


\begin{proof}
Let $\widetilde{0}_B$ be the lift of $0_B$ induced by $\widetilde{0}_A$ and $\widetilde{\widehat{f}}$ (c.f. Section \ref{par:affine_isogthm}). Let 
$$\widetilde{t'} = (1, i_{t'}, 0) \cdot \widetilde{0}_B, \dots , \widetilde{(\ell-1)t'} = (1, (\ell-1)i_{t'}, 0) \cdot \widetilde{0}_B$$
be the lifts of $t' , \dots , (\ell-1)t'$ induced by the action of the Heisenberg group on $\widetilde{B}$. Let $\ell = 2m +1$. Observe that $(1,(m+1)i_{t'}, 0) \cdot \widetilde{0}_B = (1, -m i_{t'}, 0) \cdot \widetilde{0}_B$, which follows from $\ell i_{t'}=0$. Now,

\begin{align*}
\chainmult(m+1, \widetilde{t}_{\text{right}}) 
&= \chainmult(m+1, \widetilde{\widehat{f}}(\widetilde{t'}))\\
&= \widetilde{\widehat{f}}(\chainmult(m+1, \widetilde{t'})) \hspace{4mm} \text{by } \cite[3.17]{lubicz-robert:isogenies}\\
&= \widetilde{\widehat{f}}(\chainmult(m+1, (1,i_{t'},0) \cdot \widetilde{0}_B))\\
&= \widetilde{\widehat{f}}((1,(m+1)i_{t'},0) \cdot \chainmult(m+1, \widetilde{0}_B)) \hspace{4mm} \text{by Lemma } \ref{lem:compatibility}\\
&= \widetilde{\widehat{f}}((1,-m i_{t'},0) \cdot \widetilde{0}_B)\\
&= \widetilde{\widehat{f}}(-(1,m i_{t'},0) \cdot \widetilde{0}_B) \hspace{4mm} \text{by } \cite[3.9]{lubicz-robert:isogenies}\\
&= \widetilde{\widehat{f}}(-(1,m i_{t'},0) \cdot \chainmult(m, \widetilde{0}_B)) \\
&= \widetilde{\widehat{f}}(-\chainmult(m,(1,i_{t'},0) \cdot \widetilde{0}_B)) \hspace{4mm} \text{by Lemma }\ref{lem:compatibility}\\
&= \widetilde{\widehat{f}}(-\chainmult(m, \widetilde{t'}))\\
&= -\chainmult(m, \widetilde{\widehat{f}}(\widetilde{t'})) \hspace{4mm} \text{by } \cite[3.9]{lubicz-robert:isogenies} \text{ and } \cite[3.17]{lubicz-robert:isogenies}\\
&= -\chainmult(m,\widetilde{t}_{\text{right}}),
\end{align*}
and therefore $\widetilde{t}_{\text{right}}$ is an excellent lift. The equality $\widetilde{ut}_{\text{right}} = \chainmult(u, \widetilde{t}_{\text{right}})$ is shown in a similar way, using that $\widetilde{ut}_{\text{right}}= \widetilde{\widehat{f}}((1,ui_{t'},0), \widetilde{0}_B)$.
\end{proof}

Given the input $\left( \theta_i ^{\Theta_\cL} (0_A) \right)_{i \in K_1(\cL)}$ and $\left( \theta_i ^{\Theta_\cL} (t) \right)_{i \in K_1(\cL)}$ of the algorithm, when fixing an affine lift $\widetilde{0}_A$ of $0_A$, we can compute an excellent lift $\widetilde{t}$ of $t$ for $\widetilde{0}_A$. This lift will differ from $\widetilde{t}_{\text{right}}$ by an $\ell$-th root of unity $\zeta_t$, i.e., $\widetilde{t} = \zeta_t \cdot \widetilde{t}_{\text{right}}$. By Proposition \ref{prop:excellentlift}, the lift $\widetilde{ut} = \chainmult(u, \widetilde{t})$ of $ut$, for $2 \le u \le \ell-1$, will differ from the \textbf{right} lift of $ut$ by $\zeta_t^{u^2}$. When fixing an excellent lift of $t$ we commit an error of an $\ell$-th root of unity, compared to fixing the \textbf{right} lift. We will show that this ambiguity does not affect the computation of the theta null point of $B^r$ for $\widetilde{\Theta}_{\cM^{\star r}}$.

\paragraph{Independence of the choice of an excellent lift of $t$.}\label{par:indep_of_excellent_lift}
We prove the case $r=4$. The case $r=2$ is easier and can be proven in a similar way. 

It is not hard to show that 
\begin{equation}\label{eq:kerFcapG4}
\ker(F) \cap G^4 = \{ F^t(t_1, t_2, 0, 0) \colon t_1, t_2 \in G\},  
\end{equation} 
where the endomorphism $F^t$ is represented by the matrix 
$$
F^t=
\begin{pmatrix}
\alpha_1&-\alpha_2&-\alpha_3&-\alpha_4\\
\alpha_2&\alpha_1&\alpha_4&-\alpha_3\\
\alpha_3&-\alpha_4&\alpha_1&\alpha_2\\
\alpha_4&\alpha_3&-\alpha_2&\alpha_1
\end{pmatrix}.  
$$
Fix a lift $\widetilde{0}_A$ of $0_A$ and fix an excellent lift $\widetilde{t}$ of $t$ that differs from the \textbf{right} lift by an $\ell$-th root of unity $\zeta_t$. Fix lifts $\widetilde{2t} = \chainmult(2, \widetilde{t}), \dots , \widetilde{(\ell-1)t} = \chainmult(\ell-1, \widetilde{t})$ of $2t, \dots , (\ell-1)t$. As we have seen, these lifts differ from the \textbf{right} lifts by $\zeta_t^{2^2}, \dots , \zeta_t^{(\ell-1)^2}$ respectively.  Fix $\mathbf{k} \in K_1(\cM^{\star 4})$ and consider the sum
\begin{equation}\label{eq:sum}
\sum_{\substack{\mathbf{t}'' \in K_1((\cM^\beta)^{\star 4})[\beta] \\ F(\mathbf{t}'') = 0}}
\prod_{s=1}^4 \theta^{\Theta_{\cM^\beta}}_{j_s+t''_s} (0_{B}),
\end{equation}
where $\mathbf{j} \in K_1((\cM^{\beta})^{\star 4})[n]$ is the unique index such that $F(\mathbf{j}) = \mathbf{k}$.

In order to evaluate the above sum, we have to make sense of the terms we substitute for each $\theta^{\Theta_{\cM^\beta}}_{j_s+t''_s} (0_{B})$. Let $t' \in K_1(\cM^\beta)[\beta] \subset B$ be the unique preimage of $t$ under $\widehat{f}$. Let $u_s$ be such that $t''_s = u_s t'$, then we evaluate \eqref{eq:sum} by substituting $\theta^{\Theta_{\cL}}_{\widehat{f}(j_s)}(\widetilde{u_st})$ for $\theta^{\Theta_{\cM^\beta}}_{j_s + u_st'}(0_B)$.

Note that since $\widetilde{t}$ is an excellent lift, we have that $\widetilde{0}_A = \chainmult(\ell,\widetilde{t})$, so that the lifts $\widetilde{0t}$ and~$\widetilde{0}_A$ are the same.

Recall that the kernel $\widehat{G}$ of $\widehat{f}$ equals $K_2(\cM^\beta)[\beta]$, so that $\widehat{f}$ induces an isomorphism of $K_1(\cM^\beta)[\beta]$ onto $\widehat{f}(K_1(\cM^\beta)[\beta]) = G$, the kernel of $f$. Hence, looping over~$\mathbf{t}'' = (t_1'', \dots , t_4'')\in K_1((\cM^\beta)^{\star 4})[\beta] \cap \ker (F)$ for computing $\displaystyle \prod_{s=1}^4 \theta^{\Theta_{\cM^\beta}}_{j_s+u_st'} (0_B) = \prod_{s=1}^4 \theta^{\Theta_{\cL}}_{\widehat{f}(j_s)} (\widetilde{u_st})$ (again, writing $t''_s = u_s t'$) is equivalent to looping over $\mathbf{t} = (t_1, \dots, t_4) \in G^4 \cap \ker (F)$ and computing $\displaystyle \prod_{s=1}^4 \theta^{\Theta_{\cL}}_{\widehat{f}(j_s)} (\widetilde{t_s})$. Therefore, using \eqref{eq:kerFcapG4}, the sum \eqref{eq:sum} becomes

\begin{equation}
\sum_{t_1,t_2 \in G}\theta^{\Theta_{\cL}}_{i_1} (\widetilde{\alpha_1 t_1 -\alpha_2 t_2})\theta^{\Theta_{\cL}}_{i_2} (\widetilde{\alpha_2 t_1 +\alpha_1 t_2}) 
\theta^{\Theta_{\cL}}_{i_3} (\widetilde{\alpha_3 t_1 -\alpha_4 t_2})  \theta^{\Theta_{\cL}}_{i_4} (\widetilde{\alpha_4 t_1 +\alpha_3 t_2}), 
\end{equation}
where $i_s=\widehat f(j_s)\in K_1(\cL)$. Writing the action of $\alpha_i$ on $t$ as multiplication by the scalar $a_i$  (considered modulo $\ell$) and writing $t_1 = u_1t$ and $t_2 = u_2t$, with $0 \le u_1, u_2 \le \ell-1$, we are reduced to compute the sum

\begin{equation}\label{eq:0_r=4}
\sum_{0\le u_1, u_2 \le \ell-1}\theta^{\Theta_{\cL}}_{i_1} (\widetilde{(a_1u_1-a_2u_2)t})\theta^{\Theta_{\cL}}_{i_2} (\widetilde{(a_2u_1+a_1u_2)t}) 
\theta^{\Theta_{\cL}}_{i_3} (\widetilde{(a_3u_1-a_4u_2)t})  \theta^{\Theta_{\cL}}_{i_4} (\widetilde{(a_4u_1 + a_3u_2)t}).
\end{equation}
We know that if we compute \eqref{eq:sum} as 
\begin{equation}\label{eq:0_r=4_right}
\sum_{0\le u_1, u_2 \le \ell-1}\theta^{\Theta_{\cL}}_{i_1} (\widetilde{(a_1u_1-a_2u_2)t}_{\text{right}})\theta^{\Theta_{\cL}}_{i_2} (\widetilde{(a_2u_1+a_1u_2)t}_{\text{right}}) 
\theta^{\Theta_{\cL}}_{i_3} (\widetilde{(a_3u_1-a_4u_2)t}_{\text{right}})  \theta^{\Theta_{\cL}}_{i_4} (\widetilde{(a_4u_1 + a_3u_2)t}_{\text{right}}),
\end{equation}
then we compute the theta null point of $B^4$ for $\widetilde{\Theta}_{\cM^{\star 4}}$ correctly. We will show that with our choice of lifts of $t, \dots , (\ell-1)t$, the sums \eqref{eq:0_r=4} and \eqref{eq:0_r=4_right} are the same.

\begin{lem}\label{lem:lambda}
Fix $u_1, u_2 \in \{ 0, \dots , \ell-1\}$. Then the terms 
$$\theta^{\Theta_{\cL}}_{i_1} (\widetilde{(a_1u_1-a_2u_2)t})\theta^{\Theta_{\cL}}_{i_2} (\widetilde{(a_2u_1+a_1u_2)t}) 
\theta^{\Theta_{\cL}}_{i_3} (\widetilde{(a_3u_1-a_4u_2)t})  \theta^{\Theta_{\cL}}_{i_4} (\widetilde{(a_4u_1 + a_3u_2)t}) \text{ and}$$
$$\theta^{\Theta_{\cL}}_{i_1} (\widetilde{(a_1u_1-a_2u_2)t}_{\text{right}})\theta^{\Theta_{\cL}}_{i_2} (\widetilde{(a_2u_1+a_1u_2)t}_{\text{right}}) 
\theta^{\Theta_{\cL}}_{i_3} (\widetilde{(a_3u_1-a_4u_2)t}_{\text{right}})  \theta^{\Theta_{\cL}}_{i_4} (\widetilde{(a_4u_1 + a_3u_2)t}_{\text{right}})$$
are the same. That is to say, fixing an excellent lift $\widetilde{t}$ of $t$ and substituting the coordinates of $\widetilde{0}_A, \widetilde{t}, \widetilde{2t} = \chainmult(2, \widetilde{t}), \dots , \widetilde{(\ell-1)t} = \chainmult(\ell-1, \widetilde{t})$ in the formula \eqref{eq:sum}, we correctly compute an affine lift of the theta null point of $B^4$ for $\widetilde{\Theta}_{\cM^{\star 4}}$.
\end{lem}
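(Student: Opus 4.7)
The plan is to track carefully the ambiguity introduced by replacing the \textbf{right} lifts by the excellent lifts $\chainmult(u,\widetilde{t})$, and to show that this ambiguity cancels in each fixed summand of \eqref{eq:0_r=4}. First, I would record the basic scaling rule for the chain operations: if $\lambda\in\overline{k}^\times$ then
\[
\chainmult(u,\lambda\cdot \widetilde{t})=\lambda^{u^2}\cdot\chainmult(u,\widetilde{t}),
\]
which follows from the quadratic homogeneity already used to define excellent lifts. Since by construction $\widetilde{t}=\zeta_t\cdot\widetilde{t}_{\text{right}}$ for some $\ell$th root of unity $\zeta_t$, this rule yields
\[
\widetilde{ut}=\chainmult(u,\widetilde{t})=\zeta_t^{u^2}\cdot\widetilde{ut}_{\text{right}}
\]
for every integer $u$ (where for $u=0$ we recover $\widetilde{0}_A$ unchanged since $\zeta_t^0=1$).

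Next I would substitute this into each summand of \eqref{eq:0_r=4}, indexed by a fixed pair $(u_1,u_2)\in\{0,\dots,\ell-1\}^2$. Writing $c_1=a_1u_1-a_2u_2$, $c_2=a_2u_1+a_1u_2$, $c_3=a_3u_1-a_4u_2$, $c_4=a_4u_1+a_3u_2$, the summand of \eqref{eq:0_r=4} differs from the corresponding summand of \eqref{eq:0_r=4_right} by the scalar
\[
\zeta_t^{c_1^2+c_2^2+c_3^2+c_4^2}.
\]
A direct calculation gives
\[
c_1^2+c_2^2+c_3^2+c_4^2=(a_1^2+a_2^2+a_3^2+a_4^2)(u_1^2+u_2^2),
\]
using only the identity $(au-bv)^2+(bu+av)^2=(a^2+b^2)(u^2+v^2)$ applied to the pairs $(a_1,a_2)$ and $(a_3,a_4)$.

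The final and key step is to observe that $a_1^2+a_2^2+a_3^2+a_4^2\equiv 0\pmod{\ell}$. This is because $\beta=\alpha_1^2+\alpha_2^2+\alpha_3^2+\alpha_4^2$ as a real endomorphism of $A$, and $t\in G\subset A[\beta]$, so the action of $\beta$ on $t$ (which on the cyclic group $\langle t\rangle$ is multiplication by $a_1^2+a_2^2+a_3^2+a_4^2\pmod{\ell}$) vanishes. Since $\zeta_t^\ell=1$, the discrepancy $\zeta_t^{(a_1^2+a_2^2+a_3^2+a_4^2)(u_1^2+u_2^2)}$ is equal to $1$ for every $(u_1,u_2)$, and therefore \eqref{eq:0_r=4} equals \eqref{eq:0_r=4_right} term by term. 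The latter is known to produce the correct affine lift of the theta null point of $B^4$ for $\widetilde{\Theta}_{\cM^{\star 4}}$, which proves the lemma. The only subtle point, which I would expect to be the main obstacle to verify rigorously, is the clean identification of the quadratic scaling factor $\zeta_t^{u^2}$ for lifts produced by $\chainmult$, but this is already encoded in the discussion preceding the statement.
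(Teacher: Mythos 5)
Your argument is correct and follows the same route as the paper's own proof: identify the discrepancy as $\zeta_t$ raised to the sum of four squares, factor it as $(a_1^2+\cdots+a_4^2)(u_1^2+u_2^2)$ via the two-square identity, and conclude since $a_1^2+\cdots+a_4^2\equiv 0\pmod\ell$ because it is the scalar by which $\beta$ acts on $t\in A[\beta]$. Your explicit preamble on the quadratic scaling rule for $\chainmult$ is a helpful unpacking, but the substance is identical to the paper's argument.
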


\begin{proof}
The two terms differ by
$$\zeta_t^{(a_1u_1-a_2u_2)^2 + (a_2u_1+a_1u_2)^2 + (a_3u_1-a_4u_2)^2 + (a_4u_1 + a_3u_2)^2}.$$
But
$$(a_1u_1-a_2u_2)^2 + (a_2u_1+a_1u_2)^2 + (a_3u_1-a_4u_2)^2 + (a_4u_1 + a_3u_2)^2 = (a_1^2+ \cdots+ a_4^2)( u_1^2+ u_2^2)$$
and $(a_1^2+ \cdots + a_4^2)$ is a multiple of $\ell$, since it is given by the scalar of the action of $\beta = \alpha_1^2 + \cdots + \alpha_4^2$ on $t$.
\end{proof}

\subsection{Modification of $\widetilde{\Theta}_{\cM^{\star r}}$ on $(B^r, \cM^{\star r})$ via a metaplectic isomorphism}
\label{subsec:metaplect}
The theta null point for the symmetric theta structure $\widetilde{\Theta}_{\cM^{\star r}}$ on $\cM^{\star r}$ from Section~\ref{subsec:complifts} does not automatically recover the theta null point for 
$(B, \cM)$. It would do so if $\widetilde{\Theta}_{\cM^{\star r}}$ were of the form $\Theta_{\cM} \star \Theta_{\cM^{\star (r-1)}}$ 
for theta structures $\Theta_{\cM}$ and $\Theta_{\cM^{\star (r-1)}}$ on 
$(B, \cM)$ and $(B^{r-1}, \cM^{\star (r-1)})$ respectively. 

In order to obtain information about a single polarized factor $(B, \cM)$, we need to modify 
$\widetilde{\Theta}_{\cM^{\star r}}$ via a suitably chosen metaplectic automorphism (an automorphism of the corresponding Heisenberg group) so that it has the above form. We explain how to do that now.

\paragraph{Transforming $\widetilde{\Theta}_{\cM^{\star r}}$ to a product theta structure via a metaplectic automorphism.}

\begin{lem}\label{lem:exist}
There exists a metaplectic automorphism $M \in \Aut_{k^\times}(\cH(\delta_\cM^{\star r}))$ such that the theta structure 
$\widetilde{\Theta}_{\cM^{\star r}} \circ M$ is a product theta structure. 
\end{lem}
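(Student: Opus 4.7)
The plan is to reduce the problem to the level of the underlying symplectic $K$-groups via Lemma~\ref{lem:productthetastructure}, and then use the fact that any symplectic automorphism of $K(\delta)$ lifts to a $k^\times$-equivariant (``metaplectic'') automorphism of $\cH(\delta)$.

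First I would fix a symplectic isomorphism of product form. Since $\cM$ is a polarization of type $\delta_\cM$, there exists a symplectic isomorphism $\phi_0 \colon K(\delta_\cM) \xra{\sim} K(\cM)$. Under the natural identifications $K(\delta_\cM^{\star r}) \isom K(\delta_\cM)^r$ and $K(\cM^{\star r}) \isom K(\cM)^r$, the $r$-fold product $\phi := \phi_0^{\star r}$ is a symplectic isomorphism $K(\delta_\cM^{\star r}) \xra{\sim} K(\cM^{\star r})$ that is of product form in the sense of Lemma~\ref{lem:productthetastructure}.

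Next, let $\overline{\widetilde{\Theta}}_{\cM^{\star r}} \colon K(\delta_\cM^{\star r}) \xra{\sim} K(\cM^{\star r})$ be the symplectic isomorphism underlying $\widetilde{\Theta}_{\cM^{\star r}}$, and set
$$
\overline{M} \;:=\; \overline{\widetilde{\Theta}}_{\cM^{\star r}}^{-1} \circ \phi \;\in\; \Sp\bigl(K(\delta_\cM^{\star r})\bigr).
$$
The key step is to lift $\overline{M}$ to an automorphism $M \in \Aut_{k^\times}(\cH(\delta_\cM^{\star r}))$ sitting in a commutative diagram of central extensions
$$
\xymatrix{
0 \ar[r] & k^\times \ar[r] \ar[d]^{\id} & \cH(\delta_\cM^{\star r}) \ar[r] \ar[d]^{M} & K(\delta_\cM^{\star r}) \ar[r] \ar[d]^{\overline{M}} & 0 \\
0 \ar[r] & k^\times \ar[r] & \cH(\delta_\cM^{\star r}) \ar[r] & K(\delta_\cM^{\star r}) \ar[r] & 0.
}
$$
The existence of such a lift is classical: the central extension is classified (up to isomorphism fixing $k^\times$) by the commutator pairing on $K(\delta_\cM^{\star r})$, and $\overline{M}$ preserves this pairing by assumption. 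Concretely, one can construct $M$ by choosing group-theoretic sections of the projection $\cH(\delta_\cM^{\star r}) \to K(\delta_\cM^{\star r})$ over the two maximal isotropic subgroups $K_1(\delta_\cM^{\star r})$ and $\overline{M}^{-1}(K_2(\delta_\cM^{\star r}))$ — such sections exist by \cite[Prop.~3.2.6]{drobert:thesis} — and extending uniquely (up to a choice of scalar) so as to be $k^\times$-equivariant and to pull commutators back to commutators.

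Finally, the composition $\widetilde{\Theta}_{\cM^{\star r}} \circ M$ is a theta structure on $(B^r, \cM^{\star r})$ whose induced symplectic isomorphism on $K$-groups is
$$
\overline{\widetilde{\Theta}}_{\cM^{\star r}} \circ \overline{M} \;=\; \overline{\widetilde{\Theta}}_{\cM^{\star r}} \circ \overline{\widetilde{\Theta}}_{\cM^{\star r}}^{-1} \circ \phi \;=\; \phi,
$$
which is of product form. Lemma~\ref{lem:productthetastructure} then implies that $\widetilde{\Theta}_{\cM^{\star r}} \circ M$ is itself a product theta structure. The main obstacle is the lifting step above; everything else is essentially bookkeeping with the diagram.
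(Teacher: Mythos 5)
Your proof is correct, but it takes a longer route than the paper. You work at the level of the underlying symplectic $K$-groups: pick a product-form symplectic isomorphism $\phi$, form $\overline{M}=\overline{\widetilde{\Theta}}_{\cM^{\star r}}^{-1}\circ \phi$, lift it to a metaplectic automorphism $M$ (the step you correctly flag as the only real content), and then invoke Lemma~\ref{lem:productthetastructure} to deduce that $\widetilde{\Theta}_{\cM^{\star r}}\circ M$ is of product form. The paper short-circuits all of this: it notes that a (symmetric) theta structure $\Theta_\cM$ on $(B,\cM)$ exists, forms the $r$-fold product theta structure $\Theta_{\cM^{\star r}}$ directly, and defines $M:=\widetilde{\Theta}_{\cM^{\star r}}^{-1}\circ\Theta_{\cM^{\star r}}$. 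Since both $\widetilde{\Theta}_{\cM^{\star r}}$ and $\Theta_{\cM^{\star r}}$ are $k^\times$-equivariant isomorphisms $\cH(\delta_\cM^{\star r})\to\cG(\cM^{\star r})$, their composite is automatically in $\Aut_{k^\times}(\cH(\delta_\cM^{\star r}))$, and $\widetilde{\Theta}_{\cM^{\star r}}\circ M=\Theta_{\cM^{\star r}}$ is a product theta structure by construction. This avoids the lifting argument and makes no appeal to Lemma~\ref{lem:productthetastructure}. Your approach buys a more explicit picture of $\overline{M}$ as a symplectic matrix (which is in fact what Section~\ref{par:unfolding} needs to compute in practice), whereas the paper's proof is purely existential and more economical. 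Both are valid; the only thing to be careful about in your version is that the lift of $\overline{M}$ you construct from the two isotropic sections is only determined up to a scalar automorphism, but since any lift works for the conclusion, this ambiguity is harmless.
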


\begin{proof}
There exists a symmetric theta structure $\Theta_{\cM}$ on $(B, \cM)$. We can then form the ($r$-fold) product theta structure $\Theta_{\cM^{\star r}} = \Theta_{\cM} \star \dots \star \Theta_{\cM}$ on $(B^r, \cM^{\star r})$. Define $M := \widetilde{\Theta}_{\cM^{\star r}}^{-1} \circ \Theta_{\cM^{\star r}}$, which is clearly an element of $\Aut_{k^\times}(\cH(\delta_\cM^{\star r}))$ and satisfies the above property.  
\end{proof}

\paragraph{Explicit computation of a metaplectic automorphism $M$.}\label{par:unfolding}
Lemma~\ref{lem:exist} shows that $\widetilde{\Theta}_{\cM^{\star r}}$ can be transformed into a product theta structure via an automorphism $M \in \Aut_{k^\times}(\cH(\delta_\cM^{\star r}))$, but does not provide such an $M$. 
We will look for an automorphism $M$ that transforms $\widetilde{\Theta}_{\cM^{\star r}}$ into a theta structure of the form $\Theta_{\cM} \star \cdots \star \Theta_{\cM}$.
By Proposition~\ref{prop:symtheta}, to give a symmetric theta structure $\cH(\delta_\cM^{\star r}) \ra \cG(\cM^{\star r})$ it suffices to give a symplectic isomorphism $K(2 (\delta_\cM^{\star r})) \ra K((\cM^{\star r})^2)$. Observe that $K(2 (\delta_\cM^{\star r})) = K((2\delta_\cM)^{\star r}) = K(2\delta_\cM) \times \cdots \times K(2\delta_\cM)$ and that $K((\cM^{\star r})^2) = K((\cM^2)^{\star r}) = K(\cM^2) \times \cdots \times K(\cM^2)$. We have the following proposition.

\begin{prop}
Suppose that a symplectic isomorphism $K(2 (\delta_\cM^{\star r})) \ra K((\cM^{\star r})^2)$ is of product form. Then the induced symmetric theta structure $\cH(\delta_\cM^{\star r}) \ra \cG(\cM^{\star r})$ is also of product form.
\end{prop}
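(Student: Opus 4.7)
The plan is to combine Proposition~\ref{prop:symtheta} (which says a symmetric theta structure on a totally symmetric line bundle is equivalent data to a symplectic isomorphism at the squared level) with Lemma~\ref{lem:productthetastructure} (which characterizes product theta structures by their underlying symplectic isomorphism), and then to invoke the uniqueness part of Proposition~\ref{prop:symtheta} to identify two a priori distinct symmetric theta structures.

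First, I would unpack the hypothesis using the canonical identifications $K(2\delta_\cM^{\star r}) = K(2\delta_\cM)^r$ and $K((\cM^{\star r})^2) = K(\cM^2)^r$: a product symplectic isomorphism $\Phi \colon K(2\delta_\cM^{\star r}) \to K((\cM^{\star r})^2)$ decomposes as $\Phi = \Phi_1 \times \cdots \times \Phi_r$, each $\Phi_s \colon K(2\delta_\cM) \to K(\cM^2)$ being a symplectic isomorphism. Applying Proposition~\ref{prop:symtheta} to the factor $(B,\cM)$, each $\Phi_s$ determines a unique symmetric theta structure $\Theta_s \colon \cH(\delta_\cM) \to \cG(\cM)$. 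Form the product theta structure $\Theta^{\mathrm{prod}} := \Theta_1 \star \cdots \star \Theta_r \colon \cH(\delta_\cM^{\star r}) \to \cG(\cM^{\star r})$ as in Section~\ref{par:rfold}. By construction it is of product form, and it is symmetric because the involution $\gamma_{-1}$ acts factorwise on both $\cH(\delta_\cM^{\star r})$ and $\cG(\cM^{\star r})$ and each factor $\Theta_s$ is symmetric.

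It then remains to show that $\Theta^{\mathrm{prod}}$ agrees with the symmetric theta structure attached to $\Phi$ via Proposition~\ref{prop:symtheta}. The latter is characterized, uniquely, by the property that its associated symplectic isomorphism at the squared level equals $\Phi$. Since squaring is compatible with taking $r$-fold products (both at the level of line bundles, via $(\cM^{\star r})^2 \cong (\cM^2)^{\star r}$, and at the level of symplectic data via $K((\cM^{\star r})^2) = K(\cM^2)^r$), the squared-level symplectic isomorphism induced by $\Theta^{\mathrm{prod}}$ is the product of those induced by the $\Theta_s$, which by the defining compatibility of each $\Theta_s$ with $\Phi_s$ is precisely $\Phi_1 \times \cdots \times \Phi_r = \Phi$. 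Uniqueness in Proposition~\ref{prop:symtheta} then forces $\Theta^{\mathrm{prod}}$ to coincide with the symmetric theta structure of the proposition, proving it is of product form.

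The main obstacle is purely bookkeeping: one has to verify carefully that the operations "take the $r$-fold product" and "pass to the squared polarization" commute, both on the Heisenberg side (i.e.\ the natural map $\cH(\delta_\cM^{\star r}) \to \cH((2\delta_\cM)^{\star r})$ induced by a symmetric theta structure is compatible with the factor decomposition) and on the geometric side (via the canonical identification $(\cM^{\star r})^2 \cong (\cM^2)^{\star r}$ and the resulting identification of kernels of polarization isogenies). Once these functorial identifications are in place, the statement is a direct consequence of the uniqueness of symmetric theta structures lifting a given symplectic isomorphism at the squared level, as provided by Proposition~\ref{prop:symtheta}.
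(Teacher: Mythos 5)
Your route is genuinely different from the paper's, and in the end less economical. The paper observes that the product form of the given symplectic isomorphism at level $2\delta_\cM^{\star r}$ restricts, via the canonical inclusion $K(\delta_\cM^{\star r}) \hookrightarrow K(2\delta_\cM^{\star r})$, to product form at level $\delta_\cM^{\star r}$ (a purely set-theoretic observation, visibly compatible with the factor decompositions), and then immediately invokes Lemma~\ref{lem:productthetastructure}. No auxiliary theta structure needs to be constructed.

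Your construction-plus-uniqueness argument can be made to work, but it has two gaps. First, a small imprecision: ``product form'' in the sense of Section~\ref{par:rfold} and Lemma~\ref{lem:productthetastructure} requires all $r$ factors to coincide; you write $\Phi = \Phi_1 \times \cdots \times \Phi_r$ and $\Theta_1 \star \cdots \star \Theta_r$ with a priori distinct factors, but if the $\Phi_s$ really differed the resulting $\Theta^{\mathrm{prod}}$ would \emph{not} be an $r$-fold product theta structure. You need $\Phi_1 = \cdots = \Phi_r$ and hence $\Theta_1 = \cdots = \Theta_r$. Second and more substantively, the key identification — that the squared-level symplectic isomorphism attached to $\Theta^{\mathrm{prod}} = \Theta_0^{\star r}$ equals $\Phi_0^{\times r}$ — is exactly what you flag as ``the main obstacle'' and then wave off as bookkeeping without carrying it out. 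What is actually required is that the correspondence of Proposition~\ref{prop:symtheta} (between level-$2\delta$ symplectic isomorphisms and symmetric theta structures) intertwines $r$-fold products on both sides; this is a symmetric-level analogue of Lemma~\ref{lem:productthetastructure} and needs its own verification. In effect your proof re-derives the lemma's content in a slightly different guise, whereas the paper's restriction trick lets it cite the lemma directly at level $\delta$ and be done.
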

\begin{proof}
Observe that the induced symplectic isomorphism $K(\delta_\cM^{\star r}) \ra K(\cM^{\star r})$ is of product form, then use Lemma \ref{lem:productthetastructure}.
\end{proof}

We will now explain how to find the symplectic isomorphism $K(2 (\delta_\cM^{\star r})) \ra K((\cM^{\star r})^2)$ that turns~$\widetilde{\Theta}_{\cM^{\star r}}$ into a product theta structure. We will therefore work on $A^r$ instead.
 
Let $\Theta_\cL$ be as in Section \ref{subsec:isogthmapp} and let $\Theta_{\cL^{2}}$ be a symmetric theta structure on $(A, \cL^2)$ such that $(\Theta_\cL, \Theta_{\cL^2})$ is a compatible pair of symmetric theta structures for $(\cL, \cL^{2})$, as defined in \cite[p.317]{mumford:eq1}. Consider  the $r$-fold product symplectic basis $\{e_i^{'}, e_i^{''}\}_{i =1}^{gr}$ for the $2n$-torsion points of $(A^r, (\cL^2)^{\star r})$ determined by (the $r$-fold product of) the theta structure $\Theta_{\cL^2}$. Let $\{x_i', x_i''\}_{i=1}^{gr}$ be the $r$-fold product basis on $K((\cM^{2\beta})^{\star r})[2n]$ corresponding to the $r$-fold product theta structure $\Theta_{(\cM^{2\beta})^{\star r}}$. We know that $\widehat{f}^{\star r}(x_i') = e_i'$ (and same for $x_i''$). Let $y_i' = F(x_i')$ and $y_i'' = F(x_i'')$. The basis 
$\{y_i', y_i''\}_{i=1}^{gr}$ is not of product form, but is symplectic for the Weil pairing on $B^r[2n]$. Define $f_i' = (f^{\star r})^{-1}(y_i')$ and 
$f_i'' = (f^{\star r})^{-1}(y_i'')$ where we use the fact that $f^{\star r}|_{K((\cL^{2\beta})^{\star r})[2n]}$ is invertible. 
Note that $\{f_i', f_i''\}_{i=1}^{gr}$ is a non $r$-fold symplectic basis for 
$K((\cL^2)^{\star r})$. The following diagram might be helpful. Note that $\{e_i^{'}, e_i^{''}\}$ and $\{f_i', f_i''\}$ are bases of $A^r[2n]$, whereas $\{x_i', x_i''\}$ and $\{y_i', y_i''\}$ are bases of $B^r[2n]$.
$$\xymatrix{ 
\{e_i', e_i''\} \mbox{: $r$-fold, symplectic for } e_{(\cL^2)^{\star r}}\ar@{-->}[d]_{F\beta^{-1}}& \ar[l]_{\widehat{f}^{\star r}} \{x_i', x_i''\}\mbox{: $r$-fold, symplectic for }e_{(\cM^{2\beta})^{\star r}}\ar@{-->}[d]_{F}\\
\{f_i', f_i''\}\mbox{: non $r$-fold, symplectic for } e_{(\cL^{2\beta})^{\star r}}\ar[r]_{f^{\star r}}&\{y_i', y_i''\} \mbox{: non $r$-fold, symplectic for } e_{(\cM^2)^{\star r}}
}$$
A simple diagram chasing shows that indeed
$F\beta^{-1} (e_i') = f_i'$ 
and $F\beta^{-1} (e_i'') = f_i''$, so the non $r$-fold basis $\{f_i', f_i''\}$ of $A^r[2n]$ can be computed from the basis $\{e_i', e_i''\}$.

\begin{lem}\label{lem:switch}
Suppose that $\overline S_A \in \Sp(A^r[2n])$ is a symplectic automorphism (for the pairing $e_{(\cL^{2\beta})^{\star r}}$ restricted to $A^r[2n]$) such that the basis 
$\{\overline S_A(f_i'), \overline S_A(f_i'')\}_{i=1}^{gr}$ is an $r$-fold product basis. Then the basis $\{\overline S_B(y_i'), \overline{S}_B(y_i'')\}_{i=1}^{gr}$ is an $r$-fold product symplectic basis of $K((\cM^2)^{\star r})$ for $e_{(\cM^2)^{\star r}}$, where $\overline{S}_B = f^{\star r} \circ \overline{S}_A \circ (f^{\star r})^{-1} \in \Sp(B^r[2n])$. 
\end{lem}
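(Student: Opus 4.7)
The plan is to verify three things in turn: (i) that $\overline{S}_B$ is a well-defined automorphism of $B^r[2n] = K((\cM^2)^{\star r})$, (ii) that it is symplectic for the Weil pairing $e_{(\cM^2)^{\star r}}$, and (iii) that it carries $\{y_i', y_i''\}_{i=1}^{gr}$ to a basis of $r$-fold product form.

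First I would observe that $\ker(f^{\star r}) = G^r$ has order a power of $\ell$, which is coprime to $2n$; therefore $f^{\star r}$ restricts to a bijection $A^r[2n] \xrightarrow{\sim} B^r[2n]$, and so $(f^{\star r})^{-1}$ is an honest map on $B^r[2n]$. This settles (i) and makes $\overline{S}_B = f^{\star r} \circ \overline{S}_A \circ (f^{\star r})^{-1}$ a well-defined group automorphism of $B^r[2n]$.

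For (ii), I would invoke the standard compatibility of the Weil pairing with pullback of polarizations: since $f$ is $\beta$-cyclic we have $f^\ast \cM \sim \cL^\beta$, hence $(f^{\star r})^\ast (\cM^2)^{\star r} \sim (\cL^{2\beta})^{\star r}$ as algebraic equivalence classes. Because the Weil pairing depends only on the algebraic equivalence class of the polarization, one obtains for all $x,y \in A^r[2n]$ the identity
\[
e_{(\cM^2)^{\star r}}(f^{\star r}(x), f^{\star r}(y)) = e_{(\cL^{2\beta})^{\star r}}(x,y).
\]
Specializing to $x = (f^{\star r})^{-1}(u)$ and $y = (f^{\star r})^{-1}(v)$ for $u, v \in B^r[2n]$, and using the hypothesis that $\overline{S}_A$ is symplectic for $e_{(\cL^{2\beta})^{\star r}}$, then yields $e_{(\cM^2)^{\star r}}(\overline{S}_B(u), \overline{S}_B(v)) = e_{(\cM^2)^{\star r}}(u, v)$, as required.

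For (iii), the key observation is that $f^{\star r} = f \times \cdots \times f$ acts coordinate-wise, so any element of $A^r[2n]$ supported on a single factor is sent to an element of $B^r[2n]$ supported on the same factor. Consequently, if $\{\overline{S}_A(f_i'),\overline{S}_A(f_i'')\}_{i=1}^{gr}$ is of product form in $A^r[2n]$ (the hypothesis), its image $\{\overline{S}_B(y_i'),\overline{S}_B(y_i'')\}_{i=1}^{gr}$ under $f^{\star r}$ is automatically of product form in $B^r[2n]$, where I have used $y_i' = f^{\star r}(f_i')$ and the analogous relation for $y_i''$. That each factor restriction is still a symplectic basis of $B[2n]$ for $e_{\cM^2}$ follows from (ii) combined with the factorization of the product Weil pairing as the coordinate-wise product of the pairings $e_{\cM^2}$. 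The main obstacle is essentially bookkeeping: one has to keep track of which polarization each basis is symplectic for, and on which side of the isogeny (source $A^r$ or target $B^r$) the various objects live, but the commutative diagram preceding the lemma makes this straightforward.
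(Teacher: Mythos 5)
Your proof is correct and takes essentially the same route as the paper's: the crux in both cases is the identity $\overline S_B(y_i') = f^{\star r}(\overline S_A(f_i'))$ (coming directly from $\overline S_B = f^{\star r}\circ \overline S_A \circ (f^{\star r})^{-1}$ and $f_i' = (f^{\star r})^{-1}(y_i')$), together with the observation that the coordinate-wise map $f^{\star r}$ preserves $r$-fold product form. The only difference is that you spell out in parts (i) and (ii) the well-definedness of $(f^{\star r})^{-1}$ on $2n$-torsion and the compatibility $e_{(\cM^2)^{\star r}}(f^{\star r}\cdot, f^{\star r}\cdot) = e_{(\cL^{2\beta})^{\star r}}(\cdot,\cdot)$, both of which the paper treats as established by the discussion and diagram preceding the lemma and so relegates to a two-line proof.
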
 

\begin{proof}
Write 
$$
\overline S_B (y_i') = f^{\star r}(\overline S_A (f_i')) \qquad \text{and} \qquad  \overline S_B (y_i'') = f^{\star r}(\overline S_A (f_i'')). 
$$
Since $\{\overline S_A(f_i'), \overline S_A (f_i'')\}$ is an $r$-fold product basis, so is 
$\{\overline S_B(y_i'), \overline S_B(y_i'')\}$. 
\end{proof}

\noindent To compute such an $\overline{S}_A \in \Sp(A^r[2n])$ in practice, we do the following: 

\begin{itemize}
\item Let $M_{F\beta^{-1}} \in \GL_{2gr}(\Z / 2n \Z)$ be the matrix corresponding to the action of $F \beta^{-1}$ on $\{e_i', e_i''\}_{i=1}^{gr}$ (i.e. the one corresponding to the change of basis from $\{e_i', e_i''\}_{i=1}^{gr}$ to $\{f_i', f_i''\}_{i=1}^{gr}$). 

\item For each $N \in \GL_{2g}(\Z/2n\Z)$ let $\Delta(N)$ be the image of $N$ under the 
standard diagonal embedding $$\Delta \colon \GL_{2g}(\Z/2n\Z) \hookrightarrow \GL_{2gr}(\Z/2n\Z).$$
Then test whether $\overline{S}_A = \Delta(N) M_{F\beta^{-1}}^{-1} \in \Sp_{2gr}(\Z / 2n\Z)$ where the symplectic group $\Sp_{2gr}(\bZ/2n\bZ) \subset \GL_{2gr}(\Z / 2n\Z)$ is defined with respect to the pairing $\left( \begin{array}{cc} & I_{gr} \\ -I_{gr} &\end{array} \right)$ for the standard basis of $(\Z / 2n\Z)^{2gr}$. 
\end{itemize}
The complexity of the computation depends on  $g = \dim A$, $r$ and $n$ and has no dependency in $q = \# k$ and~$\ell$. In practice we will have $n=2$ and for most cases $r = 4$.

\subsection{Transforming theta coordinates under metaplectic automorphisms}\label{subsec:transform}
In this section we explain how to transform canonical theta coordinates under metaplectic automorphisms. 
We first treat the case where $(A, \cL)$ is a $g$-dimensional abelian variety over a field $k$ with a totally symmetric ample line bundle $\cL$ of type $\delta_A = (4, \dots, 4) \in \Z^g$, i.e. $\cL$ is algebraically equivalent to the 4th tensor power of a principal polarization. Let $\Theta_{\cL}$ and $\Theta_{\cL}'$ be two symmetric theta structures on $(A, \cL)$. 
We will explain how to compute the theta null point of $A$ with respect to $\Theta_{\cL}'$ from the theta null point of $A$ with respect to $\Theta_{\cL}$. More generally, for a geometric point $x \in A(\bar{k})$, we will 
explain how to express the canonical theta coordinates of $x$ with respect to $\Theta_{\cL}'$ out of the canonical theta coordinates of $x$ with respect to $\Theta_{\cL}$.  Then we will stick to the case $(A, \cL)$ where $\cL$ is totally symmetric of type $(2,\dots,2)$ and see how to relate theta coordinates on $A$ with respect to different symmetric theta structures, using the transformation law developed for $\cL^2$ (i.e. the one for totally symmetric line bundles of type $(4, \dots , 4)$).

\paragraph{Analytic transformation formula of Igusa for totally symmetric line bundles of type $(4,\dots,4)$.}
Here we restrict to the case $k = \C$. Let $(A = \C^g / \Lambda,\cL)$ be a complex abelian variety of dimension~$g$ with a totally symmetric line bundle $\cL$ on $A$ of type $\delta_A = (4,\dots,4)$ and let $\Theta_{\cL}$ and $\Theta_{\cL}'$ be two symmetric theta structures on $(A, \cL)$. The theta structures $\Theta_{\cL}$ and $\Theta_{\cL}'$ induce level $\delta_A$-structures on $(A, \cL)$ (see \cite[Ch.8.3]{birkenhake-lange}), which in return are induced by period matrices $\Omega \in \cH_g$ and $\Omega' \in \cH_g$ respectively. According to \cite[Prop.3.1.6]{Cosset} there exist bases $\left \{ \displaystyle\vartheta\begin{bmatrix}\mathbf{a} \\ \mathbf{b}\end{bmatrix} \right \}$ and $\left \{ \displaystyle\vartheta'\begin{bmatrix}\mathbf{a} \\ \mathbf{b}\end{bmatrix} \right \}$ of $\Gamma(A, \cL)$ determined by $\Omega$ and $\Omega'$ respectively, with indices $\mathbf{a}=(a_1,\ldots,a_g), \mathbf{b}=(b_1,\ldots,b_g) \in \frac{1}{2}\Z^g$ that run over all  representatives of elements of $\displaystyle \left ( \frac{1}{2}\bZ/\bZ \right )^g$. We want to see how these bases are related. Recall that $\Sp_{2g}(\Z)$ acts on $\C^g \times \cH_g$ via
\begin{equation} 
S \cdot (z, \Omega) = (S_{\Omega} \cdot z, S \cdot \Omega),
\end{equation}
where $S_{\Omega} \cdot z = \leftexp{t}{(C\Omega + D)}^{-1} z$ and $S \cdot \Omega=(A\Omega+B)(C\Omega+D)^{-1}$.
Igusa \cite[\S 5, Thm.2]{Igusa1972} proved that for any $(z,\Omega)\in \C^g\times \cH_g$ the following relation holds:

\begin{equation}
\displaystyle\theta\begin{bmatrix}\mathbf{a'} \\ \mathbf{b'}\end{bmatrix}(S_\Omega \cdot z,S \cdot \Omega)=\xi_{S}\cdot \xi_{z,S} \cdot \xi_{\mathbf a, \mathbf b}\cdot \sqrt{\det (C\Omega+D)} \cdot \theta\begin{bmatrix}\mathbf{a}\\\mathbf{b}\end{bmatrix}(z,\Omega), 
\end{equation}
where 
\begin{itemize}
\item $\xi_S$ is an $8$-th root of unity, 
\item $\xi_{z,S}=\exp \left (\pi i z^t(C\Omega+D)^{-1}Cz \right )$ and is $1$ for the case of $z=0$.
\item $\xi_{\mathbf{a}, \mathbf{b}}=\exp\left (-\pi i (\mathbf{a}^tAB^t \mathbf{a}+\mathbf{b}^tCD^t \mathbf{b})-2\pi i (A^t \mathbf{a}+C^t \mathbf{b}+\mathbf{e'})^t\mathbf{e''}-2 \pi i \mathbf{a}^tBC^t\mathbf{b}\right )$ 
\item  
$\begin{pmatrix}
\mathbf{a'}\\
\mathbf{b'}
\end{pmatrix}:=(S^t)^{-1}\cdot 
\begin{pmatrix}
\mathbf{a}-\mathbf{e'}\\
\mathbf{b}-\mathbf{e''}
\end{pmatrix}$, where $\displaystyle \mathbf{e}' = \frac{1}{2} \diag \left ( A^t C \right )$ and 
$\displaystyle \mathbf{e''} = \frac{1}{2} \left ( D^t B\right )$.   
\end{itemize}

Now consider the metaplectic automorphism $M:= \Theta_{\cL}^{-1} \circ \Theta_{\cL}' \in \Aut_{\C^\times}(\cH(\delta_A))$. Let $\overline S \in \Sp(K(\delta_A))$ be the induced symplectic automorphism. With respect to the standard symplectic basis of $K(\delta_A)$, the automorphism $\overline S$ can be expressed as a $2g$-by-$2g$ symplectic matrix with coefficients in $\Z / 4\Z$. Lift~$\overline{S}$ to a symplectic matrix $S = \left( \begin{array}{cc} A & B \\C & D \end{array}\right)\in \Sp_{2g}(\Z)$ in order that $\Omega' = S \cdot \Omega$. Then the canonical bases are related as

\begin{equation}\label{eq:isS}
\displaystyle\vartheta'\begin{bmatrix}\mathbf{a'} \\ \mathbf{b'}\end{bmatrix}(z)=\xi_{S}\cdot \xi_{z,S} \cdot \xi_{\mathbf a, \mathbf b}\cdot \sqrt{\det (C\Omega+D)} \cdot \vartheta\begin{bmatrix}\mathbf{a}\\\mathbf{b}\end{bmatrix}(z). 
\end{equation}

The important point is that the factor $\lambda=\xi_S \cdot \xi_{z,S} \cdot \sqrt{\det(C\Omega+D)}$ is independent of $\mathbf{a}$ and $\mathbf{b}$, so it gets absorbed when working with projective coordinates. 

\paragraph{Algebraic transformation formula for totally symmetric line bundles of type $(2,\dots,2)$.}
We now want to give a transformation law for canonical bases on $(A, \cL)$, where $\cL$ is totally symmetric of type $\delta_A = (2, \dots, 2)$. Suppose we have fixed two pairs of compatible symmetric theta structures $(\Theta_{\cL}, \Theta_{\cL^2})$ and $(\Theta_{\cL}', \Theta_{\cL^2}')$ for $(\cL, \cL^2)$ (as defined in \cite[p.317]{mumford:eq1}). Note that by \cite[\S2, Prop.7]{mumford:eq1} every symmetric theta structure $\Theta_{\cL}$ on $(A, \cL)$ can be extended to a compatible pair of symmetric theta structures $(\Theta_{\cL}, \Theta_{\cL^2})$ on $(\cL, \cL^2)$. 
Let $\overline{S} \in \Sp(K(\delta_A))$ and $\overline{S}_2 \in \Sp(K(2\delta_A))$ be the symplectic automorphisms induced by the metaplectic automorphisms $\Theta_{\cL}^{-1} \circ \Theta_{\cL}'$ and $\Theta_{\cL^2}^{-1} \circ \Theta_{\cL^2}'$ respectively. Then $\overline{S}_2$ restricts to $\overline{S}$ on $K(\delta_A)$. Denote by $S_2$ an arbitrary lift of $\overline{S}_2$ to $\Sp_{2g}(\Z)$.

Given the theta coordinates of a point $z$ with respect to the theta structure $\Theta_{\cL}$, we will explain how to compute the theta coordinates of the point $z'=S_{\Omega}\cdot z$  with respect to the theta structure~$\Theta'_{\cL}$. First note that the theta structures $\Theta_{\cL}$ and $\Theta_{\cL}'$ determine the squares of the theta coordinates for the compatible theta structures $\Theta_{\cL^2}$ and $\Theta_{\cL^2}'$ respectively. Hence, we write the analogue of \eqref{eq:isS} for the squares of the algebraic theta coordinates for $\Theta_{\cL^2}$ and $\Theta_{\cL^2}'$. 
There exists a constant $\lambda$ such that for every $\mathbf{a},\mathbf{b},\mathbf{a'},\mathbf{b'}\in Rpr\left(\frac{1}{2}\Z/\Z\right)^g$, with $\scalefont{0.85}{\begin{pmatrix}
\mathbf{a'}\\
\mathbf{b'}
\end{pmatrix}=(S_2^t)^{-1}\cdot 
\begin{pmatrix}
\mathbf{a}-\mathbf{e'}\\
\mathbf{b}-\mathbf{e''}
\end{pmatrix}}$, 
\begin{equation}\label{eq:isS_F}
\displaystyle\left(\theta^{\Theta_{\cL^2}'}_{\mathbf a', \mathbf b'}(z')\right)^2=\displaystyle \lambda\cdot \xi^2_{\mathbf a, \mathbf b}\cdot\left(\theta^{\Theta_{\cL^2}}_{\mathbf a, \mathbf b}(z)\right)^2.
\end{equation}

The theta coordinates of any point $z$ of $(A, \cL, \Theta_{\cL})$ and indexed by $\bi$ are transformed into squares of type $2\delta_A$ theta coordinates, denoted by $\theta^{\Theta_{\cL^2}}_{\mathbf{a},\mathbf{b}}(z)$ with indices 
$\displaystyle \mathbf{a}, \mathbf{b}\in Rpr\left (\frac{1}{2}\bZ/\bZ \right )^g$, via \cite[eq.(3.13)]{Cosset}:

\begin{equation}\label{eq:d_2To22}
\left (\theta^{\Theta_{\cL^2}}_{\mathbf{a}, \mathbf{b}}(z) \right )^2=\frac{1}{2^{g}}
\sum_{\mathbf{i}\in \left ( \frac{1}{2}\bZ/\bZ \right)^g}
\exp(4 \pi i\mathbf{a}^t\bi)\theta^{\Theta_{\cL}}_{\mathbf{b}+\bi}(z)\theta^{\Theta_{\cL}}_{\bi}(0). 
\end{equation}

Then apply \eqref{eq:isS_F} with $\scalefont{0.85}{\begin{pmatrix}
\mathbf{a'}\\
\mathbf{b'}
\end{pmatrix}=(S_2^t)^{-1}\cdot 
\begin{pmatrix}
\mathbf{a}-\mathbf{e'}\\
\mathbf{b}-\mathbf{e''}
\end{pmatrix}}$ for given indices $\mathbf{a},\mathbf{b}\in Rpr\left(\frac{1}{2}\Z/\Z\right)^g$.
Finally we use \cite[eq.(3.12)]{Cosset} to go back to theta coordinates for $\Theta_{\cL}'$ and deduce: 
\begin{equation}
\begin{array}{lll}
\displaystyle\theta^{\Theta_{\cL}'}_{\mathbf{b'}}(z') \theta^{\Theta_{\cL}'}_{\mathbf{0}}(0)& =&\displaystyle \sum_{\mathbf{a'} \in \left ( \frac{1}{2}\Z / \Z\right )^g} \left ( \theta_{\mathbf{a'}, \mathbf{b'}}^{\Theta_{\cL^2}'}(z') \right )^2\\
&=&\displaystyle\sum_{\mathbf{a}\in  \left ( \frac{1}{2}\Z / \Z\right )^g} \lambda\xi_{\mathbf{a},\mathbf{b}}^2\left(\theta_{\mathbf{a},\mathbf{b}}^{\Theta_{\cL^2}}(z)\right)^2\\
&=&\displaystyle\frac{\lambda}{2^{g}}\sum_{\mathbf{a} \in \left ( \frac{1}{2}\Z / \Z\right )^g}\xi^2_{\mathbf{a},\mathbf{b}}
\sum_{\mathbf{i}\in \left ( \frac{1}{2}\bZ/\bZ \right)^g}
\exp(4\pi i\mathbf{a}^t\mathbf{i})\theta^{\Theta_{\cL}}_{\mathbf{b}+\mathbf{i}}(z)\theta^{\Theta_{\cL}}_{\mathbf{i}}(0). 
\end{array}\end{equation}

\noindent We summarise the above in the following proposition.

\begin{prop}\label{prop:transform}
Let $(A, \cL)$ be a polarized abelian variety of dimension $g$ with a totally symmetric line bundle $\cL$ of type $(2, \dots , 2)$. Let $(\Theta_{\cL}, \Theta_{\cL^2})$ and $(\Theta_{\cL}', \Theta_{\cL^2}')$ be two pairs of compatible symmetric theta structures for $(\cL, \cL^2)$. Let $\overline{S} \in \Sp(\Z / 4\Z)$ be the symplectic automorphism induced by $\Theta_{\cL^2}^{-1} \circ \Theta_{\cL^2}'$ and let 
$S = \left( \begin{array}{cc} A & B \\C & D \end{array}\right)\in \Sp_{2g}(\Z)$ be an arbitrary lift of $\overline{S}$. Let $\mathbf{e}' = \frac{1}{2} \diag ( A^t C)$ and 
$ \mathbf{e''} = \frac{1}{2} \diag( D^t B)$. Then there exists a constant $\lambda$ such that 
 \begin{equation}\label{eq:d_Stransf}
\begin{array}{lll}
\displaystyle\theta^{\Theta_{\cL}'}_{\mathbf{b'}}(z') \theta^{\Theta_{\cL}'}_{\mathbf{0}}(0)& =&\displaystyle\frac{\lambda}{2^{g}}\sum_{\mathbf{a} \in \left ( \frac{1}{2}\Z / \Z\right )^g}\xi^2_{\mathbf{a},\mathbf{b}}
\sum_{\mathbf{i}\in \left ( \frac{1}{2}\bZ/\bZ \right)^g}
\exp(4 \pi i\mathbf{a}^t\mathbf{i})\theta^{\Theta_{\cL}}_{\mathbf{b}+\mathbf{i}}(z)\theta^{\Theta_{\cL}}_{\mathbf{i}}(0),
\end{array}\end{equation}
\end{prop}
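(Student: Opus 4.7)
The plan is to reduce the statement to the analytic transformation formula of Igusa for the doubled line bundle $\cL_X^2$ (of type $(4,\dots,4)$), namely \eqref{eq:isS_F}, and then transport the resulting identity back to $\cL_X$ by means of the duplication and dedoubling identities recalled in the preceding discussion.

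Starting from the left-hand side, I would first apply the duplication relation \cite[eq.(3.12)]{Cosset} to write $\theta^{\Theta_{\cL_X}'}_{\mathbf{b}'}(z')\,\theta^{\Theta_{\cL_X}'}_{\mathbf{0}}(0)$ as a sum over $\mathbf{a}' \in (\tfrac{1}{2}\Z/\Z)^g$ of squared $(4,\dots,4)$-coordinates $(\theta^{\Theta_{\cL_X^2}'}_{\mathbf{a}',\mathbf{b}'}(z'))^2$. To each such term I would apply the Igusa relation \eqref{eq:isS_F}, which rewrites it as $\lambda\, \xi_{\mathbf{a},\mathbf{b}}^2\, (\theta^{\Theta_{\cL_X^2}}_{\mathbf{a},\mathbf{b}}(z))^2$, where $(\mathbf{a},\mathbf{b})$ is the affine image of $(\mathbf{a}',\mathbf{b}')$ under $S^t$ shifted by $(\mathbf{e}',\mathbf{e}'')$, and $\lambda$ is a global constant independent of $(\mathbf{a},\mathbf{b})$. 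As $\mathbf{a}'$ ranges over $(\tfrac{1}{2}\Z/\Z)^g$ with $\mathbf{b}'$ fixed, the map $(S^t)^{-1}$ induces a bijection that reindexes the sum as a sum over $\mathbf{a} \in (\tfrac{1}{2}\Z/\Z)^g$. Finally I would substitute the dedoubling identity \cite[eq.(3.13)]{Cosset} to expand each squared $(4,\dots,4)$-coordinate as $\tfrac{1}{2^g}\sum_{\mathbf{i}}\exp(4\pi i\,\mathbf{a}^t\mathbf{i})\,\theta^{\Theta_{\cL_X}}_{\mathbf{b}+\mathbf{i}}(z)\,\theta^{\Theta_{\cL_X}}_{\mathbf{i}}(0)$. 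Interchanging summations then yields the formula \eqref{eq:d_Stransf}.

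The main obstacle is the bookkeeping of roots of unity and the independence of auxiliary choices. Igusa's formula carries an $8$th-root ambiguity $\xi_S$, but after squaring this becomes a $4$th-root, compatible with the $\xi_{\mathbf{a},\mathbf{b}}^2$ factor that survives (the common piece $\xi_S^2\,\xi_{z,S}^2\,\det(C\Omega+D)$ being absorbed into $\lambda$). One must also verify insensitivity to the choice of lift $S \in \Sp_{2g}(\Z)$ of $\overline{S}_2$: any two such lifts differ by a symplectic matrix congruent to the identity modulo $4$, whose action on the squared $(4,\dots,4)$-coordinates changes each term by a single scalar that may again be absorbed into $\lambda$. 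Although the derivation is carried out over $\C$, both sides of \eqref{eq:d_Stransf} are polynomial in the theta coordinates with coefficients depending only on $S$, so the identity extends to arbitrary fields by a standard specialization argument, analogous to the one used in the proof of Proposition \ref{prop:ppav}.
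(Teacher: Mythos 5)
Your proposal follows exactly the same route as the paper: duplication via \cite[eq.(3.12)]{Cosset} to pass to squared level-$(4,\dots,4)$ coordinates, Igusa's transformation law \eqref{eq:isS_F}, and dedoubling via \cite[eq.(3.13)]{Cosset} to return to level $(2,\dots,2)$, with summations interchanged at the end. The additional remarks on absorbing $\xi_S$ into $\lambda$, independence of the lift $S$, and specialization from $\C$ to general fields are sensible glosses but do not change the argument, which the paper simply records in the paragraph immediately preceding the proposition.
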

\noindent where $\mathbf{a},\mathbf{b},\mathbf{a'},\mathbf{b'}\in Rpr\left(\frac{1}{2}\Z/\Z\right)^g$ satisfy $\scalefont{0.85}{\begin{pmatrix}
\mathbf{a'}\\
\mathbf{b'}
\end{pmatrix}=(S^t)^{-1}\cdot 
\begin{pmatrix}
\mathbf{a}-\mathbf{e'}\\
\mathbf{b}-
\mathbf{e''}
\end{pmatrix}}$.

\subsection{Computing the theta null point of $(B, \cM, \Theta_\cM)$}\label{subsec:unfolding} 
Let $(A, \cL)$ be a polarized abelian $g$-fold with $\cL$ totally symmetric of type $\delta = (2,\dots,2)$. Suppose we have fixed a compatible pair of symmetric theta structures $(\Theta_\cL, \Theta_{\cL^2})$ for $(\cL, \cL^2)$. Let $\overline{S}_A \in \Sp(A^r[4])$ be computed as in Section \ref{par:unfolding}, where the symplectic pairing on $A^r[4]$ is the restriction of $e_{(\cL^{2\beta})^{\star r}}$ to $A^r[4]$. Via $\overline{\Theta}_{(\cL^{2\beta})^{\star r}}$ we might see $\overline{S}_A$ as a matrix $\overline{S} \in \Sp_{2gr}(\Z / 4\Z)$.

On the other hand, $(\Theta_\cL, \Theta_{\cL^2})$ induces the compatible pair of symmetric theta structures $(\Theta_\cM, \Theta_{\cM^2})$ for the totally symmetric line bundles $(\cM, \cM^2)$ on $B$. According to \cite[Rem.2, p.318]{mumford:eq1} there exists a symmetric theta structure $\widetilde{\Theta}_{(\cM^2)^{\star r}}$ above $\widetilde{\Theta}_{\cM^{\star r}}$ such that the symplectic automorphism induced by $\widetilde{\Theta}_{(\cM^2)^{\star r}}^{-1} \circ \Theta_{(\cM^2)^{\star r}}$ (when expressed in the canonical symplectic basis of $K(2\delta^{\star r})$) equals $\overline{S}$.

Now suppose we are given the theta null point $(\theta^{\widetilde{\Theta}_{\cM^{\star r}}}_{\mathbf{b}} (0_{B^r}))_{\mathbf{b}\in K_1(\cM^{\star r})}$ of $(B^r, \cM^{\star r}, \widetilde{\Theta}_{\cM^{\star r}})$. If we apply Proposition \ref{prop:transform} to $(B^r, \cM^{\star r})$ and the two pairs of compatible symmetric theta structures $(\widetilde{\Theta}_{\cM^{\star r}}, \widetilde{\Theta}_{(\cM^2)^{\star r}})$ and $(\Theta_{\cM^{\star r}}, \Theta_{(\cM^2)^{\star r}})$, then we can compute the theta null point for the product theta structure $\Theta_{\cM^{\star r}}$ by \eqref{eq:d_Stransf}. Let $S = \left( \begin{array}{cc} A & B \\C & D \end{array}\right)\in \Sp_{2gr}(\Z)$ be an arbitrary lift of~$\overline{S}$. Let $\mathbf{e}' = \frac{1}{2} \diag ( A^t C)$ and 
$ \mathbf{e''} = \frac{1}{2} \diag( D^t B)$. Then there exists a constant $\lambda$ (containing the factor~$\theta^{\Theta_{\cM^{\star r}}}_{\mathbf{0}}(0_{B^r})$) for which the new theta coordinates are
 \begin{equation}\label{eq:isS_gen}
\theta^{\Theta_{\cM^{\star r}}}_{\mathbf{b'}}(0_{B^r}) =\frac{\lambda}{2^{2r}}\sum_{\mathbf{a} \in \left ( \frac{1}{2}\Z^g / \Z^g \right )^r} \xi^2_{\mathbf{a},\mathbf{b}}\sum_{\mathbf{i}\in \left ( \frac{1}{2}\Z^g/\Z^g \right)^r}
\exp(4 \pi i\mathbf{a}^t\mathbf{i})\theta^{\widetilde \Theta_{\cM^{\star r}}}_{\mathbf{b}+\mathbf{i}}(0_{B^r})\theta^{\widetilde \Theta_{\cM^{\star r}}}_{\mathbf{i}}(0_{B^r}), 
\end{equation}
where $\mathbf{a},\mathbf{b},\mathbf{a'},\mathbf{b'}\in Rpr  \left ( \frac{1}{2}\Z^g/\Z^g \right )^r$ are related by $\scalefont{0.85}{\begin{pmatrix}
\mathbf{a'}\\
\mathbf{b'}
\end{pmatrix}=\begin{pmatrix} D & -C\\ -B & A \end{pmatrix}\cdot 
\begin{pmatrix}
\mathbf{a}-\mathbf{e'}\\
\mathbf{b}-
\mathbf{e''}
\end{pmatrix}}$, and 
$$
\xi_{\mathbf{a}, \mathbf{b}}=\exp \left(-\pi i (\mathbf{a}^t A B^t \mathbf{a}+\mathbf{b}^t C D^t \mathbf{b})- 2\pi i(A^t \mathbf{a}+C^t \mathbf{b}+\mathbf{e'})^t\mathbf{e''}-2\pi i \mathbf{a}^t B C^t\mathbf{b}\right).
$$ 

Finally, we have that for any $\mathbf{k} = (k_1, \dots , k_r) \in Rpr  \left ( \frac{1}{2}\Z^g/\Z^g \right )^r$
$$\theta^{\Theta_{\cM^{\star r}}}_{\mathbf{k}}(0_{B^r}) = \theta^{\Theta_{\cM}}_{k_1}(0_B) \cdot \theta^{\Theta_{\cM^{\star (r-1)}}}_{(k_2, \dots, k_r)}(0_{B^{r-1}}).$$
Fixing $(k_2, \dots, k_r) \in Rpr  \left ( \frac{1}{2}\Z^g/\Z^g \right )^{r-1}$ such that $\theta^{\Theta_{\cM^{\star (r-1)}}}_{(k_2, \dots, k_r)}(0_{B^{r-1}}) \ne 0$, we obtain the projective theta null point $(\theta^{\Theta_\cM}_k (0_B))_{k \in K_1(\cM)}$ for $(B, \cM, \Theta_\cM)$.

\begin{rem}
In order to apply Lemma \ref{lem:switch} and Proposition \ref{prop:transform}, and hence to compute the theta null point of $(B, \cM, \Theta_\cM)$, it is important to note that we need not only a symmetric theta structure $\Theta_\cL$ for $(A, \cL)$, where $\cL$ is totally symmetric and of type $(2, \dots, 2)$, but also a compatible symmetric theta structure $\Theta_{\cL^2}$ for $(A,\cL^2)$. In the case where $A$ is the Jacobian variety of a hyperelliptic curve~$H$ of genus $g$, Thomae's formula \cite[Theorem 8.1.]{mumford1984} gives a way to fix such a pair of compatible symmetric theta structures $(\Theta_\cL, \Theta_{\cL^2})$, and hence to apply the above algorithm.
\end{rem}

%
%
\section{Evaluating the Isogeny on Points}\label{sec:isogpts}

Let $x\in A(k)$ be a point of order $N$ coprime to $\ell$. We want to express the theta coordinates of~$y = f(x) \in B(k)$ with respect to the theta structure 
$\Theta_{\cM}$ in terms of the theta coordinates of $x$ with respect to the theta structure $\Theta_{\cL}$.

\subsection{The preimage of $(y, 0, \dots, 0)$ under $F$}\label{subsec:preim}
Consider the following subgroup of $A^r(k)$:  
$$
X = \{ (a_1 x, \dots, a_r x) \colon (a_1, \dots, a_r) \in \Z^r\}.  
$$ 
As $\beta=F\circ F^t$ is an automorphism of $X$, then both $F$ and $F^t$ are automorphisms of $X$. 

\begin{lem}\label{lem:preim}
Let $(x_1,\ldots,x_r)=F^t(x,0\ldots,0)\in X$ and let $(x_1',\ldots,x'_r)=F^{-1}(x,0,\ldots,0)\in X$. 
If~$y_i=f(x'_i)\in B$
then, $$(f(x),0,\ldots,0)=F(y_1,\ldots,y_r) \text{ and } \widehat f^{\star r}(y_1,\ldots,y_r)=(x_1,\ldots,x_r).$$
\end{lem}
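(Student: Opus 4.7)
The plan is to derive both identities formally from three ingredients: (i) the matrix $F \in M_r(K_0)$ induces well-defined endomorphisms of both $A^r$ and $B^r$ that commute with $f^{\star r}$ and $\widehat{f}^{\star r}$; (ii) the defining relation $\widehat{f} \circ f = \beta \cdot \mathrm{id}_A$ of the $\beta$-contragredient isogeny; and (iii) the quaternionic matrix identity $F F^t = F^t F = \beta\, I_r$, which is the incarnation of $q\bar{q} = \bar{q}q = N(q)$ in the Hamilton quaternions over $K_0$ embedded in $M_r(K_0)$.

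Granted these, both equalities are short computations. For the first, using (i),
\[
F(y_1, \ldots, y_r) = F \circ f^{\star r}(x'_1, \ldots, x'_r) = f^{\star r} \circ F(x'_1, \ldots, x'_r) = f^{\star r}(x, 0, \ldots, 0) = (f(x), 0, \ldots, 0).
\]
For the second, I combine $\widehat{f} \circ f = \beta$ applied component-wise with the identity $\beta F^{-1} = (F F^t) F^{-1} = F^t$ (valid on $X$ since $\beta$ is invertible there):
\[
\widehat{f}^{\star r}(y_1, \ldots, y_r) = \widehat{f}^{\star r} \circ f^{\star r}(x'_1, \ldots, x'_r) = \beta(x'_1, \ldots, x'_r) = \beta F^{-1}(x, 0, \ldots, 0) = F^t(x, 0, \ldots, 0).
\]

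The main bookkeeping concern — rather than a genuine mathematical obstacle — is justifying (i). Each $\alpha_i \in K_0$ defining $F$ preserves the kernel $G \subset A[\beta]$ because the $\cO_{K_0}$-action on $A[\beta]$ factors through $\cO_{K_0}/(\beta) \cong \Fl$ and hence acts by scalars; so each $\alpha_i$ descends to an endomorphism of $B$ compatible with $f$, and the commutation $F \circ f^{\star r} = f^{\star r} \circ F$ (and its analogue for $\widehat{f}^{\star r}$) follows entrywise. Since the potential denominators of the $\alpha_i$ are coprime to $n\ell$, hence to the order $N$ of $x$, all the manipulations above are genuine operations on the finite group $X$.
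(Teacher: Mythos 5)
Your proof is correct and takes the same route as the paper: both verify the identities by chasing $x'_s$ through $f^{\star r}$ and $\widehat f^{\star r}$ using the commutation of $F$ with the product isogenies together with $\widehat f \circ f = \beta$ and $F^t F = F F^t = \beta I_r$ (the paper writes the second chain as $(x_1,\ldots,x_r) = F^t F(x'_1,\ldots,x'_r) = \beta(x'_1,\ldots,x'_r) = \widehat f^{\star r}\circ f^{\star r}(x'_1,\ldots,x'_r)$, which is your computation read in the opposite order). Your explicit justification that the $\alpha_i$ act compatibly through $f$ on the finite group $X$ is a welcome addition; the paper leaves that commutativity implicit in its diagram, having already set up in the preamble to Section~5.1 that $F$ and $F^t$ restrict to automorphisms of~$X$.
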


\begin{proof}
Let $(y_1',\ldots,y_r')=F^t(y,0,\ldots,0)$.
Then it follows:
\[
\xymatrix{
(x_1', \dots, x'_r) & \in & A^r(k)  \ar[d]^{F}\ar[r]^{f^{\star r}}& B^r(k) \ar[d]^{F}   & \ni & (y_1, \dots, y_r)  \\ 
(x, 0,\dots, 0) & \in & A^r(k) \ar[d]^{F^t}\ar[r]^{f^{\star r}}& B^r(k) \ar[d]^{F^t} & \ni & (y, 0, \dots, 0)\\
(x_1,\ldots,x_r)& \in & A^r(k) \ar[r]^{f^{\star r}}& B^r(k) & \ni &(y_1',\ldots,y_r').
}
\]
and consequently 
$$
F(y_1,\ldots,y_r)=(y,0\ldots,0)
$$ 
and 
$$
(x_1,\ldots,x_r)=\beta(x_1',\ldots,x_r')=\widehat f^{\star r}(f^{\star r}(x'_1,\ldots,x'_r))=\widehat f^{\star r}(y_1,\ldots,y_r).
$$ 
\end{proof}

\subsection{Computing the point $\left(\theta^{\widetilde{\Theta}_{\cM^{\star r}}}_{\mathbf{k}}(y,0,\dots,0)\right)_{\mathbf{k} \in K_1(\cM^{\star r})}$}\label{sebseq:thetacoord_B^r}

\paragraph{The isogeny theorem for $F$.} Similarly to the computation of the theta null point of $B^r$ with respect to $\widetilde{\Theta}_{\cM^{\star r}}$, we use the isogeny of polarized abelian varieties
$$F \colon (B^r, (\cM^\beta)^{\star r}, \Theta_{(\cM^\beta)^{\star r}}) \ra (B^r, \cM^{\star r}, \widetilde{\Theta}_{\cM^{\star r}})$$
to compute the coordinates of $(y,0,\dots,0)$ with respect to $\widetilde{\Theta}_{\cM^{\star r}}$.
Equation~\eqref{eq:isogF} implies that up to a projective factor $\lambda \in \bar{k}^\times$, one has that for every $\mathbf{k} \in K_1(\cM^{\star r})$,

\begin{equation}\label{eq:isogPts}
\theta^{\widetilde{\Theta}_{\cM^{\star r}}}_{\mathbf{k}}( y, 0, \dots, 0)
= \lambda \cdot \displaystyle\sum_{\substack{\mathbf{t}'' \in K_1((\cM^\beta)^{\star r})[\beta] \\ F(\mathbf{t}'') = 0}}
\prod_{s=1}^r \theta^{\Theta_{\cM^\beta}}_{j_s + t_s''} (y_s),
\end {equation}
where $\mathbf{j} \in K_1((\cM^\beta)^{\star r})[n]$ is the unique index for which $F(\mathbf{j}) = \mathbf{k}$. 

We encounter the same difficulty as in Section~\ref{subsec:complifts}. That is, the input of the algorithm only provides us with the theta coordinates $\left( \theta_i ^{\Theta_\cL} (0_A) \right)_{i \in K_1(\cL)}$ of $0_A$, the theta coordinates $\left( \theta_i ^{\Theta_\cL} (t) \right)_{i \in K_1(\cL)}$ of~$t$ (a generator of the kernel of $f$) and the theta coordinates $\left( \theta_i ^{\Theta_\cL} (x) \right)_{i \in K_1(\cL)}$ of $x$. As in the computation of the theta null point of $B^r$ with respect to $\widetilde{\Theta}_{\cM^{\star r}}$, we would like to use a combination of the isogeny theorem for $F$ and of the isogeny theorem for $\widehat{f}$. Following Lemma \ref{lem:preim}, we consider $x_1 = \alpha_1 x, \dots , x_r = \alpha_r x$ which satisfy $x_1 = \widehat{f}(y_1), \dots , x_r = \widehat{f}(y_r)$, where $(y_1 , \dots , y_r)\in B^r(k)$ is the unique point such that $(y,0, \dots , 0) = F(y_1, \dots , y_r)$. 
As before, we would like to compute \eqref{eq:isogPts} by making substitutions
$$
\theta_{j_s + u_st'}^{\Theta_{\cM^\beta}}(y_s) = \theta_{i_s}^{\Theta_\cL}(x_s + u_st),
$$
where $i_s = \widehat{f}(j_s)$ and where we write $u_st'$ for $t_s''$, but we have to carefully work this out.
One of the major difference in computing 
$\displaystyle \prod_{s=1}^r \theta^{\Theta_{\cM^\beta}}_{j_s + t_s''} (y_s)$
compared to computing 
$\displaystyle \prod_{s=1}^r \theta^{\Theta_{\cM^\beta}}_{j_s+t''_s} (0_{B})$
is that we have to substitute affine coordinates not only of one point $\widetilde{0}_B$ but of $r$ points $\widetilde{y_1}, \dots , \widetilde{y_r}$. Now this might look like a restriction, but it turns out to actually simplify things, since we are free to choose affine lifts of $y_1, \dots , y_r$, a priori without any relation amongst them.

\paragraph{Right lifts and suitable lifts.}\label{par:suitable}
Let $\widetilde{\widehat{f}}$ be the affine lift of $\widehat{f}$ of Section \ref{par:affine_isogthm}. Fixing lifts $\widetilde{x_1}, \dots , \widetilde{x_r}$ of $x_1, \dots, x_r$ determines lifts $\widetilde{y_1} , \dots , \widetilde{y_r}$ of $y_1, \dots , y_r$ by the relation
$$\widetilde{x_1} = \widetilde{\widehat{f}}(\widetilde{y_1}), \dots , \widetilde{x_r} = \widetilde{\widehat{f}}(\widetilde{y_r}).$$
If we were able to compute the lifts $\widetilde{y_1} , \dots , \widetilde{y_r}$, then we could substitute their coordinates in \eqref{eq:isogPts} and compute an affine lift of $(y, 0, \dots , 0)$ for $\widetilde{\Theta}_{\cM^{\star r}}$. Unfortunately we cannot directly compute those from the input.

We make the following crucial observation. The lifts $\widetilde{y_1}, \dots , \widetilde{y_r}$ determine lifts of $x_1 + u_1t, \dots , x_r + u_rt$, where $1 \le u_1, \dots , u_r \le \ell-1$, as
$$\widetilde{x_s + u_st} = \widetilde{\widehat{f}}((1, u_si_{t'}, 0) \cdot \widetilde{y}_s).$$
These lifts will play an important role in the sequel.

\begin{defn} 
Let $\widetilde{0}_A$ and $\widetilde{x_1}, \dots , \widetilde{x_r}$ be fixed affine lifts of $0_A$ and $x_1, \dots , x_r$ respectively. Let $\widetilde{0}_B$ and $\widetilde{y_1}, \dots , \widetilde{y_r}$ be the lifts induced by $\widetilde{\widehat{f}}$ and $\widetilde{0}_A, \widetilde{x_1}, \dots, \widetilde{x_r}$. For $s=1, \dots , r$ and $1 \le u, u_s \le \ell-1$, the induced lifts 
$$\widetilde{\widehat{f}}((1, u_si_{t'}, 0) \cdot \widetilde{y_s}) \text{ and } \widetilde{\widehat{f}}((1, ui_{t'}, 0) \cdot \widetilde{0}_B)$$
are called \textbf{right} lifts of $x_s + u_st$ and $ut$ respectively, and are denoted by
$$\widetilde{x_s + u_st}_{\text{right}} \text{ and } \widetilde{ut}_{\text{right}}.$$
\end{defn}

The terminology is motivated by the following: fixing lifts $\widetilde{0}_A$ and $\widetilde{x_1}, \dots , \widetilde{x_r}$, if we knew the \textbf{right} lifts of all the points $x_s + u_st$ (with respect to $\widetilde{x_1}, \dots , \widetilde{x_r}$), we could patch their affine coordinates together and recover affine lifts of $\widetilde{y_1}, \dots , \widetilde{y_r}$. Surely we do not know the \textbf{right} lift, since when we convert the point $x_s + u_st$ from Mumford to theta coordinates, we do an arbitrary choice of affine lift. Hence, the lift differs from the \textbf{right} lift by a scalar $\lambda_{x_s + u_st} \in \bar{k}^\times$.

But we can show, similar to the notion of excellent lifts from Section \ref{par:excellent}, that the \textbf{right} lifts of $\{x_s + u_st : s = 1, \dots, r \text{ and } u_s = 1, \dots, \ell-1\}$ satisfy some compatibility conditions, reducing the ambiguity when fixing a lift to a choice of an $\ell$-th root of unity.

\begin{defn}[(suitable lifts)]
Let $x \in A(k)$ (not necessarily the input of the algorithm) and let $\widetilde x$ be a fixed affine lift of $x$. Let $\widetilde{t} $ be an affine lift of 
$t\in G$ (we can assume that $\widetilde{t}$ is excellent). We call an affine lift $\widetilde{x + t}$ of $x+t$ \emph{suitable} for $\widetilde{t}$ and $\widetilde{x}$ if
$$
\chainmultadd(\ell, \widetilde{x+t}, \widetilde{t}, \widetilde{x}) = \widetilde{x}. 
$$

\end{defn}
For the algorithms $\chainmult$ and $\chainmultadd$, see Section \ref{par:excellent}. The computation of a suitable lift of $x+t$ is similar to the computation of excellent lifts in the previous section: we take any lift $\widetilde{x+t}$ and search for a scalar $\lambda_{x+t} \in \bar{k}^\times$ such that $\lambda_{x+t} \cdot \widetilde{x+t}$ is suitable. Using \cite[Lem.4.8]{lubicz-robert:isogenies}, we obtain that in order for 
$\lambda_{x+t} \cdot \widetilde{x+t}$ to be suitable, we need 
$$
\widetilde{x} = \lambda_{x+t}^\ell \cdot \chainmultadd(\ell, \widetilde{x+t}, \widetilde{t}, \widetilde{x}). 
$$
The latter determines $\lambda_{x+t}^\ell$ uniquely (since $\chainmultadd(\ell, \widetilde{x+t}, \widetilde{t}, \widetilde{x})$ can be computed using \cite[Alg.4.6]{lubicz-robert:isogenies}). This determines $\lambda_{x+t}$ up to an $\ell$-th root of unity.

We will now show that the notion of suitable lift is the correct notion.

\begin{prop}
Let $\widetilde{0}_A$ and $\widetilde{x_1}, \dots , \widetilde{x_r}$ be fixed affine lifts of $0_A$ and $x_1, \dots , x_r$ respectively. Then, the \textbf{right} lift $\widetilde{t}_{\text{right}}$ is an excellent lift of $t$ and for $2 \le u \le \ell-1$,
$$\widetilde{ut}_{\text{right}} = \chainmult(u, \widetilde{t}_{\text{right}}).$$
Moreover, for $s=1, \dots , r$, the \textbf{right} lift $\widetilde{x_s + t}_{\text{right}}$  of $x_s + t$ is suitable for $\widetilde{t}_{\text{right}}$ and $\widetilde{x_s}$ and we have
$$\widetilde{x_s + u_st}_{\text{right}} = \chainmultadd(u_s, \widetilde{x_s + t}_{\text{right}}, \widetilde{t}_{\text{right}}, \widetilde{x_s}), \text{ for } u_s = 2, \dots , \ell-1.$$
\end{prop}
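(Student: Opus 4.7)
The first two assertions about $\widetilde{t}_{\text{right}}$ and $\widetilde{ut}_{\text{right}}$ are exactly the content of Proposition~\ref{prop:excellentlift}, so the only new statements are those concerning the lifts $\widetilde{x_s+u_st}_{\text{right}}$. The plan is to imitate the proof of Proposition~\ref{prop:excellentlift}: translate everything to the $B$-side via the fixed affine isogeny $\widetilde{\widehat{f}}$, use the two commutation principles that were crucial there, namely the compatibility of $\chainmult$ and $\chainmultadd$ with the Heisenberg action (Lemma~\ref{lem:compatibility}) and with affine isogenies (\cite[Prop.~3.17]{lubicz-robert:isogenies}), and finally exploit $\ell\cdot i_{t'}=0$ in $\Z(\delta_{\cM^\beta})$.

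More concretely, by construction $\widetilde{x_s+t}_{\text{right}} = \widetilde{\widehat{f}}((1,i_{t'},0)\cdot \widetilde{y}_s)$, $\widetilde{t}_{\text{right}} = \widetilde{\widehat{f}}((1,i_{t'},0)\cdot \widetilde{0}_B)$ and $\widetilde{x}_s = \widetilde{\widehat{f}}(\widetilde{y}_s)$. For the formula $\widetilde{x_s+u_st}_{\text{right}} = \chainmultadd(u_s, \widetilde{x_s+t}_{\text{right}}, \widetilde{t}_{\text{right}}, \widetilde{x}_s)$ I would pull $\widetilde{\widehat{f}}$ out through \cite[Prop.~3.17]{lubicz-robert:isogenies} and then, inside $\widetilde{\widehat{f}}(\,\cdot\,)$, apply Lemma~\ref{lem:compatibility} with $m=u_s$, $(i,k)=(i_{t'},0)$, $\widetilde{y}=\widetilde{y}_s$, $\widetilde{y}'=\widetilde{0}_B$, $\widetilde{y+y'}=\widetilde{y}_s$ to extract the Heisenberg factor; this rewrites the right-hand side as $\widetilde{\widehat{f}}\bigl((1,u_si_{t'},0)\cdot \chainmultadd(u_s,\widetilde{y}_s,\widetilde{0}_B,\widetilde{y}_s)\bigr)$. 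The inner term is the point of friction that I treat separately below; granting that it equals $\widetilde{y}_s$, the whole expression collapses to $\widetilde{\widehat{f}}((1,u_si_{t'},0)\cdot \widetilde{y}_s)$, which is the definition of $\widetilde{x_s+u_st}_{\text{right}}$.

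Suitability is the special case $u_s=\ell$: the same chain of equalities yields $\chainmultadd(\ell,\widetilde{x_s+t}_{\text{right}},\widetilde{t}_{\text{right}},\widetilde{x}_s) = \widetilde{\widehat{f}}\bigl((1,\ell i_{t'},0)\cdot \widetilde{y}_s\bigr)$, and since $\ell i_{t'}=0$ the Heisenberg factor acts trivially, so the expression reduces to $\widetilde{\widehat{f}}(\widetilde{y}_s)=\widetilde{x}_s$, which is exactly the defining equation of a suitable lift.

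The only non-formal step is the identity $\chainmultadd(m,\widetilde{y}_s,\widetilde{0}_B,\widetilde{y}_s) = \widetilde{y}_s$ for all $m\ge 0$, which I view as the main obstacle: it must be extracted directly from the recursive definition of $\chainmultadd$ in \cite[\S 4.4]{drobert:thesis}. By induction on $m$, the base cases $m=0,1$ are immediate from the definition, and the inductive step reduces via $\chainmultadd(m,\widetilde{y}_s,\widetilde{0}_B,\widetilde{y}_s) = \chainadd\bigl(\chainmultadd(m{-}1,\widetilde{y}_s,\widetilde{0}_B,\widetilde{y}_s),\widetilde{0}_B,\chainmultadd(m{-}2,\widetilde{y}_s,\widetilde{0}_B,\widetilde{y}_s)\bigr)$ to the fact that $\chainadd(\widetilde{y}_s,\widetilde{0}_B,\widetilde{y}_s)=\widetilde{y}_s$, a standard Riemann-relations identity. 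Once this lemma is in place, everything else is the same bookkeeping carried out in the proof of Proposition~\ref{prop:excellentlift}.
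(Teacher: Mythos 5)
Your proposal is correct and follows essentially the same route as the paper's proof: translate to $B$ via $\widetilde{\widehat{f}}$, pull the affine isogeny out through the compatibility with $\chainmultadd$, extract the Heisenberg factor via Lemma~\ref{lem:compatibility}, and use $\ell i_{t'}=0$. You are in fact slightly more careful than the paper in two places, making explicit the inner identity $\chainmultadd(m,\widetilde{y}_s,\widetilde{0}_B,\widetilde{y}_s)=\widetilde{y}_s$ (which the paper uses without comment in the fourth line of its chain of equalities) and spelling out the $u_s=\ell$ specialisation that gives the suitability claim.
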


\begin{proof}
We proved in Proposition \ref{prop:excellentlift} that $\widetilde{t}_{\text{right}}$ is an excellent lift of $t$ and that $\widetilde{ut}_{\text{right}} = \chainmult(u, \widetilde{t}_{\text{right}})$ for $u = 2, \dots , \ell-1$. Let $\widetilde{0}_B$ and $\widetilde{y_1}, \dots, \widetilde{y_r}$ be the lifts of $0_B$ and $y_1, \dots , y_r$ induced by $\widetilde{\widehat{f}}$ and $\widetilde{0}_A, \widetilde{x_1}, \dots , \widetilde{x_r}$ respectively. Then, 
\begin{align*}
& \chainmultadd(u_s, \widetilde{x_s + t}_{\text{right}}, \widetilde{t}_{\text{right}}, \widetilde{x_s}) \\
& =\chainmultadd(u_s, \widetilde{\widehat{f}}((1, i_{t'}, 0) \cdot \widetilde{y_s}), \widetilde{\widehat{f}}((1, i_{t'}, 0) \cdot \widetilde{0}_B), \widetilde{\widehat{f}}(\widetilde{y_s})) \\
& =\widetilde{\widehat{f}}(\chainmultadd(u_s, (1, i_{t'}, 0) \cdot \widetilde{y_s}, (1, i_{t'}, 0) \cdot \widetilde{0}_B, \widetilde{y_s})) \\
& =\widetilde{\widehat{f}}((1, u_si_{t'}, 0) \cdot \chainmultadd(u_s, \widetilde{y_s}, \widetilde{0}_B, \widetilde{y_s})) \\
& =\widetilde{\widehat{f}}((1, u_si_{t'}, 0) \cdot \widetilde{y_s}) \\
& =\widetilde{x_s + u_st}_{\text{right}}.
\end{align*}
\end{proof}

Given the input $\left( \theta_i ^{\Theta_\cL} (0_A) \right)_{i \in K_1(\cL)}$, $\left( \theta_i ^{\Theta_\cL} (t) \right)_{i \in K_1(\cL)}$ and $\left( \theta_i ^{\Theta_\cL} (x) \right)_{i \in K_1(\cL)}$ of the algorithm, when fixing an affine lift $\widetilde{0}_A$ of $0_A$, we can compute an excellent lift $\widetilde{t}$ of $t$ for $\widetilde{0}_A$. This lift will differ from~$\widetilde{t}_{\text{right}}$ by an $\ell$-th root of unity $\zeta_t$, i.e. $\widetilde{t} = \zeta_t \cdot \widetilde{t}_{\text{right}}$. 

\begin{lem}\label{lem:suitlifts}
Let $\widetilde{t}$ (not necessarily excellent) and $\widetilde{x}$ be fixed affine lifts of $t$ and $x$ respectively. Let~$\widetilde{x+t}$ be an affine lift of $x+t$. Then, $\widetilde{x+t}$ is suitable for $\widetilde{t}$ and $\widetilde{x}$ if and only if $\widetilde{x+t}$ is suitable for $\zeta \cdot \widetilde{t}$ and $\widetilde{x}$ for any $\ell$-th root of unity $\zeta$.
\end{lem}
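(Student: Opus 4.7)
My plan is to reduce the equivalence to a clean scaling identity for $\chainmultadd$ in its middle argument, and then to specialize at $n=\ell$. Since the suitability condition for $\widetilde{x+t}$ with respect to $\widetilde{t},\widetilde{x}$ is the single equation $\chainmultadd(\ell, \widetilde{x+t}, \widetilde t, \widetilde x) = \widetilde x$, it will suffice to show that this quantity is unchanged when $\widetilde t$ is replaced by $\zeta\widetilde t$ with $\zeta^\ell = 1$.

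First I would prove, by induction on $n\ge 0$, the identity
\begin{equation*}
\chainmultadd(n, \widetilde{x+t}, \zeta\widetilde t, \widetilde x) \;=\; \zeta^{n(n-1)} \cdot \chainmultadd(n, \widetilde{x+t}, \widetilde t, \widetilde x).
\end{equation*}
The base cases $n=0,1$ are immediate, since the outputs are $\widetilde x$ and $\widetilde{x+t}$ respectively and do not involve the middle input. For the inductive step I would use the recursion $\chainmultadd(n,\widetilde{x+t},\widetilde t,\widetilde x) = \chainadd(\chainmultadd(n{-}1,\cdots),\widetilde t,\chainmultadd(n{-}2,\cdots))$ recalled in the proof of Lemma~\ref{lem:compatibility}, together with the scaling law
\begin{equation*}
\chainadd(\lambda\widetilde u, \mu\widetilde v, \nu\widetilde{u-v}) \;=\; (\lambda^2\mu^2/\nu)\,\chainadd(\widetilde u, \widetilde v, \widetilde{u-v}).
\end{equation*}
The exponents $(2,2,-1)$ are uniquely forced by the identity $\chainmult(m,\lambda\widetilde x) = \lambda^{m^2}\chainmult(m,\widetilde x)$ used in Section~\ref{par:excellent}: substituting into the $\chainmult$ recursion and matching polynomial coefficients in $m$ picks out exactly these values. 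Plugging the inductive hypothesis into the recursion then yields the overall factor $\zeta^{2(n-1)(n-2) + 2 - (n-2)(n-3)}$, which simplifies to $\zeta^{n(n-1)}$.

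To finish, I would specialize at $n=\ell$, obtaining $\zeta^{\ell(\ell-1)} = (\zeta^\ell)^{\ell-1} = 1$. Hence
\begin{equation*}
\chainmultadd(\ell, \widetilde{x+t}, \zeta\widetilde t, \widetilde x) \;=\; \chainmultadd(\ell, \widetilde{x+t}, \widetilde t, \widetilde x),
\end{equation*}
so the suitability condition for $(\widetilde t, \widetilde x)$ coincides with the one for $(\zeta\widetilde t, \widetilde x)$. The main (mild) obstacle will be pinning down the scaling law for chainadd precisely; this is essentially forced by the chainmult scaling already in use, and once it is in hand the closed form $b_n = n(n-1)$ arises directly from the linear recursion $b_n = 2b_{n-1} - b_{n-2} + 2$ with $b_0 = b_1 = 0$, so that the divisibility $\ell \mid b_\ell$ needed to conclude is transparent.
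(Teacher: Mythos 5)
Your proposal is correct and takes essentially the same route as the paper: the key input in both cases is the scaling identity $\chainmultadd(\ell, \widetilde{x+t}, \zeta\widetilde t, \widetilde x) = \zeta^{\ell(\ell-1)}\chainmultadd(\ell, \widetilde{x+t}, \widetilde t, \widetilde x)$, followed by the observation that $\zeta^{\ell(\ell-1)}=1$ when $\zeta^\ell=1$. The difference is that the paper simply cites this identity as Lemma~3.10 of Lubicz--Robert, whereas you re-derive it by induction from the $\chainadd$ scaling law, which is a fine (if redundant) alternative. The one soft spot in your re-derivation is the step where you claim the exponents $(2,2,-1)$ in $\chainadd(\lambda\widetilde u, \mu\widetilde v, \nu\widetilde{u-v}) = (\lambda^2\mu^2/\nu)\chainadd(\widetilde u,\widetilde v,\widetilde{u-v})$ are ``forced by'' the $\chainmult$ scaling: matching coefficients only determines $a,b,c$ once you assume the law has the homogeneous form $\lambda^a\mu^b\nu^c$, which is not a consequence of the $\chainmult$ relation alone but of the explicit Riemann-relation formula defining $\chainadd$ (biquadratic in the first two arguments, degree $-1$ in the third). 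If you want this step to be airtight, cite or derive the $\chainadd$ scaling law directly from that formula rather than by reverse-engineering it; otherwise the argument is complete and the recursion $b_n = 2b_{n-1}-b_{n-2}+2$, $b_0=b_1=0$, giving $b_n=n(n-1)$, is exactly right.
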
 

\begin{proof}
This is a direct consequence of \cite[Lem.3.10]{lubicz-robert:isogenies}, saying that
$$\chainmultadd(\ell, \widetilde{x+t}, \zeta \cdot \widetilde{t}, \widetilde{x}) = \zeta^{\ell(\ell-1)}\cdot \chainmultadd(\ell,\widetilde{x+t}, \widetilde{t}, \widetilde{x}).$$
\end{proof}

Fix arbitrary affine lifts $\widetilde{x_1}, \dots ,\widetilde{x_r}$ of $x_1, \dots , x_r$ (for example by computing $\alpha_1 x, \dots , \alpha_r x$ in Mumford coordinates and then convert to theta coordinates). By Lemma \ref{lem:suitlifts}, computing suitable lifts of $x_1 + t
, \dots , x_r + t$ for $\widetilde{t}_{\text{right}}$ and $\widetilde{x_1}, \dots, \widetilde{x_r}$ is equivalent to computing suitable lifts of $x_1 + t
, \dots , x_r + t$ for $\widetilde{t} \, (=\zeta_t \cdot \widetilde{t}_{\text{right}})$ and $\widetilde{x_1}, \dots, \widetilde{x_r}$. We cannot perform the former computation since we do not know~$\widetilde{t}_{\text{right}}$, but we can perform the latter computation. This means that we can compute suitable lifts $\widetilde{x_1 + t}, \dots , \widetilde{x_r + t}$ of $x_1 + t, \dots , x_r + t$ for $\widetilde{t}_{\text{right}}$ and $\widetilde{x_1}, \dots , \widetilde{x_r}$ respectively, that differ from the \textbf{right} lifts of $x_1 + t, \dots , x_r + t$ by $\ell$-th roots of unity $\zeta_{x_1 + t}, \dots , \zeta_{x_r + t}$. Moreover, if we compute lifts of $x_s + u_st$ as
$$\widetilde{x_s + u_st} := \chainmultadd(u_s, \widetilde{x_s + t}, \widetilde{t}, \widetilde{x_s}),$$
then they differ from the \textbf{right} lifts of $x_s + u_st$ by a factor $\zeta_{x_s + t}^{u_s}$, meaning that
$$
\widetilde{x_s + u_st} = \zeta_{x_s + t}^{u_s} \cdot \widetilde{x_s + u_st}_{\text{right}}.
$$

\paragraph{Choice of lifts of $x_s + u_st$.}\label{par:choiceliftspt}
We use a different approach based on the Chinese Remainder Theorem and the fact that the order $N$ of $x$ is coprime to $\ell$. Suppose that we have fixed a lift~$\widetilde{0}_A$ of $0_A$ and that we have computed a lift $\widetilde{x}$ of $x$ that satisfies $\chainmult(N, \widetilde{x}) = \widetilde{0}_A$. This, via~$\widetilde{\widehat{f}}$, determines lifts $\widetilde{0}_B$ and $\widetilde{y}$ of $0_B$ and $y$,  respectively, satisfying $\chainmult(N,\widetilde{y}) = \widetilde{0}_B$. The lift~$\widetilde{0}_B$ then determines \textbf{right} lifts $\widetilde{t}_{\text{right}}$ and $\widetilde{x+t}_{\text{right}}$. We first compute an excellent lift $\widetilde{t}$ of $t$. We have~$\widetilde{t} = \zeta_t \cdot \widetilde{t}_{\text{right}}$ for an~$\ell$-th root of unity $\zeta_t$. We then compute a suitable lift $\widetilde{x+t}$ of $x+t$ for $\widetilde{t}$ (hence for $\widetilde{t}_{\text{right}}$) and $\widetilde{x}$. We have $\widetilde{x+t} = \zeta_{x+t} \cdot \widetilde{x+t}_{\text{right}}$ for $\zeta_{x+t}$ an $\ell$-th root of unity. Let $\widetilde{y + t'}$ be the lift of $y + t'$ induced by $\widetilde{\widehat{f}}$ and $\widetilde{x+t}$. It is not hard to see that 
\begin{equation}
\widetilde{y + t'} = \zeta_{x+t} \cdot (1, i_{t'}, 0) \cdot \widetilde{y}.
\end{equation}

Suppose $e$ and $f$ satisfy $eN + f\ell = 1$. Consider the isomorphism
$$c_{[\cdot , \cdot]}\colon \Z/N\Z \times \Z/ \ell \Z \to \Z/ N\ell \Z,\, (a,u)\mapsto c_{[a,u]} = ueN + af\ell.$$
For $s = 1, \dots, r$ let $a_s \bmod N$ be the integer such that $x_s = \alpha_s x = a_s x$.

Define lifts of $x_1, \dots , x_r$ as
\begin{equation}\label{def:x_s}
\widetilde{x_s} := \chainmult(c_{[a_s,0]}, \widetilde{x+t}) \text{ for } s=1,\dots , r.
\end{equation}
The lifts $\widetilde{x_1} , \dots , \widetilde{x_r}$ induce lifts $\widetilde{y_1} , \dots , \widetilde{y_r}$ of $y_1, \dots , y_r$ and using the compatibility of the affine isogeny~$\widetilde{\widehat{f}}$ with $\chainmult$, it is not hard to see that
\begin{equation}
\widetilde{y_s} = \chainmult(c_{[a_s, 0]}, \widetilde{y + t'}) \text{ for } s=1, \dots , r.
\end{equation}
The lifts $\widetilde{x_1} , \dots , \widetilde{x_r}$ also induce \textbf{right} lifts
$$\{ \widetilde{x_s + u_st}_{\text{right}} : s=1,\dots, r, \, u_s = 1, \dots , \ell-1 \},$$
which are given by
\begin{equation}\label{eq:rightlift_x_s+u_st}
\widetilde{x_s + u_st}_{\text{right}} = \widetilde{\widehat{f}}((1, u_si_{t'}, 0)\cdot \widetilde{y_s}) = \widetilde{\widehat{f}}((1, u_si_{t'}, 0)\cdot \chainmult(c_{[a_s, 0]}, \widetilde{y + t'})).
\end{equation}
Again, there is no chance we can determine the \textbf{right} lift of $x_s + u_st$. But we can prove the following.

\begin{prop}\label{prop:suitlifts}
Let $\widetilde{0}_A$ and $\widetilde{x}$ be fixed affine lifts of $0_A$ and $x$ respectively. Let $\widetilde{t}$ be a fixed excellent lift of $t$, that differs from the \textbf{right} lift of $t$ by an~$\ell$-th root of unity~$\zeta_t$. Let $\widetilde{x+t}$ be a fixed suitable lift of $x+t$ for $\widetilde{t}$ and $\widetilde{x}$, and suppose $\widetilde{x+t}$ differs from the \textbf{right} lift of $x+t$ by an~$\ell$-th root of unity~$\zeta_{x+t}$. Let $\widetilde{x_1}, \dots , \widetilde{x_r}$ be lifts of $x_1, \dots , x_r$ defined as in \eqref{def:x_s}. For $s=1, \dots, r$ and $u_s = 1, \dots , \ell-1$, define a lift of $x_s + u_st$ as
\begin{equation}
\widetilde{x_s + u_s t} := \chainmult(c_{[a_s, u_s]}, \widetilde{x+t}).
\end{equation}
Then, we have that
$$\widetilde{x_s + u_st} = \zeta_{x+t}^{u_s^2} \cdot \widetilde{x_s + u_st}_{\text{right}},$$
where the \textbf{right} lift of $x_s + u_st$ is as in \eqref{eq:rightlift_x_s+u_st}.
\end{prop}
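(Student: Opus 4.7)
The strategy is to transport the scaling discrepancy $\zeta_{x+t}$ between the suitable lift $\widetilde{x+t}$ and the \textbf{right} lift $\widetilde{x+t}_{\text{right}}$ through $\chainmult$, exploiting the quadratic scaling rule $\chainmult(m, \lambda \cdot \widetilde{z}) = \lambda^{m^2} \cdot \chainmult(m, \widetilde{z})$. The plan is first to establish the auxiliary identity
\begin{equation*}
\widetilde{x_s + u_s t}_{\text{right}} = \chainmult(c_{[a_s, u_s]}, \widetilde{x+t}_{\text{right}}),
\end{equation*}
after which the proposition follows by writing $\widetilde{x+t} = \zeta_{x+t} \cdot \widetilde{x+t}_{\text{right}}$, applying the scaling rule, and observing that $c_{[a_s, u_s]}^2 \equiv u_s^2 \pmod \ell$ (because $c_{[a_s, u_s]} \equiv u_s \pmod \ell$), so that $\zeta_{x+t}^{c_{[a_s, u_s]}^2} = \zeta_{x+t}^{u_s^2}$.

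To prove the auxiliary identity, I would start from $\widetilde{x+t}_{\text{right}} = \widetilde{\widehat{f}}((1, i_{t'}, 0) \cdot \widetilde{y})$, apply $\chainmult(c_{[a_s, u_s]}, \cdot)$, and push the operation inside using the compatibility of the affine isogeny $\widetilde{\widehat{f}}$ with $\chainmult$ (\cite[Cor.~3.17]{lubicz-robert:isogenies}) together with Lemma~\ref{lem:compatibility}. This yields
\begin{equation*}
\chainmult(c_{[a_s, u_s]}, \widetilde{x+t}_{\text{right}}) = \widetilde{\widehat{f}}\bigl((1, c_{[a_s, u_s]}\, i_{t'}, 0) \cdot \chainmult(c_{[a_s, u_s]}, \widetilde{y})\bigr).
\end{equation*}
Since $c_{[a_s, u_s]} \equiv u_s \pmod \ell$ and $\ell\, i_{t'} = 0$, the Heisenberg factor collapses to $(1, u_s\, i_{t'}, 0)$. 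For the inner $\chainmult(c_{[a_s, u_s]}, \widetilde{y})$, the hypothesis $\chainmult(N, \widetilde{x}) = \widetilde{0}_A$ propagates through $\widetilde{\widehat{f}}$ to give $\chainmult(N, \widetilde{y}) = \widetilde{0}_B$, forcing $m \mapsto \chainmult(m, \widetilde{y})$ to be $N$-periodic; combined with $c_{[a_s, u_s]} \equiv c_{[a_s, 0]} \equiv a_s \pmod N$ this gives $\chainmult(c_{[a_s, u_s]}, \widetilde{y}) = \chainmult(c_{[a_s, 0]}, \widetilde{y}) = \widetilde{y_s}$, after which the right-hand side becomes $\widetilde{\widehat{f}}((1, u_s\, i_{t'}, 0) \cdot \widetilde{y_s})$, which is by definition $\widetilde{x_s + u_s t}_{\text{right}}$.

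The main obstacle is justifying the $N$-periodicity claim $\chainmult(c_{[a_s, u_s]}, \widetilde{y}) = \chainmult(c_{[a_s, 0]}, \widetilde{y})$, since the condition $\chainmult(N, \widetilde{y}) = \widetilde{0}_B$ alone does not formally imply periodicity of a second-order recursion; one must also verify the companion initialization condition, equivalently that $\widetilde{y}$ is a canonical lift in the sense of \cite[\S 4.4]{drobert:thesis}. This is parallel to how an excellent lift $\widetilde{t}_e$ yields $\chainmult(\ell, \widetilde{t}_e) = \widetilde{0}_B$ in Proposition~\ref{prop:excellentlift}, and the argument should run along the same lines, combining Lemma~\ref{lem:compatibility} with \cite[Lem.~3.9 and Cor.~3.17]{lubicz-robert:isogenies}.
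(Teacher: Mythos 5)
Your proof takes essentially the same route as the paper's: both apply the compatibility of $\chainmult$ with the affine isogeny $\widetilde{\widehat{f}}$ (from \cite[Cor.~3.17]{lubicz-robert:isogenies}) and with the Heisenberg action (Lemma~\ref{lem:compatibility}), push out the scalar $\zeta_{x+t}$ via the quadratic scaling rule, and invoke the congruences $c_{[a_s,u_s]}\equiv u_s\pmod{\ell}$ and $c_{[a_s,u_s]}\equiv a_s\pmod{N}$. The only difference is organizational: you first establish a scalar-free identity for the \textbf{right} lifts and then scale, whereas the paper carries $\zeta_{x+t}$ through a single chain of equalities after first observing $\widetilde{y_s}=\chainmult(c_{[a_s,0]},\widetilde{y})$; the content is identical. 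Your worry about the step $\chainmult(c_{[a_s,u_s]},\widetilde{y})=\chainmult(c_{[a_s,0]},\widetilde{y})$ is well placed, but it applies equally to the paper: the paper's proof asserts this equality with only the parenthetical ``since $\chainmult(N,\widetilde{y})=\widetilde{0}_B$,'' and you are right that that hypothesis alone does not formally yield $N$-periodicity of a second-order recursion. So this is a gap in the source's exposition rather than a flaw you have introduced. The claim does hold for the lift actually in play ($\widetilde{y}$ is induced by the normalized $\widetilde{x}$ through $\widetilde{\widehat{f}}$, and the Riemann relations then force both $\chainmult(N,\widetilde{y})=\widetilde{0}_B$ and the companion condition $\chainmult(N+1,\widetilde{y})=\widetilde{y}$ that makes the sequence genuinely $N$-periodic), but a short supporting lemma would make the argument airtight; your instinct to route it through Lemma~\ref{lem:compatibility} and the Lubicz--Robert lemmas is exactly right.
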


\begin{proof}
Observe that 
\begin{align*}
\widetilde{y_s} & = \chainmult(c_{[a_s , 0]}, \widetilde{y+t'}) 
 = \chainmult(c_{[a_s,0]}, \zeta_{x+t} \cdot (1, i_{t'} , 0) \cdot \widetilde{y}) \\
& = \chainmult(c_{[a_s,0]}, (1, i_{t'}, 0) \cdot \widetilde{y}) \\
& \hspace{5mm} \text{(by Lemma 3.10 of \cite{lubicz-robert:isogenies} and the fact that $c_{[a_s, 0]}$ is congruent to 0 mod $\ell$)} \\
& = \chainmult(c_{[a_s, 0]}, \widetilde{y}) \text{ by Lemma \ref{lem:compatibility}}
\end{align*}

Now,
\begin{align*}
\widetilde{x_s + u_st} & = \chainmult(c_{[a_s,u_s]}, \widetilde{x+t}) \\
& =\chainmult(c_{[a_s,u_s]}, \widetilde{\widehat{f}}(\widetilde{y+t'})) \\
& =\widetilde{\widehat{f}}(\chainmult(c_{[a_s,u_s]}, \widetilde{y+t'})) \\
& =\widetilde{\widehat{f}}(\chainmult(c_{[a_s,u_s]}, \zeta_{x+t}\cdot (1,i_{t'}, 0) \cdot \widetilde{y})) \\
& =\widetilde{\widehat{f}}(\zeta_{x+t}^{c_{[a_s,u_s]}^2} \cdot \chainmult(c_{[a_s,u_s]}, (1,i_{t'}, 0) \cdot \widetilde{y})) \\
& =\widetilde{\widehat{f}}(\zeta_{x+t}^{u_s^2} \cdot (1, u_si_{t'}, 0) \cdot \chainmult(c_{[a_s,u_s]}, \widetilde{y})) \\
& \hspace{5.5mm} \text{(since $c_{[a_s,u_s]}$ congruent to $u_s \bmod \ell$ and Lemma \ref{lem:compatibility})} \\
& =\zeta_{x+t}^{u_s^2} \cdot \widetilde{\widehat{f}}( (1, u_si_{t'}, 0) \cdot \chainmult(c_{[a_s,0]}, \widetilde{y}))  \\
& \hspace{5.5mm} \text{(since $\chainmult(N,\widetilde{y}) = \widetilde{0}_B$)} \\
& =\zeta_{x+t} ^{u_s^2}\cdot \widetilde{\widehat{f}}( (1, u_si_{t'}, 0) \cdot \widetilde{y_s}) \\
& \hspace{5.5mm} \text{(by the above)} \\
& =\zeta_{x+t}^{u_s^2} \cdot \widetilde{x_s + u_st}_{\text{right}}.
\end{align*}

\end{proof}

\paragraph{Independence of the choice of a suitable lift of $x+t$.}

We prove the case $r=4$. The case $r=2$ is easier and can be proven in a similar way. 

Fix $\mathbf{k} \in K_1(\cM^{\star 4})$ and consider the sum
\begin{equation}\label{eq:sum2}
\sum_{\substack{\mathbf{t}'' \in K_1((\cM^\beta)^{\star 4})[\beta] \\ F(\mathbf{t}'') = 0}}
\prod_{s=1}^4 \theta^{\Theta_{\cM^\beta}}_{j_s+t''_s} (y_s),
\end{equation}
where $\mathbf{j} \in K_1((\cM^{\beta})^{\star 4})[n]$ is the unique index such that $F(\mathbf{j}) = \mathbf{k}$. Let $t' \in K_1(\cM^\beta)[\beta]$ be the unique preimage of $t$ under $\widehat{f}$.

Fix a lift $\widetilde{0}_A$ of $0_A$ and fix a lift $\widetilde{x}$ of $x$ that satisfies $\chainmult(N, \widetilde{x}) = \widetilde{0}_A$, where $N$ is the order of $x$ which is coprime to $\ell$. Compute an excellent lift $\widetilde{t}$ of $t$, that differs form $\widetilde{t}_{\text{right}}$ by an $\ell$-th root of unity $\zeta_t$, and compute a suitable lift $\widetilde{x+t}$ of $x+t$ for $\widetilde{t}$ (hence for $\widetilde{t}_{\text{right}}$) and $\widetilde{x}$, that differs from~$\widetilde{x+t}_{\text{right}}$ by an $\ell$-th root of unity $\zeta_{x+t}$. For $s=1, \dots , 4 $ and $u_s = 0, \dots , \ell-1$, define a lift of~$x_s + u_s t$ as
\begin{equation}\label{eq:ourlifts}
\widetilde{x_s + u_st} := \chainmult(c_{[a_s, u_s]}, \widetilde{x+t}).
\end{equation}
Proposition \ref{prop:suitlifts} tells us that for $s=1, \dots , 4$ and $u_s = 1, \dots , \ell-1$, the lift $\widetilde{x_s + u_st}$ differs from the \textbf{right} lift of $x_s + u_st$ by $\zeta_{x+t}^{u_s^2}$, i.e., 
$$\widetilde{x_s + u_st} = \zeta_{x+t}^{u_s^2} \cdot \widetilde{x_s + u_st}_{\text{right}}.$$
Here, \textbf{right} lift of $x_s + u_st$ means with respect to the lift $\widetilde{x_s} = \chainmult(c_{[a_s, 0]}, \widetilde{x+t})$ of $x_s$.
We want to evaluate \eqref{eq:sum2} by making the substitution
$$\theta^{\Theta_{\cM^\beta}}_{j_s+t''_s} (y_s) = \theta^{\Theta_\cL}_{\widehat{f}(j_s)}(\widetilde{x_s + u_st}),$$
where $u_s$ is such that $t_s'' = u_s t'$. If we knew the \textbf{right} lifts
$$\{\widetilde{x_s + u_st}_{\text{right}} : s=1, \dots , 4 \text{ and } u_s = 0, \dots, \ell-1\},$$
where $\widetilde{x_s + 0t}_{\text{right}}$ is just $\widetilde{x_s}$, then, by making the substitutions 
$$\theta^{\Theta_{\cM^\beta}}_{j_s+t''_s} (y_s) = \theta^{\Theta_\cL}_{\widehat{f}(j_s)}(\widetilde{x_s + u_st}_{\text{right}}),$$
we could compute \eqref{eq:sum2} correctly.

Proceeding as in Section~\ref{par:indep_of_excellent_lift}, we compute
$$\sum_{\substack{\mathbf{t}'' \in K_1((\cM^\beta)^{\star 4})[\beta] \\ F(\mathbf{t}'') = 0}}
\prod_{s=1}^4 \theta^{\Theta_{\cM^\beta}}_{j_s+t''_s} (y_s)$$
by computing
\begin{equation}\label{eq:y_isogF}
\displaystyle\sum_{t_1,t_2 \in G}\theta^{\Theta_{\cL}}_{i_1} (\widetilde{x_1+ \alpha_1 t_1 -\alpha_2 t_2})\theta^{\Theta_{\cL}}_{i_2} (\widetilde{x_2 + \alpha_2 t_1 +\alpha_1 t_2}) 
\theta^{\Theta_{\cL}}_{i_3} (\widetilde{x_3 + \alpha_3 t_1 -\alpha_4 t_2})  \theta^{\Theta_{\cL}}_{i_4} (\widetilde{x_4 + \alpha_4 t_1 +\alpha_3 t_2}),
\end {equation}
where $i_s=\widehat f(j_s)\in K_1(\cL)$. For $s=1,\dots, 4$, let $a_{s, t} \bmod \ell$ be the integer such that $\alpha_s t = a_{s,t}t$. Writing $t_1 = u_{1,t}t$ and $t_2 = u_{2,t}t$, with $0 \le u_{1,t} , u_{2,t} \le \ell-1$, we define integers 
\begin{itemize}
\item $u_1 = a_{1,t} u_{1,t}  - a_{2,t} u_{2,t}  \bmod \ell$, $u_2 = a_{2,t}u_{1,t} + a_{1,t}u_{2,t} \bmod \ell$,
\item $u_3 = a_{3,t}u_{1,t} - a_{4,t}u_{2,t} \bmod \ell$, $u_4 = a_{4,t}u_{1,t} - a_{3,t}u_{2,t} \bmod \ell$.
\end{itemize}
Then, we are reduced to computing 
\begin{equation}\label{eq:y_isogF2}
\displaystyle\sum_{0 \le u_{1,t}, u_{2,t} \le \ell-1}\theta^{\Theta_{\cL}}_{i_1} (\widetilde{x_1 + u_1t})\theta^{\Theta_{\cL}}_{i_2} (\widetilde{x_2 + u_2t}) 
\theta^{\Theta_{\cL}}_{i_3} (\widetilde{x_3 + u_3t})  \theta^{\Theta_{\cL}}_{i_4} (\widetilde{x_4 + u_4t}).
\end {equation}

\begin{lem}
Fix $u_{1,t}, u_{2,t} \in \{0, \dots , \ell-1 \}$. Then we have equality between the terms
$$\theta^{\Theta_{\cL}}_{i_1} (\widetilde{x_1 + u_1t})\theta^{\Theta_{\cL}}_{i_2} (\widetilde{x_2 + u_2t}) 
\theta^{\Theta_{\cL}}_{i_3} (\widetilde{x_3 + u_3t})  \theta^{\Theta_{\cL}}_{i_4} (\widetilde{x_4 + u_4t}) \text{ and }$$
$$\theta^{\Theta_{\cL}}_{i_1} (\widetilde{x_1 + u_1t}_{\text{right}})\theta^{\Theta_{\cL}}_{i_2} (\widetilde{x_2 + u_2t}_{\text{right}}) 
\theta^{\Theta_{\cL}}_{i_3} (\widetilde{x_3 + u_3t}_{\text{right}})  \theta^{\Theta_{\cL}}_{i_4} (\widetilde{x_4 + u_4t}_{\text{right}}).$$
That is to say, evaluating \eqref{eq:sum2} by substituting the lifts $\{ \widetilde{x_s + u_st} : s = 1, \dots , 4 \text{ and } u_s = 0, \dots , \ell-1\}$ as defined in \eqref{eq:ourlifts}, we correctly compute an affine lift of $(y, 0,\dots, 0) \in B^4(k)$ with respect to $\widetilde{\Theta}_{\cM^{\star 4}}$.
\end{lem}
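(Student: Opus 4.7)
The plan is to mimic closely the strategy of Lemma~\ref{lem:lambda}: by Proposition~\ref{prop:suitlifts}, for each $s \in \{1,\dots,4\}$ and each $u_s \in \{1,\dots,\ell-1\}$ the lift $\widetilde{x_s+u_st}$ defined in \eqref{eq:ourlifts} differs from $\widetilde{x_s+u_st}_{\text{right}}$ by the scalar $\zeta_{x+t}^{u_s^2}$, and for $u_s = 0$ the two lifts coincide by construction (both equal $\widetilde{x_s}=\chainmult(c_{[a_s,0]},\widetilde{x+t})$). Since the theta coordinates are linear in each affine lift, the ratio between the two products is exactly
\[
\zeta_{x+t}^{u_1^2+u_2^2+u_3^2+u_4^2}.
\]
So it suffices to show that $u_1^2+u_2^2+u_3^2+u_4^2 \equiv 0 \pmod{\ell}$, for then $\zeta_{x+t}^\ell=1$ forces the two sides to agree term by term.

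The key algebraic identity is the Euler four-square identity applied to the quaternionic matrix $F^t$. With the definitions
\begin{align*}
u_1 &= a_{1,t}u_{1,t}-a_{2,t}u_{2,t}, & u_2 &= a_{2,t}u_{1,t}+a_{1,t}u_{2,t}, \\
u_3 &= a_{3,t}u_{1,t}-a_{4,t}u_{2,t}, & u_4 &= a_{4,t}u_{1,t}+a_{3,t}u_{2,t},
\end{align*}
a direct expansion (exactly as in Lemma~\ref{lem:lambda}) gives
\[
u_1^2+u_2^2+u_3^2+u_4^2 = (a_{1,t}^2+a_{2,t}^2+a_{3,t}^2+a_{4,t}^2)(u_{1,t}^2+u_{2,t}^2).
\]
Now $a_{1,t}^2+\cdots+a_{4,t}^2$ is, modulo $\ell$, the scalar by which $\beta=\alpha_1^2+\cdots+\alpha_4^2$ acts on $t$; since $t\in G\subset A[\beta]$, this scalar is $\equiv 0 \pmod{\ell}$, and the desired congruence follows.

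The main point to check carefully is bookkeeping: namely that the rewriting of the sum~\eqref{eq:sum2} as~\eqref{eq:y_isogF2} (via the description $\ker(F)\cap G^4 = \{F^t(t_1,t_2,0,0)\}$ from \eqref{eq:kerFcapG4} and the substitution $\theta^{\Theta_{\cM^\beta}}_{j_s+u_st'}(y_s)=\theta^{\Theta_{\cL}}_{\widehat{f}(j_s)}(\widetilde{x_s+u_s t})$ coming from the affine version of the isogeny theorem for $\widehat{f}$) uses exactly the lifts defined in \eqref{eq:ourlifts} with the same indices $u_s$ appearing in the formula for $u_1^2+\cdots+u_4^2$. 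Once this matching is verified, the above computation completes the proof that the computed affine point equals $\lambda \cdot (\theta^{\widetilde{\Theta}_{\cM^{\star 4}}}_{\mathbf k}(y,0,0,0))_{\mathbf k}$ for a single common projective scalar $\lambda$, and hence gives a valid affine lift of $(y,0,0,0)\in B^4(k)$ with respect to $\widetilde{\Theta}_{\cM^{\star 4}}$.
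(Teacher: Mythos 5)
Your proposal is correct and follows essentially the same approach as the paper: invoke Proposition~\ref{prop:suitlifts} to get the per-factor discrepancy $\zeta_{x+t}^{u_s^2}$, multiply the four together, then use the Euler four-square identity to reduce to $(a_{1,t}^2+\cdots+a_{4,t}^2)(u_{1,t}^2+u_{2,t}^2)\equiv 0\pmod\ell$ because $\beta$ kills $t$. A small remark: you correctly use $u_4=a_{4,t}u_{1,t}+a_{3,t}u_{2,t}$ (consistent with the expansion of $F^t(t_1,t_2,0,0)$ and with \eqref{eq:y_isogF}), whereas the displayed definition of $u_4$ just before the lemma carries a sign typo ($-a_{3,t}u_{2,t}$); your sign is the one that makes the four-square identity work.
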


\begin{proof}
By Proposition \ref{prop:suitlifts}, the term 
$$\theta^{\Theta_{\cL}}_{i_1} (\widetilde{x_1 + u_1t})\theta^{\Theta_{\cL}}_{i_2} (\widetilde{x_2 + u_2t}) 
\theta^{\Theta_{\cL}}_{i_3} (\widetilde{x_3 + u_3t})  \theta^{\Theta_{\cL}}_{i_4} (\widetilde{x_4 + u_4t})$$
differs from 
$$\theta^{\Theta_{\cL}}_{i_1} (\widetilde{x_1 + u_1t}_{\text{right}})\theta^{\Theta_{\cL}}_{i_2} (\widetilde{x_2 + u_2t}_{\text{right}}) 
\theta^{\Theta_{\cL}}_{i_3} (\widetilde{x_3 + u_3t}_{\text{right}})  \theta^{\Theta_{\cL}}_{i_4} (\widetilde{x_4 + u_4t}_{\text{right}})$$
by $\zeta_{x+t}^{u_1^2 + \cdots + u_4^2}$. But
$$u_1^2 + \cdots + u_4^2 = (a_{1,t}^2 + \cdots + a_{4,t}^2)(u_{1,t}^2 + u_{2,t}^2)$$
which is a multiple of $\ell$, since $a_{1,t}^2 + \cdots + a_{4,t}^2$ is given by the action of $\beta$ on $t$.
\end{proof}

\subsection{Theta coordinates for $(B, \cM, \Theta_\cM)$}
In Section \ref{sebseq:thetacoord_B^r} we computed the theta point $\left(\theta^{\widetilde{\Theta}_{\cM^{\star r}}}_{\mathbf{k}}(f(x),0,\dots,0)\right)_{\mathbf{k} \in K_1(\cM^{\star r})}$ for $x\in A(k)$. Analogous to Section \ref{subsec:metaplect} this does not allow us to recover the theta coordinates of $f(x)$ for $(B, \cM)$. The same metaplectic automorphism from Section \ref{par:unfolding} turns $\widetilde{\Theta}_{\cM^{\star r}}$ into a product theta structure and the transformation law for the theta coordinates from Proposition \ref{prop:transform} applies.
Let $S = \left( \begin{array}{cc} A & B \\C & D \end{array}\right)\in \Sp_{2gr}(\Z)$ be as in Proposition \ref{prop:transform} and let $\mathbf{e}' = \frac{1}{2} \diag ( A^t C)$ and $ \mathbf{e''} = \frac{1}{2} \diag( D^t B)$.
Then there exists a constant $\lambda \in \bar{k}^\times$ such that for all $\mathbf{b'} = (b_1',\dots, b_r')\in Rpr  \left ( \frac{1}{2}\Z^g/\Z^g \right )^r$ we have

\begin{eqnarray}
& \theta^{\Theta_{\cM}}_{b_1'}(f(x)) \cdot \theta^{\Theta_{\cM^{\star (r-1)}}}_{(b_2', \dots, b_r')}(0_{B^{r-1}}) = \theta^{\Theta_{\cM^{\star r}}}_{\mathbf{b'}}((f(x), 0, \dots , 0)) \\ \nonumber
&= \frac{\lambda}{2^{2r}}\sum_{\mathbf{a} \in \left ( \frac{1}{2}\Z^g / \Z^g \right )^r} \xi^2_{\mathbf{a},\mathbf{b}}\sum_{\mathbf{i}\in \left ( \frac{1}{2}\Z^g/\Z^g \right)^r}
\exp(4\pi i\mathbf{a}^t\mathbf{i})\theta^{\widetilde \Theta_{\cM^{\star r}}}_{\mathbf{b}+\mathbf{i}}((f(x), 0, \dots, 0))\theta^{\widetilde \Theta_{\cM^{\star r}}}_{\mathbf{i}}(0_{B^r}),
\end{eqnarray}

where $\mathbf{a},\mathbf{b},\mathbf{a'},\mathbf{b'}\in Rpr  \left ( \frac{1}{2}\Z^g/\Z^g \right )^r$ are related by $\scalefont{0.85}{\begin{pmatrix}
\mathbf{a'}\\
\mathbf{b'}
\end{pmatrix}=\begin{pmatrix} D & -C\\ -B & A \end{pmatrix}\cdot 
\begin{pmatrix}
\mathbf{a}-\mathbf{e'}\\
\mathbf{b}-
\mathbf{e''}
\end{pmatrix}}$, and 
$$
\xi_{\mathbf{a}, \mathbf{b}}=\exp \left(-\pi i (\mathbf{a}^t A B^t \mathbf{a}+\mathbf{b}^t C D^t \mathbf{b})-2\pi i(A^t \mathbf{a}+C^t \mathbf{b}+\mathbf{e'})^t\mathbf{e''}-2\pi i\mathbf{a}^t B C^t\mathbf{b}\right).
$$ 

We finally compute the projective point 
$$(\theta^{\Theta_\cM}_k ( f(x)))_{k \in K_1(\cM)}.$$

%
%
\section{Complexity Analysis}\label{sec:complexity}
The algorithms from Sections~\ref{sec:computing_theta_null_point} and \ref{sec:isogpts} depend on the following parameters: 
\begin{itemize}
\item the size $q$ of the finite field $k = \F_q$, 

\item the order $\ell$ of the kernel of the isogeny,  

\item the level $n$ of the theta functions that we use in the computation,  

\item the dimension $g$ of the abelian variety,  

\item the parameter $r$ (typically, $r$ is either 2 or 4), 

\item the order $N$ of the point $x \in A(\F_q)$ (for the algorithm of Section~\ref{sec:isogpts}).  
\end{itemize}
For the moment, we assume that the matrix $F$ is precomputed and that the (affine) theta null point $(\theta_i^{\Theta_{\cL}} (\widetilde{0}_A))_{i \in K_1(\cL)}$ as well as the (affine) theta points $(\theta^{\Theta_{\cL}}_{i}(\widetilde t))_{i \in K_1(\cL)}$ and $(\theta^{\Theta_{\cL}}_{i}(\widetilde x))_{i \in K_1(\cL)}$ are provided as input to the algorithm. We fix the following notations:
\begin{itemize}
\item $k_0$ denotes the field of definition of the affine theta coordinates of $0_A$,
\item $k_t$ denotes the field of definition of the affine theta coordinates of $t$,
\item $k_x$ denotes the field of definition of the affine theta coordinates of $x$,
\item $k_{x+t}$ denotes the field of definition of the affine theta coordinates of $x+t$.
\end{itemize}
It will follow from Section \ref{par:normal_addition} that $k_{x+t}$ is the composite field of $k_x$ and $k_t$.

In the case where $A$ is the Jacobian variety of a hyperelliptic curve $H$ defined over $\F_q$, we can explicitly determine the fields $k_0, k_t$ and $k_x$. Suppose that the Weierstrass points of $H$ have coordinates in $\F_{q^d}$. Then $k_0$ equals $\F_{q^{2d}}$. In general, if $z \in A(\F_{q^{d'}})$ is any point on the Jacobian of~$H$, then the theta coordinates of $z$ will be defined over the composite field of $\F_{q^{2d}}$ and $\F_{q^{d'}}$. Since~$x$ is $\F_q$-rational we have that $k_x = k_0$, and if $d'$ is the smallest integer such that $A[\ell] \subset A(\F_{q^{d'}})$, then $k_t$ is the composite field of $\F_{q^{2d}}$ and $\F_{q^{d'}}$.

\subsection{Computing the theta null point for $B$}
We first analyse the complexity of the algorithm from Section~\ref{sec:computing_theta_null_point}.  

\paragraph{Compute an excellent affine lift for $t \in G$.}\label{par:complexity-excellent}
To compute an excellent affine lift of $t$, we follow the procedure in Section~\ref{par:excellent}, that is, we take any affine lift $\widetilde{t}$ and compute $\chainmult(m+1, \widetilde{t}, \widetilde{0}_A)$ as well as 
$-\chainmult(m, \widetilde{t}, \widetilde{0}_A)$ and then solve an equation $\lambda_t^\ell = c$ to determine $\lambda_t$.

As discussed in \cite[p.1494]{lubicz-robert:isogenies}, a multiplication chain requires $O(\log m)$ chain additions (in the worst case, $3 \log m$). The complexity of each chain addition has been analyzed in 
\cite[\S 3.2]{lubicz-robert:isogenies} and has complexity 
$$
4 (4\ell n)^g \textbf{M}(k_t) + (4\ell n)^g \mathbf{A}(k_t) + (\ell n)^g \mathbf{D}(k_t)
$$ 
where, $\textbf{M}(k_t)$, $\textbf{A}(k_t)$ and $\textbf{D}(k_t)$ are the costs of multiplication, addition and division in the field $k_t$ respectively. 

Once we have obtained $\widetilde{t}_e$, we need to compute the excellent lifts 
$\widetilde t_e, \widetilde {2t}_e, \dots, \widetilde{(m+1) t}_e$. These then automatically yield lifts for $\widetilde{(m + 1)t_e}, \dots, \widetilde{ (\ell-1) t_e }$ (so we do not need to compute those) and hence, for all the points in $G$. In the case where $A$ is a hyperelliptic Jacobian, we have $A[\ell] \subset A(k_t)$, and since the $\ell$-th roots of unity $\mathbf{\mu}_\ell$ form a subgroup of  $k_t^\times$, the excellent lift $\widetilde{t}_e$ has affine coordinates in $k_t$.

\paragraph{Evaluating the right-hand side of equation \eqref{eq:0_r=4}.} 
For a given $\mathbf{k} \in K_1(\cM^{\star r})$, computing the right-hand side of \eqref{eq:0_r=4} requires $\ell^{r/2}$ times $(r-1)$ multiplications and one addition in the field $k_t$ as the theta coordinates of the suitable lifts of the points in the kernel have been computed in the previous step. There are $n^{gr}$ indices $\mathbf k$ for which we need to do this computation, thus leading to $n^{gr} \ell^{r/2} (r-1)$ multiplications and $n^{gr} \ell^{r/2}$ additions in 
$k_t$ with a total cost of 
$$
n^{gr} \ell^{r/2} ( (r-1) \textbf{M}(k_t) + \textbf{A}(k_t) ).
$$ 

\paragraph{Computing the symplectic transformation $\overline{S}$.}
The complexity depends only on $n$, $g$ and $r$. In practice, this can be speeded up if one finds a faster method for a symplectic transformation of a $2gr$-by-$2gr$ matrix with entries in $\Z / 2n\Z$ into a block-diagonal form (we only need to have a $2g$-by-$2g$ block that will then correspond to the single copy $(B, \cM, \Theta_{\cM})$ in the product 
$(B\times B^{r-1}, \cM \star \cM^{\star (r-1)}, \Theta_{\cM \star \cM^{\star (r-1)}})$). The brute-force method presented in Section~\ref{par:unfolding} requires testing $(2n)^{4g^2}$ matrices. 

\paragraph{Applying the transformation formula.}
The main cost is given by the number of multiplications and additions needed to compute the right-hand side of \eqref{eq:isS_gen}. 
For each element in $K_1(\cM^{\star r})$, one needs $\# K_1(\cM^{\star r}) \cdot \#K_1(\cM^{\star r}) = n^{2gr}$ multiplications and $n^{2gr}$ additions in the field $k_t$. Thus, the total cost of the coordinate transformation  is $n^{3gr}( \mathbf{M}(k_t) + \mathbf{A}(k_t))$.

\subsection{Computing the theta coordinates for $f(x)$}
For a point $x \in A(\F_q)$, suppose that we want to compute the coordinates of $f(x)$ with respect to the theta structure $\Theta_{\cM}$ on $(B, \cM)$, assuming that we are given the theta coordinates of $x$ for the theta structure $\Theta_{\cL}$ on $(A, \cL)$.

\paragraph{Computing a lift of $x + t$ using normal additions.}\label{par:normal_addition} Recall that the type of $\cL$ is $\delta = (n, \dots, n)$. Given affine lifts $(\theta^{\Theta_{\cL}}_{i}(\widetilde t))_{i \in K_1(\cL)}$ and $(\theta^{\Theta_{\cL}}_{i}(\widetilde x))_{i \in K_1(\cL)}$ of $t$ and $x$ respectively, we explain how to compute (affine) theta coordinates for $x + t$. 
We do that by using normal additions. That is, we compute for each $i \in K_1(\cL)$ the product 
$\theta_i^{\Theta_{\cL}}(\widetilde{x + t}) \theta_0^{\Theta_{\cL}}(\widetilde{x - t})$ as in \cite[p.81]{drobert:thesis}: if we write 
$i = 2 u$ where $u \in \Z(2\delta)$ then 
\begin{equation}\label{eq:normaladd}
\theta_{i}^{\Theta_{\cL}}(\widetilde{x + t}) \theta_{0}^{\Theta_{\cL}}(\widetilde{x - t}) = \theta_{u + u}^{\Theta_{\cL}}(\widetilde{x + t}) \theta_{u - u}^{\Theta_{\cL}}(\widetilde{x - t}) = \frac{1}{2^g} 
\sum_{\chi \in \widehat{\Z}(\delta)} U_{\chi, u}^{\cL^2}(\widetilde{x}) U_{\chi, u}^{\cL^2}(\widetilde{t}), 
\end{equation}
where 
\begin{equation}\label{eq:fourier}
U_{\chi, u}^{\cL^2}(\widetilde{x}) U_{\chi, u}^{\cL^2}(\widetilde{t}) = \frac{1}{U^{\cL^2}_{\chi, u}(\widetilde{0}_A)^2}
\left ( \sum_{s \in \Z(\delta)} 
\chi(s) \theta_{i + s}^{\Theta_{\cL}}(\widetilde{x}) \theta_{s}^{\Theta_{\cL}}(\widetilde{x}) \right ) \left ( \sum_{s \in \Z(\delta)} \chi(s) \theta_{i + s}^{\Theta_{\cL}}(\widetilde{t}) \theta_{s}^{\Theta_{\cL}}(\widetilde{t}) \right ).  
\end{equation}
Here, $\displaystyle U^{\cL^2}_{\chi, u}(\widetilde{0}_A)^2 = \sum_{s \in \Z(\delta)} \chi(s) \theta^{\Theta_{\cL}}_{i + s}(\widetilde{0}_A) \theta^{\Theta_{\cL}}_{s}(\widetilde{0}_A)$. 

\begin{rem}
As explained in \cite[p.81]{drobert:thesis}, one has to be a bit careful with the required non-vanishing of the denominator on the right-hand side of \eqref{eq:fourier}. Yet, according to \cite[Thm.4.4.4]{drobert:thesis}, there will always be $u' \in \Z(2\delta)$ with $2 u' = i$ for which the denominator will not vanish.  
\end{rem}

\noindent Assuming that $\theta_{0}^{\Theta_{\cL}}(\widetilde{x - t}) \ne 0$, we obtain projective theta coordinates for 
$x + t$.
For fixed $i \in \Z(\delta)$ and fixed $\chi \in \widehat{\Z}(\delta)$, the computation of  $U_{\chi, u}^{\cL^2}(\widetilde{x}) U_{\chi, u}^{\cL^2}(\widetilde{t})$ has a cost of $2n^g(\mathbf{M}(k_{x+t}) + \mathbf{A}(k_{x+t})) + \mathbf{M}(k_{x+t}) + \mathbf{D}(k_{x+t})$, and hence the computation of  $\theta_{i}^{\Theta_{\cL}}(\widetilde{x + t})$ has a cost of $n^g(2n^g(\mathbf{M}(k_{x+t}) + \mathbf{A}(k_{x+t})) + \mathbf{M}(k_{x+t}) + \mathbf{D}(k_{x+t}) + \mathbf{A}(k_{x+t}))$. Finally, we compute the projective theta coordinates of~$x+t$ at a cost of 
$$n^{2g}(2n^g(\mathbf{M}(k_{x+t}) + \mathbf{A}(k_{x+t})) + \mathbf{M}(k_{x+t}) + \mathbf{D}(k_{x+t}) + \mathbf{A}(k_{x+t})).$$

\paragraph{Computing affine lifts from Section~\ref{par:choiceliftspt}.} Suppose we are given an excellent lift $(\theta^{\Theta_{\cL}}_{i}(\widetilde t))_{i \in K_1(\cL)}$ of $t$ from the computation of the theta null point of $B$. Let $\widetilde{x + t}$ be an arbitrary affine lift of $x+t$ computed using the normal additions in the previous section. Compute a lift $\widetilde{x}$ of $x$ satisfying $\chainmult(N, \widetilde{x}) = \widetilde{0}_A$. This requires $\log N$ chain additions in the field $k_x$. As before, the cost is 
$$
3 \log N \left ( 4(4\ell n)^g \textbf{M}(k_x) + (4\ell n)^g \mathbf{A}(k_x) + (\ell n)^g \mathbf{D}(k_x) \right). 
$$
Next, compute a suitable lift of $x + t$ for $\widetilde{t}$ and $\widetilde{x}$ (which we need to compute out of the arbitrary affine lift of $x + t$). This requires a multiplication chain involving $\log \ell$ addition chains each with complexity $4 (4\ell n)^g \textbf{M}(k_{x+t}) + (4\ell n)^g \mathbf{A}(k_{x+t}) + (\ell n)^g \mathbf{D}(k_{x+t})$, hence a total number of 
$$
3 \log \ell \left ( 4(4\ell n)^g \textbf{M}(k_{x+t}) + (4\ell n)^g \mathbf{A}(k_{x+t}) + (\ell n)^g \mathbf{D}(k_{x+t}) \right)
$$
operations in the field $k_{x+t}$. Finally, the cost of computing each of the affine lifts $\widetilde{x_s + u_s t}$ defined in Proposition~\ref{prop:suitlifts} is at most 
$$
3 \log (N\ell) \left (4 (4\ell n)^g \textbf{M}(k_{x+t}) + (4\ell n)^g \mathbf{A}(k_{x+t}) + (\ell n)^g \mathbf{D}(k_{x+t}) \right),   
$$
since the scalar $c_{[a_s, u_s]}$ is at most $N \ell$.

\paragraph{Computing the right-hand side of \eqref{eq:y_isogF2}.}
To evaluate the sum in \eqref{eq:y_isogF2}, we need to loop over all indices $\mathbf{k} \in K_1(\cM^{\star r})$ (a total of $n^{gr}$ indices) and for 
each index, evaluate each of the $\ell^{r / 2}$ summands. Each summand then requires $(r-1)$ multiplications in the field $k_{x+t}$ as well as one addition, a total cost of 
$$
n^{gr} \ell^{r/2} ( (r-1) \mathbf{M}(k_{x+t}) + \mathbf{A}(k_{x+t})) 
$$
operations.
 
\paragraph{Applying the transformation formula.}
As for the computation of the theta null point for $B$, this step requires $n^{3gr}$ multiplications and additions in the field $k_{x+t}$, i.e. has a total cost of 
$$n^{3gr}(\mathbf{M}(k_{x+t}) + \mathbf{A}(k_{x+t}))$$
operations.

%
%
\section{Computational Examples}\label{sec:examples}

We have implemented the algorithm from Section \ref{sec:computing_theta_null_point} in Magma and used that implementation to compute the following example of isogenous abelian surfaces. Consider the curve 
$$H : y^2=x^5 + x^4 + 3x^3+22x^2 + 19x$$
over $\F_{23}$ and its Jacobian $J = \Jac(H)$. Then $J$ is ordinary and simple and the (irreducible) characteristic polynomial of the Frobenius endomorphism $\pi$ is given by $\chi_\pi(z) = z^4 + 14z^2 + 529$. The endomorphism algebra $\End^0(J) = \End(J) \otimes_\Z \Q$ is isomorphic to the quartic CM-field $K = \Q(\pi) = \Q[z] / (\chi_\pi)$, and the totally real subfield $K_0 \subset K$, consisting of the Rosati-stable elements of $\End^0(J)$, is generated by $\pi + \pi^\dagger$ over $\Q$. We have $\Z[\pi + \pi^\dagger]\subset \End(J)^+ \subset \O_{K_0}$.
The real endomorphism $\beta = -38(\pi + \pi^\dagger) + 215$ is totally positive and of real norm 17 (i.e. of degree $17^2$). Consider the 17-torsion point $t = (x^2 + u_1x + u_0, v_1x+v_0) \in J(\F_{23^{16}})$, where
\begin{scriptsize}
\begin{align*}
&u_1 = 10a^{15} + 9a^{14} + 17a^{13} + 5a^{12} + 14a^{11} + 19a^{10} + 14a^9 + 14a^8 + 5a^7 + 22a^6 + a^5 + 19a^4 + 13a^3 + 2a^2 + 15a + 7,\\
&u_0 = 6a^{15} + 11a^{14} + 17a^{13} + 19a^{12} + 10a^{11} + a^{10} + 21a^9 + 15a^8 + 18a^7 + 21a^6 + 5a^5 + 18a^4 + 4a^3 + 6a^2 + 3a + 19,\\
&v_1 = 19a^{15} + 11a^{14} + 18a^{13} + 3a^{12} + 20a^{11} + 11a^{10} + 8a^9 + a^8 + 19a^7 + 5a^6 + 14a^5 + 3a^4 + 4a^3 + 10a^2 + 22a + 22,\\
&v_0 = a^{15} + 10a^{14} + 11a^{13} + 22a^{12} + 3a^{11} + 14a^{10} + 21a^9 + 5a^8 + 9a^7 + 17a^5 + 20a^4 + 6a^3 + 8a^2 + 13a + 5
\end{align*}
\end{scriptsize}
and $a$ satisfies $a^{16} + 19a^7 + 19a^6 + 16a^5 + 13a^4 + a^3 + 14a^2 + 17a +5 = 0$. The subgroup $G = \langle t \rangle$ is Galois-stable, since $\pi(t) = [6] t$, and moreover we have $G \subset \ker(\beta)$.

The algorithm computes the hyperelliptic curve $H'$ over $\F_{23}$ with affine model
$$H' : y^2 = 5x^6 + 18x^5 + 18x^4 + 8x^3 + 20x,$$
whose Jacobian variety $J' = \Jac(H')$ is isomorphic (as a principally polarized abelian surface) to the quotient $J/G$. 
Hence $J$ and $J'$ are $\beta$-isogenous over $\F_{23}$ and the isogeny is given by
$$J \ra J / G = J'.$$
Indeed, the characteristic polynomial of the Frobenius endomorphism $\pi' \colon J' \ra J'$ equals $\chi_\pi$, but $H$ and $H'$ have Cardona-Quer-Nart-Pujola invariants (c.f. \cite{cardona-nart-pujolas} and \cite{cardona-quer}) given by $[16,12,17]$ and $[18,5,0]$ respectively, and hence the Jacobians $J$ and $J'$ are non isomorphic (as principally polarized abelian surfaces). The computation took 363.2 seconds on a 2.3 GHz Intel Core i7 CPU with 8 GB memory. 

\begin{acknowledgements}
We are grateful to Ernest H. Brooks and Enea Milio for carefully reading the draft of this paper and for suggesting various improvements. We thank Gaetan Bisson, Pierrick Gaudry, Arjen Lenstra, Chloe Martindale, Philippe Michel, Chris Skinner, Ben Smith, Marco Streng, Nike Vatsal, Ben Wesolowski and Alexey Zykin for helpful discussions.  
\end{acknowledgements}

\bibliographystyle{amsalpha}
\bibliography{bib,biblio-math,biblio-crypto}

\end{document}